\newtheorem{theorem}{Theorem}
\newtheorem{corollary}[theorem]{Corollary}
\newtheorem{lemma}[theorem]{Lemma}
\newtheorem{proposition}[theorem]{Proposition}
\newtheorem{question}[theorem]{Question}
\newtheorem{conjecture}[theorem]{Conjecture}
\newtheorem{observation}[theorem]{Observation}
\theoremstyle{definition}
\newcommand{\diam}{\operatorname{diam}}
\def\blfootnote{\gdef\@thefnmark{}\@footnotetext}
\begin{document}

\title{Covering complete graphs by monochromatically bounded sets}
\date{}
\author{Luka Mili\'{c}evi\'{c}}
\maketitle

\abstract{Given a $k$-colouring of the edges of the complete graph $K_n$, are there $k-1$ monochromatic components that cover its vertices? This important special case of the well-known Lov\'asz-Ryser conjecture is still open. In this paper we consider a strengthening of this question, where we insist that the covering sets are not merely connected but have bounded diameter. In particular, we prove that for any colouring of $E(K_n)$ with 4 colours, there is a choice of sets $A_1, A_2, A_3$ that cover all vertices, and colours $c_1, c_2, c_3$, such that for each $i = 1,2,3$ the monochromatic subgraph induced by the set $A_i$ and the colour $c_i$ has diameter at most 160.}

\section{Introduction}

Given a graph $G$, whose edges are coloured with a colouring $\chi\colon E(G) \to C$ (where adjacent edges are allowed to use the same colour), given a set of vertices $A$, and a colour $c \in C$, we write $G[A, c]$ for the subgraph induced by $A$ and the colour $c$, namely the graph on the vertex set $A$ and the edges $\{xy\colon x,y \in A, \chi(xy) = c\}$. In particular, when $A = V(G)$, we write $G[c]$ instead of $G[V(G), c]$. Finally, we also use the usual notion of the induced subgraph $G[A]$ which is the graph on the vertex set $A$ with edges $\{xy\colon x,y \in A, xy \in E(G)\}$. We usually write $[n]=\{1,2,\dots,n\}$ for the vertex set of $K_n$.\\

Our starting point is the following conjecture of Gy\'{a}rf\'{a}s.

\begin{conjecture}\label{connConj}(\cite{GyarfasSurvey1},~\cite{GyarfasSurvey2}) Let $k$ be fixed. Given any colouring of the edges of $K_n$ in $k$ colours, we can find sets $A_1, A_2, \dots, A_{k-1}$ whose union is $[n]$, and colours $c_1, c_2, \dots, c_{k-1}$ such that $K_n[A_i, c_i]$ is connected for each $i \in [k-1]$.\end{conjecture}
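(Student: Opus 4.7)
The plan is to attempt an induction on the number of colours $k$. The base case $k=1$ is trivial, and for $k=2$ a single monochromatic component already suffices: if $K_n[1]$ is disconnected with components $D_1, \dots, D_l$, then every edge between two distinct $D_i$'s is coloured $2$, and a short argument then shows that $K_n[2]$ is connected and spans $[n]$.

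For the inductive step, I would fix a colour, say colour $k$, and consider the components $C_1, \dots, C_m$ of $K_n[k]$. If $m=1$ we are done by taking a single set coloured $k$, padding with singletons to reach $k-1$ sets. Otherwise, contract each $C_i$ to a point to obtain a complete graph $K_m$ whose edges are coloured with at most $k-1$ colours, and apply the inductive hypothesis: this yields sets $B_1, \dots, B_{k-2} \subseteq [m]$ and colours $c_1, \dots, c_{k-2}$ covering $[m]$, with $K_m[B_j, c_j]$ connected for each $j$. Setting $A_j = \bigcup_{i \in B_j} C_i$ then produces a cover of $[n]$ by $k-2$ sets — one fewer than we are allowed.

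The main obstacle — and part of the reason the conjecture is still open — is the \emph{lifting} step. The colour $c_j$ is connected in the quotient only in the sense that consecutive $C_i$'s in a $c_j$-spanning tree of the quotient are joined by \emph{some} $c_j$-edge of $K_n$; within each $C_i$ the colour $c_j$ may be highly disconnected, so $K_n[A_j, c_j]$ is typically not connected, and a naive induction loses control over exactly one set. To bypass this I would try to choose the contracted colour more cleverly — for instance the one whose largest component is maximal, or one whose quotient structure interacts well with the remaining colour classes — or to strengthen the inductive hypothesis to also track how each $c_j$-component meets the $c$-components, so that the lift produces genuinely monochromatic connected sets. A more global strategy would be to exploit the known equivalence with Ryser's conjecture for $r$-partite $r$-uniform hypergraphs, whose extremal examples come from truncations of affine planes; the rigid combinatorics of those colourings appears to be the main obstruction that any unconditional proof must confront, and it suggests that a purely inductive argument is unlikely to suffice on its own.
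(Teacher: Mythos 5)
The statement you were asked to address is presented in the paper as a \emph{conjecture}, not a theorem: the paper does not prove Conjecture~\ref{connConj} in general, and it remains open for $k \geq 6$. What the paper actually proves is the case $k=4$ (Theorem~\ref{connThm}), and by a very different route from yours: it first recasts the conjecture as a geometric statement (Conjecture~\ref{altConj}) about covering a finite subset $X \subset \mathbb{N}_0^{k-1}$ of the product graph $G_{k-1}$ by $k-1$ sets each of which either lies in a coordinate hyperplane $\{x_i = c\}$ or induces a connected subgraph, and then for $k=4$ carries out a case analysis driven by structural classification of small independent sets in $G_3$ (Lemma~\ref{4struct} and its companion for size 5). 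There is no induction on $k$ anywhere.

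Your proposal is not a proof, and to your credit you do not claim it is: you correctly isolate the obstruction. Contracting the $k$-components and applying the inductive hypothesis to the quotient $K_m$ does produce $k-2$ sets, but connectivity of $K_m[B_j, c_j]$ does \emph{not} lift: the $c_j$-edges realising a spanning structure in the quotient run between distinct $C_i$'s, while each $C_i$ may be internally $c_j$-disconnected, so $K_n[A_j, c_j]$ need not be connected. This is the gap, and the spare set you gain ($k-2$ rather than $k-1$) is not obviously enough to repair it; if a clean fix existed, it would resolve far more of the Lovász--Ryser conjecture than is currently known. Your closing observation about the affine-plane extremal configurations in the hypergraph formulation is apt — those examples indicate that any successful argument has to respect quite rigid structure, which a naive induction cannot see. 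If you want to engage with something this paper actually establishes, the right target is Theorem~\ref{connThm} via the $G_3$ reformulation, or the strengthened bounded-diameter version Theorem~\ref{mainthmbdd}.
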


This is an important special case of the well-known Lov\'{a}sz-Ryser conjecture, which we now state.

\begin{conjecture}\label{lovrys}(Lov\'{a}sz-Ryser conjecture.~\cite{Lov},~\cite{Ryser}) Let $G$ be a graph, whose maximum independent set has size $\alpha(G)$. Then, whenever $E(G)$ is $k$-coloured, we can cover $G$ by at most $(k-1)\alpha(G)$ monochromatic components.\end{conjecture}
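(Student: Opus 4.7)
The plan is to proceed by induction on $k$, with base case $k=2$ handled by the classical theorem of Gy\'arf\'as that any $2$-edge-coloured graph $G$ admits a monochromatic cover by $\alpha(G)$ components. For the inductive step, given a $k$-colouring $\chi$ of $E(G)$, I would try to pick a colour $c$ and $\alpha(G)$ components $C_1, \dots, C_{\alpha(G)}$ of $G[c]$ whose union $A$ has the property that every $c$-coloured edge is incident to $A$. Then the colouring restricted to $G[V(G) \setminus A]$ effectively uses only $k-1$ colours, and induction would furnish a cover of $G \setminus A$ by at most $(k-2)\alpha(G \setminus A) \leq (k-2)\alpha(G)$ further monochromatic components, for a total of $(k-1)\alpha(G)$.

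The difficulty concentrates entirely in this reduction step: in general, removing the vertices of a small family of components in colour $c$ still leaves many $c$-coloured edges in $G \setminus A$, so the colouring genuinely remains $k$-chromatic there and the inductive hypothesis is inaccessible. To make the induction work one would need a structural lemma asserting that in every $k$-colouring of $E(G)$ there exists a colour admitting $\alpha(G)$ components whose vertex set ``dominates'' every edge in that colour. I would first attempt this in the case $\alpha(G)=1$, that is, for the complete graph, which is precisely Conjecture~\ref{connConj}; any technique for selecting such a special component in a $k$-coloured $K_n$ should plausibly lift, via a blow-up or homomorphism argument, to general host graphs.

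A likely more productive line of attack, foreshadowed by the paper's focus on diameter and by the abstract's main result, is to strengthen the inductive hypothesis: require the covering monochromatic sets not merely to be connected but to have diameter bounded in terms of $k$. A uniform diameter bound gives a strong handle on how components interact via short monochromatic paths, making it possible to absorb would-be leftover edges into already-chosen covering sets. The main obstacle I expect, however, is that Lov\'asz-Ryser is tight on extremal colourings arising from affine planes, so any successful argument carries no slack beyond a constant factor: the first component must be chosen with essentially zero loss, which appears to require a structural understanding of families of monochromatic components that has so far eluded the field. Partial progress should come first for small $k$ (where ad hoc case analysis can substitute for a general structural lemma) and for graphs with small $\alpha$, where the extremal configurations are more constrained.
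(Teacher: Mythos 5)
This statement is the Lov\'asz--Ryser conjecture, which is an open problem; the paper states it as \textbf{Conjecture}~\ref{lovrys}, cites partial progress (Aharoni for $k=3$, Gy\'arf\'as and Tuza for complete graphs up to $k=5$), and proves none of it. There is therefore no ``paper's own proof'' against which to measure your proposal, and your text itself is not a proof but a candid discussion of why the natural inductive strategy fails.

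Since you asked for a review of the approach on its own terms: the inductive step you sketch requires a colour $c$ such that some $\alpha(G)$ of the components of $G[c]$ together meet every $c$-edge. Because $c$-components are vertex-disjoint and closed under $c$-edges, that condition is equivalent to saying $G[c]$ has at most $\alpha(G)$ non-singleton components. No such colour need exist: for instance, if every colour class is a perfect matching (as in a proper edge-colouring of $K_n$), every colour has $n/2$ non-trivial components while $\alpha=1$. So the reduction step cannot be carried out as stated, which is exactly the obstruction you flag. Your further suggestion of strengthening the inductive hypothesis with a diameter bound is in the spirit of the paper's Conjecture~\ref{bddConj}, but the paper only establishes that strengthening for $K_n$ with $k=4$ (Theorem~\ref{mainthmbdd}); it gives no mechanism to propagate a diameter hypothesis through the kind of vertex-deletion induction you describe. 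In short: you correctly identify the statement as open, you correctly locate where the na\"ive induction breaks, and nothing in your proposal constitutes a proof.

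Two small cautions: the base case attribution to Gy\'arf\'as is plausible but should be checked (the $k=2$ case of Lov\'asz--Ryser for general $G$ is usually derived via K\"onig's theorem); and in the inequality $\alpha(G\setminus A)\le\alpha(G)$ you implicitly use that an independent set of an induced subgraph is independent in $G$, which is fine, but you also need $G\setminus A$ to genuinely drop to $k-1$ colours, which, as noted, fails without the unproved structural lemma.
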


Conjectures~\ref{connConj} and~\ref{lovrys} have attracted a great deal of attention. When it comes to the Lov\'{a}sz-Ryser conjecture, we should note the result of Aharoni (\cite{Aharoni}), who proved the case of $k = 3$. For $k \geq 4$, the conjecture is still open. The special case of complete graphs was proved by Gy\'{a}rf\'{a}s (\cite{GyarfasConn}) for $k \leq 4$, and by Tuza (\cite{TuzaConn}) for $k=5$. For $k > 5$, the conjecture is open.\\
Let us also mention some results similar in the spirit to Conjecture~\ref{bddConj}. In~\cite{Ruszinko}, inspired by questions of Gy\'arf\'as (\cite{GyarfasSurvey1}), Ruszink\'{o} showed that every $k$-colouring of edges of $K_n$ has a monochromatic component of order at least $n/(k-1)$ and of diameter at most 5. This was improved by Letzter (\cite{Letzter}), who showed that in fact there are monochromatic triple stars of order at least $n/(k-1)$. For more results and questions along these lines, we refer the reader to surveys of Gy\'arf\'as (\cite{GyarfasSurvey1},~\cite{GyarfasSurvey2}).\\

In a completely different direction, relating to contaction mappings on metric spaces, the following theorem is proved in~\cite{Commuting}. (We mention in passing that the current paper is self-contained, and in particular no knowledge of~\cite{Commuting} is assumed.)

\begin{theorem}\label{contrThm} There is an absolute constant $C > 0$ such that the following holds. If $0 < \lambda < C$, and if $\{f,g,h\}$ are commuting continuous maps on a complete metric space $(X,d)$ with the property that for any two distinct points $x,y \in X$ we have $\min \{d(f(x), f(y)), d(g(x), g(y)), d(h(x), h(y))\} \leq \lambda d(x,y)$, then the maps $f,g,h$ have a common fixed point. In fact, we may take $C = 10^{-23}$.\end{theorem}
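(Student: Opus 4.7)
The plan is to reduce Theorem \ref{contrThm} to the graph-theoretic covering result of this paper. Fix a base point $x_0 \in X$ and consider the countable semigroup orbit $\mathcal{O} = \{f^a g^b h^c(x_0) : a, b, c \geq 0\}$, which by commutativity is invariant under each of $f$, $g$, $h$. The goal will be to show that any finite subset $S \subseteq \mathcal{O}$ can, after applying a suitable word $w$ in $\{f,g,h\}$, be shrunk to within an arbitrarily small ball. Combined with completeness of $(X, d)$, this produces a common limit point $x^*$, which one then verifies to be fixed by each of $f$, $g$, $h$ by applying the contraction hypothesis one more time at $x^*$.

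To shrink $S$, I would $3$-colour the complete graph on $S$ by assigning to each pair $\{u,v\}$ the map in $\{f, g, h\}$ attaining the minimum in the contraction hypothesis (breaking ties arbitrarily). The main theorem of this paper --- applied after padding with a fourth dummy colour, or alternatively a $k=3$ analogue --- then yields a covering $S = A_1 \cup A_2 \cup A_3$ with associated colours $c_i$ and corresponding maps $T_i \in \{f, g, h\}$, such that any two points of $A_i$ are joined by a $c_i$-monochromatic path inside $A_i$ of length at most $D = 160$. For $u, v \in A_i$ on such a path of length $\ell \leq D$, telescoping the contraction estimate along the path and iterating gives $d(T_i^k u, T_i^k v) \leq \lambda^k \ell \cdot \diam(S) \leq D \lambda^k \diam(S)$, so within a single colour class iterating $T_i$ kills the diameter exponentially fast.

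The main obstacle is that, while $T_i$ contracts pairs inside $A_i$, it may \emph{expand} pairs of points lying in $A_j$ for $j \neq i$, since the hypothesis only guarantees that \emph{some} one of $f, g, h$ contracts each pair. Controlling this residual expansion is exactly where commutativity enters: because $f, g, h$ commute, I can rearrange any word $w = T_{i_1} T_{i_2} \cdots T_{i_m}$ freely, and by iterating a balanced word of the form $(T_1 T_2 T_3)^k$ each colour class is eventually hit by many copies of its ``own'' $T_i$ and shrunk according to the estimate above. Bounding how much expansion from the ``wrong'' $T_j$ can accumulate on the same set then becomes a careful bookkeeping problem, which requires $\lambda$ to be small relative to some large power of $D$. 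This is the origin of the tiny constant $C = 10^{-23}$ coming from the diameter bound $D = 160$, and it is also the main technical difficulty that one would need to work out in detail.
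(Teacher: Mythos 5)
Theorem~\ref{contrThm} is not proved in this paper at all: it is imported from~\cite{Commuting}, and the introduction explicitly says so (``the following theorem is proved in~\cite{Commuting}''). The paper only tells us that Lemmas~\ref{2cols} and~\ref{3cols} were ``some of the ingredients'' in that proof. So there is no proof here for your attempt to be compared against, and the theorem you are asked to prove lies outside what the present paper establishes.

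That said, your high-level plan --- cover the complete graph on a finite subset of an orbit by a bounded number of monochromatic sets of bounded diameter, iterate the associated maps to shrink, use completeness to extract a common fixed point --- does match the strategy that the paper's remark points towards. But two things should be flagged. First, the natural tool here is the $3$-colour Lemma~\ref{3cols}, which covers by \emph{two} sets with diameter $8$; padding with a dummy fourth colour to invoke the $4$-colour/$160$ bound is both unnecessary and gives a weaker estimate, and the actual ingredient named by the paper is Lemma~\ref{3cols}, not Theorem~\ref{mainthmbdd} (which is proved here, years after~\cite{Commuting}). Your reading of $C=10^{-23}$ as coming from $D=160$ is therefore anachronistic.

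Second, and more importantly, your sketch stops exactly at the point where the real difficulty begins, and you acknowledge this yourself. The hypothesis only gives a $\lambda$-contraction for \emph{one} of $f,g,h$ per pair; the other two are merely continuous, so they have no Lipschitz bound whatsoever and can expand a given pair by an arbitrary amount. Applying a balanced word like $(T_1T_2T_3)^k$ therefore gives no a priori control over the diameter of the image: the contraction on $A_i$ in the step where you apply $T_i$ can be swamped by unbounded expansion in the other steps. Commutativity lets you reorder the word, but it does not bound expansion, and the covering $A_1\cup A_2\cup A_3$ itself changes as soon as you move the set by a map, so the assignment of ``which $T_i$ helps which subset'' is not stable under iteration. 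This is not a bookkeeping nuisance but the central obstruction, and without an idea for handling it the argument does not close. In short: the approach is in the right spirit, but as written it is an incomplete sketch of a theorem that this paper cites rather than proves.
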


Some of the ingredients in the proof of Theorem~\ref{contrThm} were the following simple lemmas. Note that Lemma~\ref{2cols} is in fact a classical observation due to Erd\H{o}s and Rado.

\begin{lemma}\label{2cols} Suppose that the edges of $K_n$ are coloured in two colours. Then we may find a colour $c$ such that $K_n[c]$ is connected and of diameter at most 3.\end{lemma}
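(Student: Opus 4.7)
The plan is to handle the two colours, call them red and blue, essentially symmetrically, by leveraging an eccentricity argument. The first observation is that at least one colour, say red, must be connected, since a disconnection of red would give a bipartition $V(K_n) = X \sqcup Y$ with every $X$--$Y$ edge blue, which in turn makes the blue graph connected of diameter at most $2$. So from the outset we may assume red is connected.

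If $\operatorname{diam}(K_n[\text{red}]) \leq 3$ we are done with $c = \text{red}$, so assume otherwise and fix vertices $u,v$ with red-distance at least $4$. The key auxiliary claim is then: for every vertex $w$, at least one of the edges $uw$, $vw$ is blue, and also $uv$ itself is blue. Both parts follow immediately from the triangle inequality in the red metric; a red $uw$ plus a red $vw$ would give $d_R(u,v) \leq 2$, contradicting the choice of $u,v$.

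Given this dichotomy, pick any two vertices $x,y$ and case-split on which of $u,v$ is blue-adjacent to each. If $x$ and $y$ share a blue neighbour among $\{u,v\}$, the path through that common hub has blue length $2$. Otherwise (up to swapping $u \leftrightarrow v$) we have $ux$ and $vy$ both blue, and then the path $x - u - v - y$ has all three edges blue, giving blue distance at most $3$. This simultaneously verifies both that the blue graph is connected and that its diameter is at most $3$, completing the proof with $c = \text{blue}$.

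The whole argument is short and the only step that requires any thought is picking the right witnesses $u,v$ at red-distance $\geq 4$ and noticing that this forces every vertex to be blue-adjacent to one of them; once that is observed, the routing of paths of length at most $3$ is automatic. There is no real obstacle, which is consistent with this being the classical Erd\H{o}s--Rado observation referenced in the paper.
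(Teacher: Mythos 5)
The paper does not include a proof of this lemma; it is stated as a classical observation of Erd\H{o}s and Rado and deferred to \cite{Commuting}, so there is no in-paper argument to compare against. Your proof is correct and is the standard one: either red is disconnected, in which case a component/complement split forces blue diameter $\leq 2$; or red is connected and, if its diameter exceeds $3$, picking $u,v$ at red-distance $\geq 4$ forces $uv$ to be blue and every other vertex to send a blue edge to $u$ or $v$ (triangle inequality in the red metric), after which every pair of vertices is joined by a blue path of length at most $3$ routed through the blue edge $uv$. The only small imprecision is that your claim ``for every vertex $w$, at least one of $uw, vw$ is blue'' should be restricted to $w \notin \{u,v\}$, and your final case-split tacitly assumes $x,y \notin \{u,v\}$; both boundary cases are immediate from the already-noted fact that $uv$ is blue, so nothing is lost.
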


\begin{lemma}\label{3cols} Suppose that the edges of $K_n$ are coloured in three colours. Then we may find colours $c_1, c_2$, (not necessarily distinct), and sets $A_1, A_2$ such that $A_1 \cup A_2 = [n]$, with $K_n[A_1, c_1], K_n[A_2, c_2]$ are each connected and of diameter at most 8.\end{lemma}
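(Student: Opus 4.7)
\medskip
\noindent\textbf{Plan.} My plan is to reduce the lemma to Lemma~\ref{2cols} via a simple trick and then cover $[n]$ by two monochromatic balls of radius four about a single carefully chosen centre, which would give the claimed diameter bound $8$, since any two vertices in $B_{c}(v,4)$ are joined by a $c$-path of length at most~$8$ through~$v$, and all intermediate vertices of such a path also lie in $B_{c}(v,4)$.

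\medskip
\noindent\textbf{Reduction.} For each of the three colours $c$, apply Lemma~\ref{2cols} to the $2$-colouring of $K_n$ obtained by treating colour $c$ as one colour and the union of the other two colours as the other. This yields, for each $c$, either (i) the colour-$c$ subgraph is spanning and of diameter at most $3$, in which case we are immediately done taking $A_1 = A_2 = [n]$ with $c_1 = c_2 = c$; or (ii) the bichromatic subgraph in the other two colours is spanning and of diameter at most $3$. Hence we may assume that each of the three pairwise bichromatic subgraphs $G_{12}, G_{13}, G_{23}$ spans $K_n$ with diameter at most $3$.

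\medskip
\noindent\textbf{Main construction.} Pick any vertex $v$, and let $N_1, N_2, N_3$ be its monochromatic neighbourhoods (if any is empty, the restricted colouring on $[n]\setminus\{v\}$ uses two colours on all edges incident to $v$ and a short analysis using Lemma~\ref{2cols} handles it). The claim I would try to prove is that, for some choice of two distinct colours $c_1, c_2 \in \{1,2,3\}$, the pair of balls $A_1 = B_{c_1}(v,4)$ and $A_2 = B_{c_2}(v,4)$ covers $[n]$. Since $N_{c_i} \subseteq B_{c_i}(v,1)$ and $v$ lies in both balls, it suffices to show that every vertex $u$ in $N_{c_3}$ (the third colour's neighbourhood) satisfies $d_{c_1}(v,u) \leq 4$ or $d_{c_2}(v,u) \leq 4$. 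Using the fact that the $(c_1,c_2)$-bichromatic graph spans with diameter $\leq 3$, there is a bichromatic path of length at most $3$ from $v$ to $u$; if this path is monochromatic the conclusion is immediate, otherwise one performs a short case analysis using in addition the spanning diameter-$3$ property of the other bichromatic subgraphs to bound $d_{c_1}$ or $d_{c_2}$ by $4$.

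\medskip
\noindent\textbf{Main obstacle.} The crux is the translation from bichromatic diameter $3$ (where paths may alternate colours freely) into monochromatic distance bound $4$. For a bad vertex $u \in N_{c_3}$ whose short $(c_1,c_2)$-path from $v$ is genuinely mixed, a monochromatic detour through another colour class is required; ensuring that at least one choice of the pair $(c_1,c_2)$ avoids bad vertices simultaneously amounts to showing that the three possible obstructions (one per choice of pair) cannot occur at once, which one derives by combining the bichromatic spanning properties across pairings. This case-chasing is where the explicit constant $8$ gets consumed, and is the technical heart of the argument.
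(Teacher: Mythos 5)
Your reduction step is fine, but the heart of your argument --- the claim that for some pair of distinct colours $c_1,c_2$ the two balls $B_{c_1}(v,4)$, $B_{c_2}(v,4)$ about a single fixed vertex $v$ cover $[n]$ --- is false, so the ``short case analysis'' you defer is not a gap to be filled but an impossibility. Concretely, take $K_4$ on $\{v,u_1,u_2,u_3\}$ with the proper $3$-edge-colouring given by $\chi(vu_i)=i$ for $i=1,2,3$ and $\chi(u_1u_2)=3$, $\chi(u_1u_3)=2$, $\chi(u_2u_3)=1$. Each colour class is a perfect matching, so each monochromatic component is a single edge; hence $B_c(v,r)=\{v,u_c\}$ for every colour $c$ and every $r\ge 1$, and any two such balls cover only three of the four vertices. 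The colouring is vertex-transitive, so no choice of centre helps. Yet all three bichromatic subgraphs are $4$-cycles of diameter $2$, so the hypothesis you establish in the reduction is satisfied; and the lemma of course still holds here, e.g.\ with $A_1=\{v,u_1\}$, $A_2=\{u_2,u_3\}$, $c_1=c_2=1$.

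This example makes the missing idea concrete: the covering pieces in Lemma~\ref{3cols} cannot in general be taken to be two monochromatic balls about a common centre. They may have to be centred at different vertices (indeed, at a pair of vertices that are far apart --- or in different components --- in some colour, mirroring the two-base-point mechanism in the Erd\H{o}s--Rado proof of Lemma~\ref{2cols}), and the same colour may be used for both sets. You would need to rebuild the construction around two base points rather than one before the case analysis you sketch has any chance of closing.
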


In~\cite{Commuting}, a common generalization of these statements and a strengthening of Conjecture~\ref{connConj} was conjectured.

\begin{conjecture}\label{bddConj} For every $k$, there is an absolute contant $C_k$ such that the following holds. Given any colouring of the edges of $K_n$ in $k$ colours, we can find sets $A_1, A_2, \dots, A_{k-1}$ whose union is $[n]$, and colours $c_1, c_2, \dots, c_{k-1}$ such that $K_n[A_i, c_i]$ is connected and of diameter at most $C_k$, for each $i \in [k-1]$.\end{conjecture}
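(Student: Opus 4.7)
The plan is to attempt Conjecture~\ref{bddConj} by induction on $k$, with the base cases $k=2$ and $k=3$ supplied by Lemmas~\ref{2cols} and~\ref{3cols} (giving $C_2 = 3$ and $C_3 = 8$). For the inductive step, assume the statement for $k-1$ colours with constant $C_{k-1}$, take any $k$-colouring $\chi$ of $E(K_n)$, and try to produce $k-1$ bounded-diameter monochromatic covering sets.

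The central technical step I would aim for is a \emph{colour-reduction lemma}: find a colour $c$ and a set $A \subseteq [n]$ with $K_n[A,c]$ connected of diameter at most some $D(k)$, such that once $A$ is removed the colour $c$ becomes ``inessential'' on $[n]\setminus A$, meaning each remaining colour-$c$ component can either be absorbed into $A$ by attaching bounded-length paths, or is small enough that the restriction to $[n] \setminus A$ behaves like a $(k-1)$-coloured complete graph for the purposes of the induction. Given such an $A$, set $A_{k-1} = A$ with colour $c_{k-1} = c$, apply the inductive hypothesis to the effective $(k-1)$-colouring on $[n]\setminus A$ to obtain $A_1, \dots, A_{k-2}$ each of diameter at most $C_{k-1}$, and combine.

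The natural way to try to produce such an $(A,c)$ is a greedy/BFS procedure: pick $v_0$, let $c$ be the colour appearing on at least $(n-1)/k$ edges at $v_0$, and grow a BFS ball $B_r(v_0)$ in colour $c$. Either some bounded radius $r \leq r(k)$ gives $|B_r(v_0)| \geq (1 - \eta_k) n$ (and we finish by a short extra argument, handling the tiny remainder by one extra colour class or by direct absorption), or the ball stabilises at some radius, in which case its boundary behaviour forces the exterior to be sparse in colour $c$. In the latter case one would iterate: find another high-degree vertex in colour $c$ outside the ball, grow a new ball, and argue via a density-increment that after boundedly many steps all of colour $c$ is absorbed into a single diameter-$O_k(1)$ superset $A$.

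The main obstacle, I expect, is precisely this last step. The known proofs of Conjecture~\ref{connConj} for small $k$ (Gy\'arf\'as for $k=4$, Tuza for $k=5$) are delicate case analyses, and crucially they produce monochromatic components whose diameter can be as large as $\Omega(n)$; upgrading them to bounded diameter is exactly the new difficulty. A truly $k$-uniform argument would seem to need a robust quantitative statement along the lines of ``either some colour has a bounded-diameter monochromatic set covering at least $\varepsilon_k n$ vertices, or two colours can be merged on a large induced subgraph without increasing diameters by more than an additive constant.'' Establishing such a dichotomy, and controlling the blow-up of $C_k$ under the induction (which would likely be at least exponential in $k$), is the step where I would anticipate needing a genuinely new idea, and it is the reason Conjecture~\ref{bddConj} is open for $k \geq 5$.
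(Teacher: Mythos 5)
You have not given a proof, and you say so yourself: the ``colour-reduction lemma'' on which the whole induction rests is exactly the missing content, and nothing in your sketch supplies it. Note first that the statement you are addressing is stated in the paper as a conjecture; the paper proves only the case $k=4$ (Theorem~\ref{mainthmbdd}, with $C_4=160$), and even the weaker Conjecture~\ref{connConj}, with no diameter requirement at all, is open for complete graphs when $k>5$. So the general statement cannot be obtained by assembling known ingredients, and your induction cannot be completed as described. Concretely, the inductive step fails at two points. First, after you fix a bounded-diameter set $A$ in colour $c$, the graph on $[n]\setminus A$ is still genuinely $k$-coloured: colour~$c$ persists there, its components outside $A$ need not be joined to $A$ by short paths in colour $c$ (they may lie in entirely different $c$-components, in which case no path of any length exists), and there is no reason they are small. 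The dichotomy you invoke (``either some colour has a bounded-diameter set covering $\varepsilon_k n$ vertices, or two colours can be merged without harming diameters'') is unsupported; moreover, merging two colours is not even compatible with the conclusion, since a connected set in the merged colour is not monochromatic and so cannot serve as one of the $A_i$. Second, the greedy/BFS ball-growing argument only controls one colour locally and gives no mechanism for covering the vertices missed by all the balls with $k-2$ further bounded-diameter monochromatic sets.

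For comparison, the paper's actual proof of the $k=4$ case proceeds quite differently and is not an induction on $k$. It introduces $c_3,c_4$-layer mappings $L\colon P\to\mathcal{P}(n)$, where $P\subset\mathbb{N}_0^2$ records (suitably normalised) distances in the other two colours, so that layers whose indices differ by at least $2$ in both coordinates see only the colours $c_3,c_4$ between them; it then analyses $3$-distant and $7$-distant subsets of $P$ (Lemmas~\ref{dist3setLemma}, \ref{3distLemma}, \ref{7distsize3}) using the two-colour results for complete bipartite and multipartite graphs (Lemmas~\ref{2colsBip} and~\ref{mult2col}), controls induced paths in the product graph $G_3$ (Lemma~\ref{g3path}, used in Lemma~\ref{3smalldiam}), and finally splits into the cases where every colour is connected (Proposition~\ref{singleComp}) and where monochromatic components of different colours intersect (Proposition~\ref{intersectingcmps}). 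If you want to make progress on the general conjecture, the honest framing is that your ``colour-reduction'' step is an open problem at least as hard as Conjecture~\ref{connConj} for $k-1$ colours, not a lemma awaiting routine verification.
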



The main result of this paper is

\begin{theorem}\label{mainthmbdd}Conjecture~\ref{bddConj} holds for 4 colours, and one may take $C_4 = 160$. \end{theorem}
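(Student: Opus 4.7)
The plan is to reduce the 4-colour problem to the 3-colour problem (Lemma \ref{3cols}) by isolating one colour class into a single monochromatic set of bounded diameter. Fix any vertex $v \in [n]$ and consider its four colour neighbourhoods $N_i = \{u : \chi(uv) = i\}$ for $i = 1, 2, 3, 4$. By pigeonhole we may assume $|N_4| \geq (n-1)/4$, and the seed set $\{v\} \cup N_4$ already has colour-$4$ diameter $2$, since $v$ is a colour-$4$ apex. The first set $A_1$ will be an enlargement of this seed in colour $4$, bounded-diameter by construction.

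Starting from $\{v\} \cup N_4$, I would perform a breadth-first expansion in colour $4$ of bounded radius, adding vertices that have a short colour-$4$ path to $v$. The expansion is terminated after a fixed number of rounds so that the resulting $A_1$ retains colour-$4$ diameter at most (say) $20$. The crucial invariant is that every newly added vertex remains close to $v$ in colour $4$. The obstruction to further enlargement is that the remaining set $R = [n] \setminus A_1$ has few colour-$4$ edges into $A_1$; otherwise we would have expanded further. This structural ``colour-$4$-resistance'' of $R$ is what makes the reduction possible.

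Next I would leverage resistance to convert the $4$-colouring on $R$ (or on a small extension of $R$) into an auxiliary $3$-colouring to which Lemma \ref{3cols} can be applied. Concretely, either colour-$4$ edges within $R$ are sufficiently sparse that a small exceptional set can be absorbed into $A_1$ (extending it by a further bounded number of BFS layers), or else one can recolour the remaining colour-$4$ edges by replacing each of them with a colour in $\{1,2,3\}$ chosen using the colour of the endpoints' edges to $v$, in a way that preserves genuine short paths up to a controlled blow-up. Lemma \ref{3cols} then yields sets $A_2, A_3$ covering the modified $R$ with auxiliary-colour diameter at most $8$ in colours $c_2, c_3 \in \{1,2,3\}$. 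Lifting each auxiliary edge to an actual monochromatic path in colour $c_i$ of bounded length, one obtains the claimed diameter bound of $160$; the arithmetic is roughly $8 \cdot (\text{path-lifting cost})$, and the expansion radius of $A_1$ gives the parallel bound for that set.

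The main obstacle is the interplay between the construction of $A_1$ and the reduction on $R$: the expansion radius of $A_1$ must be large enough that $R$ is genuinely colour-$4$-resistant (so that colour $4$ can be eliminated from the auxiliary problem), yet small enough that the diameter of $A_1$ stays within the target. Simultaneously, the recolouring of colour-$4$ edges inside $R$ must not distort monochromatic distances by more than a constant factor, since each step of an auxiliary-colour path must be realised by a genuinely monochromatic path of bounded length in the original colouring. Balancing these tensions -- radius of $A_1$, sparsity threshold for colour $4$ in $R$, and path-lifting cost -- is the technical heart of the argument and is what produces the constant $160$.
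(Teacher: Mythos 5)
The central step of your plan---build $A_1$ by stopping a colour-$4$ BFS from $v$ after a bounded number of rounds and then assert that $R=[n]\setminus A_1$ is ``colour-$4$-resistant''---does not hold. If you cap the BFS at radius $20$ and the colour-$4$ component of $v$ is, say, a long induced path, then every vertex at colour-$4$ distance $21$ from $v$ still has a colour-$4$ edge into $A_1$, and $R$ still contains an arbitrarily long colour-$4$ path; no sparsity of colour~$4$ inside or across $R$ follows. ``Otherwise we would have expanded further'' is not available once you have committed to a fixed radius. The only way to make $R$ truly free of colour-$4$ attachments to $A_1$ is to run the BFS to saturation, i.e.\ take the whole colour-$4$ component of $v$, but then its diameter is uncontrolled, which is precisely the problem you are trying to solve. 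This is exactly the tension you flag at the end, but it is not a technicality to be tuned away: without some structural input about the colouring (which is what the paper's layer-mapping and distant-set machinery provides), there is no radius that simultaneously bounds $\diam_4 A_1$ and kills colour~$4$ on $R$.

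The recolouring step has an independent gap. For $x,y\in R$ with $\chi(xy)=4$ you propose to replace colour $4$ by a colour read off from $\chi(xv)$ and $\chi(yv)$. When $\chi(xv)=\chi(yv)=c\in\{1,2,3\}$ there is indeed a $c$-path $x$--$v$--$y$ of length $2$, so the lift works (once you also add $v$ to the relevant $A_i$ so the induced subgraph stays connected). But when $\chi(xv)\neq\chi(yv)$ there is no monochromatic path through $v$ at all, and your sketch gives no recipe for which colour to use or how to realise the auxiliary edge by a genuinely monochromatic path of bounded length. So Lemma~\ref{3cols} cannot be invoked on a well-defined $3$-colouring of $R$, and the ``path-lifting cost'' that is supposed to produce $160$ has no bound. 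Compare this with the paper's Lemma~\ref{3smalldiam}: there the recolouring replaces a colour-$4$ edge $xy$ by a colour $c\in\{1,2,3\}$ only when $x$ and $y$ lie in the \emph{same} $c$-component, which is exactly what makes the lift legal; crucially, that lemma only applies when three colours already have bounded diameter, and the bulk of the paper (layer mappings, $3$- and $7$-distant sets, and the case analysis in Propositions~\ref{singleComp} and~\ref{intersectingcmps}) is devoted to the complementary situation, which your reduction does not handle.
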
 

\subsection{An outline of the proof}

We begin the proof by establishing the weaker Conjecture~\ref{connConj} for the case of 4 colours. Although this was proved by Gy\'arf\'as in~\cite{GyarfasConn}, the reasons for giving a proof here are twofold. Firstly, we actually give a different reformulation of Conjecture~\ref{connConj} that has a more geometric flavour. The proof given here and the reformulation we consider emphasize the importance of the graph $G_k$, defined as a product of $k$ copies of $K_n$, to Conjecture~\ref{connConj}. Another reason for giving this proof is to make the paper self-contained.\\
We also need some auxiliary results about colourings with 2 or 3 colours, like Lemmas~\ref{2cols} and~\ref{3cols} mentioned above. In particular, we generalize the case of 2 colours to complete multipartite graphs. Another auxiliary result we use is the fact that $G_k$ essentially cannot have large very sparse graphs.\\

The main tool in our proof is the notion of \emph{$c_3, c_4$-layer mappings}, where $c_3, c_4$ are two colours. For $P \subset \mathbb{N}_0^2$, this is a mapping $L\colon P \to \mathcal{P}(n)$, (where $[n]$ is the vertex set of our graph), with the property that
\begin{enumerate}
\item sets $L(A)$ partition $[n]$ as $A$ ranges over $P$, 
\item and for $A, B \in P$ with $|A_1 - B_1|, |A_2 - B_2| \geq 2$, we have all edges between $L(A)$ and $L(B)$ coloured using only $c_3, c_4$. 
\end{enumerate}
This is a generalization of the idea that if we fix a vertex $x_0$ and we assign $A^{(x)} = (d_{c_1}(x_0, x), x_{c_2}(x_0, x)) \in \mathbb{N}_0^2$ to each vertex $x$, where $d_{c_1}, d_{c_2}$ are distances in colours $c_1, c_2$ (which are the remaining two colours), then if $A^{(x)}, A^{(y)}$ satisfy $|A^{(x)}_1 - A^{(y)}_1|, |A^{(x)}_2 - A^{(y)}_2| \geq 2$, the edge $xy$ cannot be coloured by $c_1$ or $c_2$.\\
Given a subset $P'$ of the domain $P$, we say that it is \emph{$k$-distant} if for all distinct $A, B \in P'$ we have $|A_1 - B_1|, |A_2 - B_2| \geq k$. Once we have all this terminology set up, we begin building up structure in our graph, essentially as follows:
\begin{itemize}
\item[] \textbf{Step 1.} We prove that if a $c_3, c_4$-layer mapping has a 3-distant set of size at least 4, then Theorem~\ref{mainthmbdd} holds.
\item[] \textbf{Step 2.} We continue the analysis of distant sets, and prove essentially that if a $c_3, c_4$-layer mapping has a 6-distant set of size at least 3, then Theorem~\ref{mainthmbdd} holds.
\item[] \textbf{Step 3.} We prove Theorem~\ref{mainthmbdd} when every colour induces a connected subgraph.
\item[] \textbf{Step 4.} We prove Theorem~\ref{mainthmbdd} when any two monochromatic components of different colours intersect.
\item[] \textbf{Step 5.} We put everything together to finish the proof.
\end{itemize}

\textbf{Organization of the paper.} In the next subsection, we briefly discuss a reformulation of Conjecture~\ref{connConj}. In Section 2, we collect some auxiliary results, including results on 2-colourings of edges of complete multipartite graphs and the results on sparse subgraphs of $G_k$ and indepenent sets in $G_3$. In Section 3, we prove Conjecture~\ref{connConj} for 4 colours, reproving a result of Gy\'arf\'as. The proof of Theorem~\ref{mainthmbdd} is given in Section 4, with subsections spliting the proof into the steps described above. Finally, we end the paper with some concluding remarks in Section 5.

\subsection{Another version of Conjecture~\ref{connConj}}

Let $l$ be an integer, define the graph $G_l$ with vertex set $\mathbb{N}_0 ^l$ and put an edge between any two sequences that differ at every coordinate. Equivalently, $G_l$ is the direct product of $l$ copies of $K_{\mathbb{N}_0}$ (the complete graph on the vertex set $\mathbb{N}_0$). We formulate the following conjecture.\\[6pt]

\begin{conjecture}\label{altConj} Given a set finite set of vertices of $X \subset \mathbb{N}_0^l$, we can find $l$ sets $X_1, \dots, X_l \subseteq X$ that cover $X$ and each $X_i$ is either contained in a hyperplane of the form $\{x_i = c\}$ or $G_l[X_i]$ is connected.\end{conjecture}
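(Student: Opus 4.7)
My plan is to prove Conjecture~\ref{altConj} by a direct reduction to Conjecture~\ref{connConj} applied to $(l+1)$-coloured complete graphs. Given a finite $X \subset \mathbb{N}_0^l$, I construct the complete graph $K_X$ on vertex set $X$ and colour its edges with $l+1$ colours as follows: for each pair of distinct $u, v \in X$, if there exists $i \in [l]$ with $u_i = v_i$, assign the edge $\{u, v\}$ the colour $c_i$ (breaking ties arbitrarily among the indices where $u$ and $v$ agree); otherwise $u$ and $v$ differ in every coordinate, and I assign the extra colour $c_{l+1}$. Now apply Conjecture~\ref{connConj} to $K_X$: one obtains sets $A_1, \dots, A_l$ covering $X$, together with colours $c_{j_1}, \dots, c_{j_l}$, such that each $K_X[A_i, c_{j_i}]$ is connected.

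\textbf{Translation.} Set $X_i := A_i$. If $j_i \leq l$, then every $c_{j_i}$-coloured edge has endpoints agreeing in coordinate $j_i$, so connectivity of $K_X[A_i, c_{j_i}]$ forces all vertices of $A_i$ to share a single value in coordinate $j_i$; hence $X_i$ lies inside a hyperplane $\{x_{j_i} = c\}$. If instead $j_i = l+1$, then by construction the $c_{l+1}$-edges inside $A_i$ are precisely the $G_l$-edges inside $A_i$, so the graphs $K_X[A_i, c_{l+1}]$ and $G_l[A_i]$ coincide and connectivity transfers. Thus $X_1, \dots, X_l$ satisfy the conclusion of Conjecture~\ref{altConj}.

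\textbf{Main obstacle and scope.} The only gap in this argument is Conjecture~\ref{connConj} itself, which is resolved unconditionally only for $k \leq 5$ (by Gy\'arf\'as~\cite{GyarfasConn} for $k \leq 4$ and Tuza~\cite{TuzaConn} for $k = 5$). The reduction therefore establishes Conjecture~\ref{altConj} unconditionally for $l \leq 4$ but leaves $l \geq 5$ open, mirroring the status of Conjecture~\ref{connConj}. Conversely, one can reverse the construction by encoding an $(l+1)$-coloured $K_n$ into $\mathbb{N}_0^l$ via monochromatic component indices and enlarging each preimage to its appropriate monochromatic closure, which shows that the two conjectures are in fact equivalent; this justifies the author's framing of altConj as \emph{another version} of conjC.
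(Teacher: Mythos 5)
Your reduction argument is correct and is essentially identical to the ``Conjecture~\ref{connConj} implies Conjecture~\ref{altConj}'' half of the paper's Proposition: the paper uses the same $(l+1)$-colouring (with the cosmetic difference that ties are broken by taking the smallest agreeing coordinate index rather than arbitrarily, which makes no difference), the same hyperplane-vs-$G_l$-connectivity translation, and proves the converse direction by the same component-intersection encoding you sketch. Your honest accounting of scope is also accurate: the statement is genuinely still a conjecture, your reduction delivers it unconditionally exactly when $\mathrm{connConj}$ is known, i.e.\ $l \leq 4$ via Gy\'arf\'as and Tuza, and nothing more.

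The one thing you do differently from the paper, and it is worth flagging, is how the special case $l=3$ is discharged. You cite Gy\'arf\'as's result on $4$-colourings directly. The paper, by contrast, in Theorem~\ref{connThm} deliberately does \emph{not} lean on that citation: it proves Conjecture~\ref{altConj} for $G_3$ from scratch by a geometric argument built on the structure lemmas for independent sets of size $4$ and $5$ in $G_3$ (Lemma~\ref{4struct} and its sequel), reproving Gy\'arf\'as's theorem as a byproduct. The paper does this partly for self-containment, but more importantly because that direct $G_3$ argument is the warm-up for the quantitative bounded-diameter version that is the paper's real goal; the black-box citation you use would not help there. So your route is perfectly valid as a reduction to known results, but it trades away the self-contained geometric proof that the paper actually needs downstream.
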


This conjecture is actually equivalent to Conjecture~\ref{connConj}.

\begin{proposition} Conjectures~\ref{connConj} and~\ref{altConj} are equivalent for $k = l+1$. \end{proposition}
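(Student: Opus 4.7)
The plan is to translate between colourings of complete graphs and finite subsets of $\mathbb{N}_0^l$ in both directions, exploiting the fact that the non-edges of $G_l$ are precisely the pairs sharing some coordinate.

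For the direction $\ref{connConj} \Rightarrow \ref{altConj}$, given a finite set $X \subset \mathbb{N}_0^l$, I would colour the edges of $K_X$ with $l+1$ colours $c_0, c_1, \ldots, c_l$ as follows: if $p, q \in X$ differ in every coordinate, colour $pq$ by $c_0$; otherwise fix any index $i$ with $p_i = q_i$ and colour $pq$ by $c_i$. Applying Conjecture \ref{connConj} produces sets $X_1, \ldots, X_l$ covering $X$ together with colours $c_{i_1}, \ldots, c_{i_l}$ such that each $K_X[X_j, c_{i_j}]$ is connected. If $c_{i_j} = c_0$, then the connecting edges all lie in $G_l$, so $G_l[X_j]$ is connected. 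If $c_{i_j} = c_i$ for some $i \geq 1$, the edges of colour $c_i$ join only points sharing the $i$-th coordinate, so connectedness forces $X_j \subseteq \{x_i = c\}$ for some $c \in \mathbb{N}_0$.

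For the reverse direction $\ref{altConj} \Rightarrow \ref{connConj}$, fix an $(l+1)$-colouring of $E(K_n)$ with colours $c_1, \ldots, c_{l+1}$. For each $i \in [l]$ I would label the connected components of $K_n[c_i]$ with distinct nonnegative integers, write $\phi_i(v)$ for the label of $v$'s $c_i$-component, and set $\phi(v) = (\phi_1(v), \ldots, \phi_l(v))$. Applying Conjecture \ref{altConj} to the finite set $X = \phi([n]) \subset \mathbb{N}_0^l$ yields a covering $X_1, \ldots, X_l$, and I would define $A_j \subseteq [n]$ by cases. If $X_j \subseteq \{x_i = c\}$, I let $A_j$ be the entire $c_i$-component of $K_n$ with label $c$; then $\phi^{-1}(X_j) \subseteq A_j$ and $K_n[A_j, c_i]$ is connected. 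Otherwise $G_l[X_j]$ is connected and (treating singletons as hyperplane-contained, which they trivially are) $|X_j| \geq 2$; set $A_j = \phi^{-1}(X_j)$, and verify that $K_n[A_j, c_{l+1}]$ is connected by lifting any $G_l$-path $p_0, \ldots, p_r$ in $X_j$ to a path $w_0, \ldots, w_r$ with $w_s \in \phi^{-1}(p_s) \subseteq A_j$. Since consecutive $p_s, p_{s+1}$ differ in every coordinate, $w_s$ and $w_{s+1}$ lie in distinct $c_i$-components for every $i \in [l]$, which forces $\chi(w_s w_{s+1}) = c_{l+1}$.

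The main obstacle is the case $\phi(u) = \phi(v)$ for distinct $u, v \in A_j$ in the $G_l$-connected branch, since the above lift collapses. Here I would pick a neighbour $q$ of $\phi(u)$ in $G_l[X_j]$ (available because $|X_j| \geq 2$ and $G_l[X_j]$ is connected) and any $w \in \phi^{-1}(q) \subseteq A_j$, giving a two-edge $c_{l+1}$-coloured path $u \to w \to v$. Once the case analysis is complete, coverage of $[n]$ is automatic, since $\bigcup_j A_j \supseteq \bigcup_j \phi^{-1}(X_j) = \phi^{-1}(X) = [n]$.
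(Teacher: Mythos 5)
Your proposal is correct and follows essentially the same translation as the paper: the forward direction colours $K_X$ by shared coordinate (with an extra colour for the case where all coordinates differ), and the reverse direction encodes each vertex $v$ by the tuple $\phi(v)$ of labels of its monochromatic components, so that $\phi^{-1}(x)$ plays exactly the role of the paper's set $C_x$. You additionally spell out two small edge cases the paper leaves implicit (singleton pieces $X_j$, and distinct $u,v$ with $\phi(u)=\phi(v)$), but the underlying argument is the same.
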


\begin{proof} \emph{Conjecture~\ref{connConj} implies Conjecture~\ref{altConj}.} Let $X \subset \mathbb{N}_0^l$ be a finite set. Let $n = |X|$ and define an $(l+1)$-colouring $\chi\colon E(K_n) \to [l+1]$ by setting $\chi(xy) = i$, where $i$ is the smallest coordinate index such that $x_i = y_i$, otherwise, when $x$ and $y$ differ in all coordinates, set $\chi(xy) = l+1$. If Conjecture~\ref{connConj} holds, we may find sets $A_1, A_2, \dots, A_l$ that cover $[n]$, and colours $c_1, c_2, \dots, c_l$  such that $K_n[A_i, c_i]$ are all connected. Fix now any $i$, and let $B \subset X$ be the set of vertices corresponding to $A_i$. If $c_i \leq l$, then for any $x,y \in B$, there is a sequence of vertices $z_1, z_2, \dots, z_m \in B$ such that $x_i = (z_1)_i = (z_2)_i = \dots = (z_m)_i = y_i$, so $x_i = y_i$. Hence, $B$ is subset of the plane $\{x_i = v\}$ for some value $v$. Otherwise, if $c = l+1$, that means that the edges of $K_n[A_i, c_i]$ correspond to edges of $G[B]$, so $G[B]$ is connected, as desired.\\
\indent\emph{Conjecture~\ref{altConj} implies Conjecture~\ref{connConj}.} Let $\chi\colon E(K_n) \to [k]$ be any $k$-colouring of the edges of $K_n$. For every colour $c$, look at components $C^{(c)}_1, \dots, C^{(c)}_{n_c}$ of $K_n[c]$. For each choice of $x_1, x_2, \dots, x_{k-1}$ with $x_c \in [n_c]$ for $c \in [k-1]$, we define $C_x = C_{x_1, x_2, \dots, x_{k-1}} = \cap_{c \in [k-1]} C^{(c)}_{x_c}$, which is the intersection of monochromatic components, one for each colour except $k$. Let $X \subset \mathbb{N}^{k-1}$ be the set of all $(k-1)$-tuples $x$ for which $C_x$ is non-empty. If Conjecture~\ref{altConj} holds, then we can find $A_1, A_2, \dots, A_{k-1}$ that cover $X$ such that each $A_i$ is either contained in a hyperplane, or induces a connected subgraph of $G_{k-1}$. If $A_i \subset \{x_c = v\}$, then the corresponding intersections $C_x$ for $x \in A_i$ are all subset of $C^{(c)}_v$. On the other hand, if $G_{k-1}[A_i]$ is connected, then taking any adjacent $x,y \in G_{k-1}[A_i]$, we have that $x_c \not= y_c$ for all $c \in [k-1]$. Hence all the edges of between $C_x$ and $C_y$ are coloured by $k$. Hence, all the sets $C_x$ for $x \in A_i$ are subset of the same component of $K_n[k]$. This completes the proof of the proposition.\end{proof}

\section{Auxiliary results}

As suggested by its title, this section is devoted to deriving some auxiliary results. Firstly we extend Lemma~\ref{2cols} to complete multipartite graphs. The case of bipartite graphs is slightly different from the general case of more than 2 parts, and is stated separately. We also introduce additional notation. Given a colour $c$ and vertices $x,y$ we write $d_c(x,y)$ for the distance between $x$ and $y$ in $G[c]$. If they are not in the same $c$-component, we write $d_c(x,y) = \infty$. In particular, $d_c(x,y) < \infty$ means that $x, y$ are in the same component of $G[c]$. Further, we write $B_c(x, r)$ for the \emph{$c$-ball of radius $r$ around $x$}, defined as $B_c(x, r) = \{y\colon d_c(x,y) \leq r\}$, where $c$ is a colour, $x$ is a vertex, and $r$ is a nonnegative integer. For any graph $G$, throughout the paper, the \emph{diameter} of $G$, written $\diam G$, is the supremum of all finite distances between two vertices of $G$. Thus, $\diam G = \infty$ only happens when $G$ has arbitrarily long induced paths (as we focus on the finite graphs in this paper, this will not occur). For a colour $c$ and a set of vertices $A$, the \emph{$c$-diameter} of $A$, writen $\diam_c A$, is the diameter of $G[A, c]$. We use the standard notation for complete multipartite graphs, so $K_{n_1, n_2, \dots, n_r}$ stands for the graph with $r$ vertex classes, of sizes $n_1, n_2 \dots, n_r$, and all edges between different classes are present in the graph.

\begin{lemma}\label{2colsBip} Suppose that the edges of $G = K_{n_1, n_2}$ are coloured in two colours. Then, one of the following holds:
\begin{enumerate}
\item either there is a colour $c$, such that $G[c]$ is connected and of diameter at most 10, or
\item there are partitions $[n_1] = A_1 \cup B_1$ and $[n_2] = A_2 \cup B_2$ such that all edges in $A_1 \times A_2 \cup B_1 \times B_2$ are of one colour, and all the edges in $A_1 \times B_2 \cup B_1 \times A_2$ are of the other colour.
\end{enumerate}
\end{lemma}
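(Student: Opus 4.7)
The strategy is a case analysis on whether each colour class is connected, combined with a BFS argument that exploits the bipartite layer structure. Write $R$ and $B$ for the two colour classes and think of the coloured graph as a matrix whose rows are indexed by $[n_1]$ and columns by $[n_2]$.

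First I reduce to the case where no vertex is \emph{monochromatic} (has all its edges of one colour). If some $v \in [n_1]$ has $N_R(v) = [n_2]$, then every vertex on the $[n_1]$ side with even one red neighbour lies at $R$-distance at most $2$ from $v$; routing through $v$ gives $\diam R \leq 4$ on its component. A little care with the remaining isolated vertex then places us in alternative~(1) or~(2), so I may assume every row and every column has entries of both colours.

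Next, suppose $R$ is connected. Pick $u \in [n_1]$ and perform a BFS in $R$, giving layers $L_0, L_1, \ldots, L_d$ with $d = \diam R$. If $d \leq 10$, alternative~(1) holds for $R$, so assume $d \geq 11$. The crucial observation is that every edge between $L_i$ and $L_j$ with $|i-j| \geq 2$ is blue (else the BFS distance would collapse). I then show $\diam B \leq 3$: for two vertices $x \in L_{i_1}, y \in L_{i_2}$ on the \emph{same} side, I seek an opposite-parity layer $L_k$ with $|k-i_1|, |k-i_2| \geq 3$; the forbidden $k$ are at most four, while the available odd (or even) indices in $[0,d]$ number at least $\lceil d/2 \rceil \geq 6$, so some valid $k$ exists and any $w \in L_k$ gives a blue path $x - w - y$. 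For opposite-side pairs with $xy$ red, a length-$3$ blue path is produced similarly by choosing two intermediate layers at pairwise distance $\geq 3$, with extra care when $x = u$ so that the path does not re-enter $L_0$ or $L_1$ on the wrong colour.

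Finally, suppose $R$ is disconnected. Under the no-monochromatic-vertex hypothesis each $R$-component meets both $[n_1]$ and $[n_2]$. Group the components into two collections; letting $A_1, A_2$ be the $[n_1], [n_2]$ parts of one and $B_1, B_2$ of the other, every edge in the off-diagonal blocks $A_1 \times B_2$ and $B_1 \times A_2$ is forced to be blue. If in addition every edge in $A_1 \times A_2$ and in $B_1 \times B_2$ is red, alternative~(2) holds on the nose. Otherwise a single blue edge $x_0 y_0$ inside, say, $A_1 \times A_2$ combines with the complete blue bipartite structure on the off-diagonal blocks to give, for every vertex $z$, a blue path of length at most $3$ to $x_0$ (routed via a vertex of $B_2$ or via $y_0$), hence $\diam B \leq 6$ and alternative~(1) holds for $B$. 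By the colour symmetry the same argument applies if $B$ is disconnected.

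The main obstacle is the BFS argument of the second step: one must ensure that blue paths of length $2$ or $3$ always exist between arbitrary pairs, which requires a slightly delicate parity/forbidden-layer count, especially in the opposite-side subcase where the path has odd length and must avoid accidentally picking up a red BFS edge at its endpoints. The slack $d \geq 11$ is chosen precisely to make this counting go through with constants to spare, which also absorbs the small degenerate cases arising when $n_1$ or $n_2$ is small.
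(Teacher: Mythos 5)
Your route is genuinely different from the paper's. The paper never runs a BFS; it instead proves a short observation (if $v_1,v_2$ lie in the same $c_1$-component with $6\leq d_{c_1}(v_1,v_2)<\infty$, then every $c_1$-neighbour $u$ of $v_1$ satisfies $d_{c_2}(u,v_1)\leq 3$), deduces that if some monochromatic component has diameter at least $7$ then the opposite colour is connected with diameter at most $9$, and finishes the remaining ``all components have small diameter'' situation by a direct count of the number of monochromatic components per colour ($\geq 3$ forces the other colour connected; exactly $2$ per colour yields the product decomposition). Your BFS layer-parity counting is more explicit and, in the overlapping regime (one colour connected with large diameter), even gives a sharper bound ($\leq 3$ rather than $\leq 9$); your grouping of red components in the disconnected case is likewise a clean replacement for the paper's component count. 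Those parts are correct.

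The gap is in your opening reduction: ``a little care with the remaining isolated vertex then places us in alternative (1) or (2)'' is not substantiated, and in fact cannot be. Consider $K_{3,2}$ with rows $v,u',w$ and columns $c_1,c_2$, where both edges at $v$ are red, both edges at $u'$ are blue, $wc_1$ is red and $wc_2$ is blue. Then $u'$ is isolated in $G[R]$ and $v$ is isolated in $G[B]$, so neither colour class is connected; and alternative (2) fails too, since any product partition would require the mixed row $w$ to see $A_2$ in one colour and $B_2$ in the other, which is incompatible with the constant rows $v$ and $u'$. So configurations with monochromatic vertices are not covered by the lemma as stated, and you cannot dispose of them by appeal to (1) or (2). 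For what it is worth, the paper's own proof has the same blind spot (its final ``exactly two components per colour'' step assumes $A_1\cup B_2$ is $c_2$-connected, which fails when $B_2=\emptyset$, i.e.\ when a red component is a single vertex in one side), so this is a defect of the statement rather than a flaw unique to your write-up; but your reduction should either add the hypothesis that every vertex meets both colours, or make precise what the reduction actually yields, rather than gesture at it.
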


\begin{proof} Let $\chi$ be the given colouring. We start by observing the following. If there are two vertices $v_1, v_2$ such that for colour $c_1$ the inequality $6 \leq d_{c_1}(v_1,v_2) < \infty$ holds, then for every vertex $u$ such that $\chi(uv_1) = c_1$, we must also have $d_{c_2}(u,v_1) \leq 3$, where $c_2 \not= c_1$ is the other colour. Indeed, let $v_1 = w_0, w_1, w_2, \dots, w_r = v_2$ be a minimal $c_1$-path from $v_1$ to $v_2$. Hence $r \geq 6$, the vertices $w_i$ with the same parity of index belong to the same vertex class of $G = K_{n_1, n_2}$ and the edges $v_1w_3 = w_0w_3, w_3w_6, w_6u \in E(G)$ are all of colour $c_2$ (otherwise, we get a contradiction to the fact that $d_{c_1}(w_i, v_2) = r - i$), implying that $d_{c_2}(v_1, u) \leq 3$.\\
Now, suppose that a $c_1$-component $C_1$ has diameter at least 7. The observation above tells us that if a vertex $y$ is adjacent to $x_1$, and $d_{c_2}(x_1,y) > 1$, then $\chi(x_1, y) = c_1$, so $d_{c_2}(x_1, y) \leq 3$. Hence, every vertex $y$ adjacent to $x_1$ in $G$, satisfies $d_{c_2}(x_1, y) \leq 3$. Similarly, any vertex $y$ adjacent to $x_2$ satisfies $d_{c_2}(x_2, y) \leq 3$. But, $x_1, x_2$ are in different vertex classes (as their $c_1$-distance is odd), so their neighbourhhoods cover the whole vertex set, and $x_1 x_2$ is an edge as well, from which we conclude that $G[c_2]$ is connected and of diameter at most 9. Thus, if any monochromatic component has diameter at least 7, the lemma follows, so assume that this does not occur.\\
Now we need to understand the monochromatic components. From the work above, it suffices to find monochromatic components of the desired structure, the diameter is automatically bounded by 6. Suppose that there are at least 3 $c_1$-components, $X_1 \cup X_2, Y_1 \cup Y_2, Z_1 \cup Z_2$ with $X_1, Y_1, Z_1$ subsets of one class of $K_{n_1, n_2}$ and $X_2, Y_2, Z_2$ subsets of the other. Let $u, v \in X_1\cup Y_1\cup Z_1$ be arbitrary vertices. Then we can find $w \in X_2\cup Y_2\cup Z_2$ in different $c_1$-component from $u, v$. Hence, $\chi(uw) = \chi(wv) = c_2$, so $d_{c_2}(u,v) \leq 2$. Therefore, both vertex classes of $G$ are $c_2$-connected and consequently the whole graph is $c_2$-connected.\\
Finally, assume that each colour has exactly 2 monochromatic components. Let $[n_1] = A_1 \cup B_1, [n_2] = A_2 \cup B_2$ be such that $A_1 \cup A_2, B_1 \cup B_2$ are the $c_1$-components. Hence, $A_1 \cap B_1 = A_2 \cap B_2 = \emptyset$, and all edges in $A_1 \times B_2$ and $B_1 \times A_2$ are of colour $c_2$. Thus, sets $A_1 \cup B_2$ and $B_1 \cup A_2$ are $c_2$-connected and cover the vertices of $G$, so they must be the 2 $c_2$-components. Thus, all edges in $A_1\times A_2$ and $B_1 \times B_2$ must be coloured by $c_1$, proving the lemma.
\end{proof}

\begin{lemma} \label{mult2col}Let $r\geq 3$, and suppose that $G = K_{n_1, n_2, \dots, n_r}$ is a complete $r$-partite graph . Suppose that the edges of $G$ are 2-coloured. Then, there is a colour $c$ such that $G[c]$ is connected and of diameter at most $C_r$, where we can take $C_3 = 20$, and $C_r = 60$ for $r > 3$.\end{lemma}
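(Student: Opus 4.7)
The plan is to reduce the multipartite statement to the bipartite case handled by Lemma~\ref{2colsBip}, applied to a chosen pair of vertex classes, and then propagate the resulting structure to the remaining classes. Concretely, I would pick two vertex classes, say $V_1$ and $V_2$, and apply Lemma~\ref{2colsBip} to the bipartite subgraph on $V_1 \cup V_2$. Either Case~(i) of that lemma holds, giving a colour $c$ with $G[V_1 \cup V_2, c]$ connected of diameter at most $10$, or Case~(ii) gives partitions $V_1 = A_1 \cup B_1$ and $V_2 = A_2 \cup B_2$ with the associated split colouring.

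In Case~(i), I would propagate $c$-connectivity to the other classes. Any vertex $v$ outside $V_1 \cup V_2$ either has at least one $c$-edge into $V_1 \cup V_2$, in which case it joins the $c$-component of $V_1 \cup V_2$ at $c$-distance at most $11$, or all of its edges to $V_1 \cup V_2$ are the other colour $c'$, making $v$ a ``$c'$-hub'' universally $c'$-adjacent to $V_1 \cup V_2$. If no $c'$-hubs exist, the colour $c$ suffices, with $c$-diameter bounded by roughly $12$. Otherwise I would switch to $c'$: the hubs serve as universal connectors, so any two vertices can be routed through $V_1 \cup V_2$ and a hub, giving bounded $c'$-diameter after a short case check on vertices whose edges to $V_1 \cup V_2$ are mixed.

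In Case~(ii), let $\alpha = A_1 \cup A_2$, $\beta = B_1 \cup B_2$ (the two $c_1$-components in $G[V_1 \cup V_2]$) and $\gamma = A_1 \cup B_2$, $\delta = B_1 \cup A_2$ (the two $c_2$-components). For each $v$ in a class $V_k$ with $k \geq 3$, I would classify $v$ as follows. If $v$ has a $c_1$-edge to both $\alpha$ and $\beta$, then $v$ merges the two $c_1$-components (Type~$1$); otherwise, all of $v$'s edges to one of $\alpha, \beta$ are $c_2$, and since that set meets both $c_2$-components, $v$ merges the two $c_2$-components (Type~$2$). If every outside vertex is Type~$1$, we obtain a connected $G[c_1]$ via paths that pass through $V_1 \cup V_2$; if every outside vertex is Type~$2$, we obtain $G[c_2]$ similarly. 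In the mixed case, inspecting an edge between a Type~$1$ and a Type~$2$ vertex in different classes, the colour of that edge decides which of the two colours globally connects.

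The main obstacle is the mixed configuration in Case~(ii) together with the bookkeeping needed to match the precise constants $C_3 = 20$ and $C_r = 60$. The jump from $20$ to $60$ for $r \geq 4$ reflects the additional flexibility, and the occasional need for longer detours, when the extra classes introduce vertices whose Type~$1$/Type~$2$ behaviour forces routing through multiple intermediate classes; for $r = 3$ one can exploit the exact split structure to keep every intermediate hop short, whereas for larger $r$ one trades sharpness for a uniform bound. Ensuring that distances through both $c_1$-paths within $\alpha \cup \beta$ and $c_2$-paths via bridge vertices combine to stay within the declared bounds is the technical heart of the argument.
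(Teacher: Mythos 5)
Your approach differs from the paper's in a way that opens a real gap, and it also leaves the $r>3$ case without a concrete argument. For $r=3$ the paper applies Lemma~\ref{2colsBip} to \emph{all three} pairs of vertex classes and runs a symmetric case analysis over the possible combinations of outcomes, aided by a small ``Observation'': if one pair of classes lands in the single-component outcome and another pair lands in the split outcome, the connected colour propagates with a controlled diameter increase ($N_1 + 2N_2$). For $r>3$ the paper does \emph{not} directly propagate; it fixes $V_r$, applies the already-proved $r=3$ case to each triple $V_i\cup V_j\cup V_r$ to obtain an auxiliary $2$-colouring $\chi'$ of $K_{r-1}$, invokes Lemma~\ref{2cols} on $\chi'$ to get a colour with $\chi'$-diameter at most $3$, and unwinds this to a $c$-path in $G$ of length at most $60=3\cdot 20$. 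That reduction is where the constant $60$ comes from and is completely missing from your plan, which offers only a vague promise about ``longer detours.''

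The more serious flaw is in your Case~(i) propagation. After Lemma~\ref{2colsBip} gives $G[V_1\cup V_2,c]$ connected of small diameter, you split the outside vertices into those with a $c$-edge into $V_1\cup V_2$ and the $c'$-hubs, and assert that if any $c'$-hub exists you can ``switch to $c'$.'' That step is unjustified: nothing you have said controls $G[c']$. In particular, if some outside class contains both a $c$-hub $u$ (all edges into $V_1\cup V_2$ coloured $c$) and a $c'$-hub $v$, then $u$ and $v$ are non-adjacent, $v$ is isolated from $V_1\cup V_2$ in colour $c$, and $u$ is isolated from $V_1\cup V_2$ in colour $c'$; the proposed ``short case check on mixed vertices'' does not touch this configuration, so the argument does not close. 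The paper sidesteps this precisely by never trying to extend from a single pair: by applying the bipartite lemma to every pair it always has enough structure to match up colours or partitions across the three class-pairs. Your Type~$1$/Type~$2$ discussion in Case~(ii) is in the right spirit (it resembles the paper's Case~3), but without repairing Case~(i), supplying a real argument for $r>3$, and tracking the constants, the proposal does not establish the lemma.
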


\begin{proof} \textbf{Assume first that $r=3$.} Let $A, B, C$ be the vertex classes. We shall use Lemma~\ref{2colsBip} throughout this part of the proof, applying to every pair of vertex classes. We distinguish three cases, motivated by the possible outcomes of Lemma~\ref{2colsBip} (although not exactly these outcomes, but resembling them).\\
\textbf{Observation.} Suppose that $D, E, F$ is a permutation of $A, B, C$ and that $D \cup E$ is contained in a $c_1$-component of diameter at most $N_1$, and $D \cup F$ for each colour splits into two monochromatic components, all of diameter at most $N_2$. Then, $G[c_1]$ is connected and of diameter at most $N_1 + 2N_2$.\\
\textbf{Case 1.} Suppose that $D, E, F$ is a permutation of $A, B, C$, and that Lemma~\ref{2colsBip} gives different outcomes when applied to pairs $D, E$ and $D, F$. Then, by the Observation, there is a colour $c$ such that $G[c]$ is connected and of diameter at most 14. (We took $N_1 = 10$ and $N_2 = 2$.)\\
\textbf{Case 2.} Suppose that $D, E, F$ is a permutation of $A, B, C$, and that Lemma~\ref{2colsBip} gives a single monochromatic component for each of pairs $D, E$ and $D, F$. If we use the same colour $c$ for both pairs, then $G[c]$ is connected and of diameter at most 20. Otherwise, let $D\cup E$ be $c_1$-connected, and let $D \cup F$ be $c_2$-connected, with $c_1 \not= c_2$. Apply Lemma~\ref{2colsBip} to $E, F$. If it results in a single monochromatic component, it must be of colour $c_1$ or $c_2$, so once again $G[c]$ has diameter at most 20 for some $c$. Finally, if $E \cup F$ splits in two pairs of monochromatic components, by Observation $G[c]$ has diameter at most 14, for some $c$.\\
\textbf{Case 3.} Lemma~\ref{2colsBip} gives the second outcome for each pair of vertex classes. Look at complete bipartite graphs $G[A\cup B]$ and $G[A\cup C]$. Then, we have partitions $A = A_1 \cup A_2 = A'_1 \cup A'_2$, $B = B_1 \cup B_2$ and $C = C_1 \cup C_2$ such that all edges $(A_1 \times B_1)\cup(A_2 \times B_2) \cup (A'_1 \times C_1) \cup (A'_2 \times C_2)$ receive colour $c_1$, while the edges $(A_1 \times B_2)\cup(A_2 \times B_1) \cup (A'_1 \times C_2) \cup (A'_2 \times C_1)$ take the other colour $c_2$. If $\{A_1, A_2\} \not= \{A'_1, A'_2\}$, then we must have that some $A_i$ intersects both $A'_1, A'_2$, or vice-versa. In particular, since any two vertices $x, y$ in the same set among $A_1, A_2, A'_1, A'_2$ obey $d_{c_1}(x,y) \leq 2$, this means that for any two vertices $x,y \in A$, we have $d_{c_1}(x,y) \leq 6$. Now, every point in $B \cup C$ in on $c_1$-distance at most 1 from a vertex in $A$, so $G[c_1]$ is connected and of diameter at most 8. Hence, we may assume that $A_1 \cup A_2$ and $A'_1 \cup A'_2$ are the same partitions of $A$, and similarly for $B$ and $C$, we get the same partition for both pairs of vertex classes involving each of $B$ and $C$. Let $A = A_1 \cup A_2, B = B_1 \cup B_2, C = C_1 \cup C_2$ be these partitions, so the colouring is constant on each product $A_i \times B_j, A_i \times C_j, B_i \times C_j$, $i,j \in \{1,2\}$. Renaming $B_i, C_j$, we may also assume that $A_1 \times B_1, A_2 \times B_2, A_1 \times C_1, A_2 \times C_2$ all receive colour $c_1$. Thus $A_1 \times B_2, A_2 \times B_1, A_1\times C_2, A_2 \times C_1$ all receice colour $c_2$. But looking at the colour $c$ of $B_1 \times C_2$, we see that $G[c]$ is connected and of diameter at most 5. This finishes the proof of the case $r=3$, and we may take $C_3 = 20$.\\
 
\textbf{Now suppose that $r > 3$.} Let $V_1, V_2, \dots, V_r$ be the vertex classes. Fix the vertex class $V_r$, and look at the 2-colouring $\chi'$ of the edges of $K_{r-1}$ defined as follows: whenever $i, j \in [r-1]$ are distinct, then applying the case $r=3$ of this lemma that we have just proved to the subgraph induced by $V_i \cup V_j \cup V_r$, we get a colour $c$ such that $G[V_i \cup V_j \cup V_r, c]$ has diameter at most 20; we set $\chi'(ij) = c$. By Lemma~\ref{2cols}, we have a colour $c$ such that $K_{r-1}[c]$ is of diameter at most 3 for the colouring $\chi'$. Returning to our original graph, we claim that $G[c]$ has diameter at most 60. Suppose that $x,y$ are any two vertices of $G$. If any of these points lies in $V_r$, or if they lie in the same $V_i$, then we can pick $i, j$ such that $y \in V_i \cup V_j \cup V_r$ and $\chi'(ij) = c$. Hence, by the definition of $\chi'$, we actually have $d_c(x,y) \leq 20$ in $G$. Now, assume that $x,y$ lie in different vertex classes and outside of $V_r$. Let $x \in V_i, y \in V_j$. Under the colouring $\chi'$ of $K_{r-1}$ we have that $d_c(ij) \leq 3$, so we have a sequence $i_1 = i, i_2, \dots, i_s = j$, with $s \leq 4$, such that $\chi'(i_1 i_2) = \dots = \chi'(i_{s-1} i_s) = c$. For each $t$ between 1 and $s$, pick a representative $x_t \in V_{i_t}$, with $x = x_1, y= x_s$. Then, $d_c(x_{t-1}, x_t) \leq 20$, so $d_c(x,y) = d_c(x_1, x_s) \leq 60$, as desired.\end{proof}

\subsection{Induced subgraphs of $G_l$}

Recall that $G_l$ is the graph on $\mathbb{N}^l$, with edges between pairs of points whose all coordinates differ. In this subsection we prove a few properties of such graphs, particularly focusing on $G_3$. We begin with a general statement, which will be reproved for specific cases with stronger conclusions.

\begin{lemma}\label{genSparse} If $S$ is a set of vertices in $G_l$ and the maximal degree of $G[S]$ is at most $d$, then the number of non-isolated vertices of $G[S]$ is at most $O_{l,d}(1)$.\end{lemma}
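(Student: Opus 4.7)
The plan is to iteratively localise the set $T\subseteq S$ of non-isolated vertices of $G[S]$ inside a shrinking ``coordinate slab''. I would build a descending chain $T=B^{(0)}\supseteq B^{(1)}\supseteq\cdots\supseteq B^{(l)}$ where $B^{(k)}$ consists of exactly those vertices of $T$ whose coordinates on some fixed set of indices $\{i_1,\dots,i_k\}\subseteq[l]$ take prescribed values. After $l$ steps every coordinate is pinned down, so $|B^{(l)}|\le 1$, and back-propagating through the chain yields the desired bound on $|T|=|B^{(0)}|$.

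At step $k$ (assuming $B^{(k)}\ne\emptyset$) I pick any $v_k\in B^{(k)}$ and a neighbour $u_k$ of $v_k$ in $G_l$; such a neighbour exists in $T$ since $v_k$ is non-isolated (and any neighbour of a non-isolated vertex is itself non-isolated). The key observation is that $u_k$ differs from $v_k$ in \emph{every} coordinate, so in particular in each of the coordinates $i_1,\dots,i_k$ already fixed; since every $v\in B^{(k)}$ agrees with $v_k$ on these fixed coordinates, every $v\in B^{(k)}$ disagrees with $u_k$ on all of them. Now for $v\in B^{(k)}\setminus\{v_k\}$ there are two possibilities: either $v$ is one of the at most $d$ neighbours of $u_k$, or $v$ and $u_k$ agree on some coordinate, which by the previous sentence must be one of the $l-k$ still-unfixed ones. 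Pigeonholing over these $l-k$ coordinates, I find an index $i_{k+1}\notin\{i_1,\dots,i_k\}$ such that $B^{(k+1)}:=\{v\in B^{(k)}:v_{i_{k+1}}=u_{k,i_{k+1}}\}$ has size at least $(|B^{(k)}|-1-d)/(l-k)$ and has one additional fixed coordinate.

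After at most $l$ iterations all $l$ coordinates of $B^{(l)}$ are fixed, forcing $|B^{(l)}|\le 1$. Unfolding the recursion
\[
|B^{(k+1)}|\ge\frac{|B^{(k)}|-1-d}{l-k}
\]
from $k=l-1$ down to $k=0$ then gives $|T|\le C_l\cdot(1+d)$ for a constant $C_l$ of order $l!$, which is certainly $O_{l,d}(1)$ as required. I do not foresee a substantial obstacle — the argument is a clean iterated pigeonhole. The only point that must be verified with care is that the coordinate shared by $v$ and $u_k$ is unfixed, and this follows automatically from the fact that $u_k$ is adjacent to $v_k\in B^{(k)}$ and so $u_k$ disagrees with the common fixed values of $B^{(k)}$ on each of $i_1,\dots,i_k$. (If a sharper bound were wanted one would need to work harder, but for the stated $O_{l,d}(1)$ this iterated pigeonhole is enough.)
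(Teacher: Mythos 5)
Your argument is correct, and it is genuinely different from the paper's. The paper bounds the number of non-isolated vertices via Ramsey's theorem: it first passes to a large ``scattered'' subset $S''\subseteq S'$ whose closed neighbourhoods are pairwise disjoint (losing a factor $d^2+d+1$), observes that $S''$ is independent in $G_l$, colours each pair by the non-empty set of agreeing coordinates, and extracts an $(l+1)$-clique monochromatic in some $I_0$ using the $(2^l-1)$-colour Ramsey number; the contradiction then comes from counting how a single outside neighbour can agree with the clique members. Your proof instead runs a direct iterated pigeonhole that successively pins down coordinates: at each stage you fix one more coordinate at the cost of a factor $(l-k)$ and an additive $1+d$, arriving at a singleton after $l$ steps. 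The one point worth making explicit is that you need $u_k\in S$ (not merely a $G_l$-neighbour of $v_k$) so that the degree bound in $G[S]$ applies to $u_k$; you do in fact take $u_k$ from $T\subseteq S$, so this is fine. Your route is both more elementary (no Ramsey) and quantitatively far stronger: it yields a bound of order $l!\cdot(1+d)$, whereas the paper's argument gives $(d^2+d+1)\cdot R_{2^l-1}(l+1)$ with a multicolour Ramsey number as the dominant factor. This is a clean improvement of the paper's Lemma~\ref{genSparse}, though of course for the specific case $l=3$, $d=2$ the paper replaces it anyway with the sharper ad hoc bound of Lemma~\ref{g3path}.
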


\begin{proof} By Ramsey's theorem we have an $N$ such that whenever $E(K_N)$ is coloured using $2^l - 1$ colours, there is a monochromatic $K_{l+1}$. Let $S'$ be the set of non-isolated vertices in $S$. We show that $|S'| < (d^2 + d +1)N$. Suppose contrary, since the maximal degree is at most $d$, we have a subset $S'' \subset S$ of size $|S''| \geq N$ such that sets $s \cup N(s)$ are disjoint for all $s \in S''$ (simply pick a maximal such subset, their second neighbourhoods must cover the whole $S'$). In particular, $S''$ is an independent set in $G_l$, so for every pair of vertices $x, y \in S$, the set $I(x,y) = \{i \in [l]\colon x_i = y_i\}$ is non-empty. Thus, $I\colon E(K_{S''}) \to \mathcal{P}(l)\setminus\{\emptyset\}$ is $2^l-1$ colouring of the edges of a complete graph $K_{S''}$ on the vertex set $S''$. By Ramsey's theorem, there is a monochromatic clique on subset $T \subset S''$ of size at least $l+1$, whose edges are coloured by some set $I_0 \not=\emptyset$. Take a vertex $t \in T$, and since $t$ is not isolated and the neighbourhoods of vertices in $S''$ are disjoint, we can find $x \in S'$ such that $tx$ is an edge, but $t'x$ is not for other $t' \in T$. Hence, $x_i \not= t_i$ for all $i\in[l]$ and for distinct $t', t'' \in T$ we have $t'_i = t''_i$ if and only if $i \in I_0$. Thus, $x_i \not= t'_i$ for all $t' \in T$ and $i \in I_0$. But, $x t'$ is not an edge for $t' \in T \setminus \{t\}$, so we always have $i \in [l] \setminus I_0$ such that $x_i = t'_i$. But, for each $i \in I_0$, the values of $t'_i$ are distinct for each $t' \in T$. Hence, for each $i$, there is at most one vertex $t' \in T\setminus \{t\}$ such that $x_i = t'_i$. Therefore $|T| - 1 \leq |[l] \setminus I_0| \leq l-1$, so $|T| \leq l$, which is a contradiction.\end{proof}

We may somewhat improve on the bound in the proof of the lemma above by observing that for colour $I_0$ we only need a clique of size $l - |I_0| + 2$. Thus, instead of Ramsey number
$$R(\underbrace{l+1, l+1, \dots, l+1}_{2^l-1}),$$
we could use
$$R(l+2 - |I_1|, l+2 - |I_2|, \dots, l+2 - |I_{2^l-1}|),$$
where $I_{i}$ are the non-empty sets of $[l]$. But, even for paths in $G_3$, which we shall use later, taking $l = 3, d = 2$, we get the final bound of $7 R(2, 3, 3, 3, 4, 4, 4)$, where 7 comes from $d^2 + d + 1$ factor we lose when moving from $S'$ to $S''$. We now improve this bound.

\begin{lemma}\label{g3path} If $S$ is a set of vertices of $G_3$ such that $G_3[S]$ is a path, then $|S| \leq 30$.\end{lemma}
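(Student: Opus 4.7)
The plan is to combine the bounded-degree argument behind Lemma~\ref{genSparse} with extra structural constraints coming from the fact that $G_3[S]$ is a path, not merely a sparse graph. Label the vertices as $v_1, v_2, \ldots, v_n$ in path order, and for distinct indices $j, k$ set $E(j, k) = \{i \in [3] : (v_j)_i = (v_k)_i\}$. Edges of $G_3[S]$ correspond to pairs with $E(j, k) = \emptyset$, so since the path is induced, $E(j, k)$ is a non-empty proper subset of $[3]$ whenever $|j - k| \geq 2$; in particular $|E(j, k)| \in \{1, 2\}$.

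The key tool will be the following disjointness: for $k \geq j + 2$ one has $E(j, k) \cap E(j, k+1) = \emptyset$, because any $i$ in the intersection would give $(v_k)_i = (v_j)_i = (v_{k+1})_i$, contradicting that $v_k v_{k+1}$ is an edge. Symmetrically $E(j, k) \cap E(j+1, k) = \emptyset$ when both pairs are non-edges, and there is a transitivity statement: for pairwise far $j < k < \ell$, $i \in E(j, k) \cap E(j, \ell)$ forces $i \in E(k, \ell)$. Thus ``same $i$-th coordinate'' is an equivalence relation on the indices, and each of its classes must avoid consecutive pairs.

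With these tools in hand, the plan is to first fix $j$ and study the sequence $E(j, j+2), E(j, j+3), \ldots, E(j, n)$ as a walk on the six non-empty proper subsets of $[3]$ with the ``disjoint'' adjacency: the three doubletons are semi-leaves (a doubleton is disjoint only from its complementary singleton), so every doubleton forces the next entry. Next, couple the walks arising from $j = 1, 2, 3$: for every $k \geq 5$ the triple $(E(1, k), E(2, k), E(3, k))$ has consecutive entries disjoint, which already forces $E(2, k)$ to be a singleton outside $E(1, k) \cup E(3, k)$ and restricts the joint trajectory to very few possibilities. Finally, apply transitivity: each time the walk revisits a singleton $\{i\}$, a new index joins an equivalence class in coordinate $i$, and the non-consecutive-pair condition inside every equivalence class caps how often this can happen. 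Tracking these budgets together should force $n \leq 30$.

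The main obstacle I expect is the final combinatorial bookkeeping. Up to the symmetry group permuting coordinates, the joint trajectories compatible with the disjointness and transitivity constraints form a finite but nontrivial family, and the bound $30$ emerges as the tightest outcome of enumerating them: continuing any such trajectory past position $30$ must produce either a violation of disjointness or transitivity, or a repeated vertex along the path (contradicting simplicity of the path). The number $30$ is not the output of a single elegant inequality but the balance point between the singleton--singleton alternation budget, the forced doubleton--singleton moves, and the limited number of equivalence classes the path can support in each of the three coordinates.
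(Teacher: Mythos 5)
Your structural observations are correct: for non-adjacent indices on the path, $E(j,k) = \{i : (v_j)_i = (v_k)_i\}$ is a non-empty proper subset of $[3]$; $E(j,k) \cap E(j,k+1) = \emptyset$ when $k \geq j+2$; and coincidence in the $i$-th coordinate is an equivalence relation whose classes contain no two consecutive path indices. The ``walk on six subsets'' picture, with doubletons as degree-one vertices forced to jump to their complementary singleton, is also right. But the proof stops at a plan. You explicitly defer the decisive step --- ``the final combinatorial bookkeeping'' --- and the statement that ``tracking these budgets together should force $n \leq 30$'' is a hope, not an argument. Nothing you have written rules out, say, a path of length $50$: you would need to exhibit the actual enumeration of joint trajectories (or a clean counting inequality) and show where each one dies. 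Without that, there is no proof of the bound.

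The gap is not cosmetic. The hard part of this lemma \emph{is} the bookkeeping, and the coupling of the three walks $(E(1,k), E(2,k), E(3,k))$ is delicate: you need to keep track not just of which subsets appear but of which specific coordinate values they encode, since revisiting a singleton $\{i\}$ with a \emph{new} value in coordinate $i$ is unconstrained by the equivalence-class budget you describe. The paper avoids this by a different route: it first splits on whether every $s_i$ with $i \in \{4,\dots,10\}$ agrees with $s_1$ or $s_2$ in at least two coordinates (giving only finitely many possible vertices, hence $|S| \leq 9$), and otherwise locates a specific vertex $s_{i_0}$ far from $s_1, s_2$; from there it pins all later $s_j$ onto three explicit lines plus one extra point, and exploits that two of those three lines are mutually non-adjacent so the path must alternate in a constrained way, yielding a contradiction past index $30$. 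Your plan could plausibly be turned into a proof along similar lines, but as written it does not get there; you would need to actually carry out the trajectory enumeration (or find the inequality) and verify that $30$ falls out.
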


\begin{proof} Let $S = \{s_1, s_2, \dots, s_r\}$ be such that $s_1, s_2, \dots, s_r$ is an induced path in $G_3$, so the only edges are $s_i s_{i+1}$.\\
\textbf{Case 1.} For all $i \in \{4, 5, \dots, 10\}$, $s_i$ coincides with one of $s_1$ or $s_2$ in at least two coordinates.\\
\indent Since $s_1 s_2$ is an edge, $s_1$ and $s_2$ have all three coordinates different. Thus, for $i \in \{4,5,\dots, 10\}$, we have $(s_i)_c \in \{(s_1)_c, (s_2)_c\}$ for all coordinates $c$. Hence, there are only at most 6 possible choices of $s_i$ (as $s_i \not= s_1, s_2$), so $r \leq 9$.\\
\textbf{Case 2.} There is $i_0 \in \{4,5, \dots, 10\}$ with at most one common coordinate with each of $s_1, s_2$. Since $s_1 s_{i_0}, s_2 s_{i_0}$ are not edges, w.l.o.g. we have $s_1 = (x_1, x_2, x_3), s_2 = (y_1, y_2, y_3), s_{i_0} = (x_1, y_2, z_3)$, where $x_i \not= y_i$, $z_3 \notin \{x_3, y_3\}$. Consider any point $s_j$, for $j\geq i_0 + 2$. It is not adjacent to any of $s_1, s_2, s_{i_0}$. If $(s_j)_1 = x_1$ and $(s_j)_2 \not= y_2$, then $(s_j)_3 = y_3$. Similarly, if $(s_j)_1 \not= x_1$ and $(s_j)_2 = y_2$, then $(s_j)_3 = x_3$. Also, if $(s_j)_1 \not= x_1, (s_j)_2 \not= y_2$, then $s_j = (y_1, x_2, z_3)$. Hence, for $j\geq i_0 + 2$, the point $s_j$ is on one of the lines
$$(x_1, y_2, \cdot), (x_1, \cdot, y_3), (\cdot, y_2, x_3) \text{ or it is the point }(y_1, x_2, z_3),$$
where $(a, b, \cdot)$ stands for the line $\{(a,b,z)\colon z \text{ arbitrary}\}$, etc. Note that a point on $(x_1, y_2, \cdot)$ is not adjacent to any point on $(\cdot, y_2, x_3)$, and the same holds for lines $(x_1, y_2, \cdot)$ and $(x_1, \cdot, y_3)$. Hence, along out path, a point on the line $(x_1, \cdot, y_3)$ is followed either by a point on $(\cdot, y_2, x_3)$ or the point $(y_1, x_2, z_3)$ (the latter may happen only once). In any case, if $|S| \geq 30$, then among $s_{i_0 + 2}, s_{i_0 + 3}, \dots, s_{i_0 + 20}$, we must get a contiguous sequence $s_{j}, s_{j+1}, \dots, s_{j+7}$ of points
$$s_{j}, s_{j+2}, s_{j+4}, s_{j+6} \in (x_1, \cdot, y_3), s_{j+1},s_{j+3}, s_{j+5}, s_{j+7}  \in (\cdot, y_2, x_3).$$
Finally, we look at $A = s_j, B = s_{j+2}, C = s_{j+5}, D = s_{j+7}$. These four points form an independent set, but $A \not= B$ gives $A_2 \not= B_2$, so one of $A_2 \not= y_2, B_2 \not= y_2$ holds, and similarly, one of $C_1 \not= x_1, D_1 \not= x_1$ holds as well. Choosing a point among $A, B$ and a point among $C, D$ for which equality does not hold gives an edge, which is impossible. \end{proof}

Finally, we study independent sets in $G_3$. Note that Lemma~\ref{genSparse} in this case does not tell us anything about the structure of such sets. When we refer to line or planes, we always think of very specific cases, namely the lines are the sets of the form $\{x\colon x_i = a, x_j = b\}$ and the planes are $\{x\colon x_i = a\}$. Similarly, collinearity and coplanarity of points have stronger meaining, and imply that points lie on a common line or plane defined as above. 

\begin{lemma} Let $S$ be a set of vertices in $G_3$. If every two points of $S$ are collinear, then $S$ is a subset of a line. If every three points of $S$ are coplanar, then $S$ is a subset of a plane. \end{lemma}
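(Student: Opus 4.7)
The plan is to prove the two claims separately. The line statement follows from fixing two distinct points and checking by case analysis which third points are compatible, while the plane statement goes by induction on $|S|$, the key input being a small combinatorial fact about non-empty subsets of a two-element set with pairwise non-empty intersection.

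For the line statement, the cases $|S|\leq 2$ are immediate, so assume $|S|\geq 3$ and pick distinct $s_1,s_2\in S$. They agree in at least two coordinates by hypothesis and in at most two by distinctness, say in coordinates $1$ and $2$ with common values $a,b$; this defines the line $L=\{x\colon x_1=a,\,x_2=b\}$. For any other $s_3\in S$, I split on which two coordinates $s_3$ shares with $s_1$. If they are $\{1,2\}$ then $s_3\in L$. If they are $\{1,3\}$ then $(s_3)_1=a$ and $(s_3)_3=(s_1)_3$; since $s_1,s_2$ differ in coordinate $3$, the pair $s_2,s_3$ does not share coordinate $3$, so it must share coordinate $2$, forcing $(s_3)_2=b$ and hence $s_3=s_1$, a contradiction. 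The case $\{2,3\}$ is symmetric.

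For the plane statement, first observe that when $|S|\geq 3$, the triple hypothesis forces every \emph{pair} in $S$ to share a coordinate: extend the pair to any triple and read off its shared coordinate. Now induct on $|S|$; the base $|S|=3$ is the hypothesis itself. For $|S|\geq 4$, remove any $p\in S$. Applying the induction hypothesis to $S\setminus\{p\}$ (whose triples inherit the hypothesis) gives a coordinate, say coordinate $1$ with value $a$, such that $(s)_1=a$ for every $s\in S\setminus\{p\}$. If $(p)_1=a$ we are done. Otherwise, for each $s\in S\setminus\{p\}$ set $E(s)=\{i\in\{2,3\}\colon (s)_i=(p)_i\}$; this is non-empty because $p,s$ share a coordinate which cannot lie in coordinate $1$. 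For any $q,r\in S\setminus\{p\}$, the triple $\{p,q,r\}$ shares a coordinate, again necessarily in $\{2,3\}$, so $E(q)\cap E(r)\neq\emptyset$. A family of non-empty subsets of $\{2,3\}$ with pairwise non-empty intersection must have a common element, for otherwise some member is $\{2\}$ and another is $\{3\}$, which are disjoint. Thus some $i\in\{2,3\}$ lies in every $E(s)$, giving $(p)_i=(s)_i$ for all $s\in S$, so $S$ is contained in the plane $\{x_i=(p)_i\}$.

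The main obstacle is the plane statement: the ``fix two points and control the rest'' approach that works for lines breaks down, because three points lying on three different planes can coexist without any common plane, so one really does need the triple hypothesis. The induction sidesteps this by leveraging the planar structure of $S\setminus\{p\}$, and the reduction to a pairwise-intersecting family on the two-element set $\{2,3\}$ is exactly what forces a common coordinate.
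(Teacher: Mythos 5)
Your proof is correct. The line part matches the paper's argument essentially step for step (fix a pair sharing two coordinates, show any third point sharing a different pair of coordinates with $s_1$ is forced to equal $s_1$); the paper phrases it slightly more compactly by directly deducing $x_3=z_3=y_3$, but the content is the same.

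The plane part, however, takes a genuinely different route. You induct on $|S|$, reduce to a family $E(s)$ of non-empty subsets of $\{2,3\}$, and invoke a Helly-type fact that pairwise-intersecting non-empty subsets of a two-element set have a common element. The paper's argument is shorter: dispose of the all-pairs-collinear case by the line part, then take a non-collinear pair $x,y$; by the triple hypothesis applied to $\{x,y,z\}$ for any third $z$, the pair $x,y$ must share at least one coordinate, hence exactly one, so the plane through $x,y$ is unique and every $z$ is forced into it. Your remark that ``fix two points and control the rest'' breaks down for planes isn't quite right --- it works perfectly well once you fix a \emph{non-collinear} pair, since such a pair determines a unique plane; that observation is precisely what makes the paper's version a three-line proof. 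What your version buys is independence from the line part and a self-contained inductive scheme, at the cost of the extra machinery. One small remark that applies to both proofs: the statement is vacuous but false for $|S|\le 2$ when the two points are adjacent in $G_3$ (no plane of the admissible form contains them); neither proof treats this edge case, and in the paper the lemma is only invoked for larger $S$, so it is harmless, but your base case should really start at $|S|=3$, which it does.
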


\begin{proof} We first deal with the collinear case. Take any pair of points, $x, y \in S$, w.l.o.g. they coincide in the first two coordinates. Take third point $z \in S$. If $z$ does not share the values of the first 2 coordinates with $x$ and $y$, then we must have $x_3 = z_3 = y_3$, which is impossible. As $z$ was arbitary, we are done.\\
Suppose now that we have all triples coplanar. W.l.o.g. we have a noncolinear pair $x, y$, which only coincide in the first coordinate. Then all other points may only be in the plane $\{p\colon p_1 = x_1\}$.\end{proof}

\begin{lemma}\label{4struct}(Structure of the independent sets of size 4.) Given an independent set $I$ of $G_3$ of size 4 (at least) one of the following alternatives holds
\begin{itemize}
\item[] (\textbf{S1}) $I$ is coplanar, or
\item[] (\textbf{S2}) $I = \{(a,b,c), (a',b',c), (a',b,c'), (a,b',c')\}$, where $a \not= a'; b \not= b'$ and $c \not= c'$, or
\item[] (\textbf{S3}) up to permutation of coordinates $I = \{(a,b,c), (a,b,c'), (a,b',x), (a',b,x)\}$, where $a \not= a'; b \not= b'$ and $c \not= c'$.
\end{itemize}
 \end{lemma}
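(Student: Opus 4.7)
The plan is to split into cases according to how much agreement on a single coordinate is present among the four points of $I$. Let $p_1,p_2,p_3,p_4$ denote the four points; independence in $G_3$ forces each of the $\binom{4}{2}=6$ pairs to coincide in at least one coordinate. I organize the argument as: (A) some coordinate has all four points agreeing; (B) no coordinate has all four agreeing, but some coordinate has exactly three agreeing; (C) no coordinate has three or more points agreeing on the same value. Case (A) is immediate, giving (S1).

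For Case (C), I use a counting argument. For each coordinate $k\in[3]$, the multiset of the four $k$-th coordinates falls into one of three patterns: ``all distinct'' ($0$ same-value pairs), ``one pair plus two singletons'' ($1$ pair), or ``two pairs'' ($2$ pairs). Summing over the three coordinates must produce at least $6$ pairs, since every one of the $\binom{4}{2}=6$ pairs of points in $K_4$ on $\{p_1,p_2,p_3,p_4\}$ must be covered, while each coordinate contributes at most $2$. So the total is exactly $6$, every coordinate must realize the ``two pairs'' pattern, and the three induced perfect matchings of $K_4$ must be the three distinct perfect matchings. Taking coordinate $1$ to realize $\{12,34\}$, coordinate $2$ to realize $\{13,24\}$, and coordinate $3$ to realize $\{14,23\}$, I can read off directly that the four points are $\{(a,b,c),(a,b',c'),(a',b,c'),(a',b',c)\}$, which is the set in (S2).

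For Case (B), after relabeling assume $(p_1)_1=(p_2)_1=(p_3)_1=a$ and $(p_4)_1=a'\neq a$, and write $p_i=(a,b_i,c_i)$ for $i\leq 3$ and $p_4=(a',y,z)$. The non-adjacency of each pair $p_ip_4$ becomes the disjunction ``$y=b_i$ or $z=c_i$.'' Let $T_y=\{i : y=b_i\}$ and $T_z=\{i : z=c_i\}$; then $T_y\cup T_z=\{1,2,3\}$. Since we are not in Case (A), neither $|T_y|$ nor $|T_z|$ can equal $3$ (that would force a coordinate with four agreements). This leaves three possibilities, each of which I verify by writing out the four points explicitly: $|T_y|=2,\,|T_z|=1$ with $T_y\cap T_z=\emptyset$; the symmetric case with $y$ and $z$ swapped; and $|T_y|=|T_z|=2$ with overlap one. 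For instance, in the first subcase one obtains, up to labels, $p_1=(a,b,c_1),\ p_2=(a,b,c_2),\ p_3=(a,b_3,c_3),\ p_4=(a',b,c_3)$ with $b\neq b_3$ and $c_1\neq c_2$, and this is exactly the shape (S3) (with $b'=b_3$, $x=c_3$). The other two subcases match (S3) in the same way, possibly after swapping coordinates $2$ and $3$.

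I expect Case (C) to be the cleanest, driven entirely by the counting argument. The main obstacle will be the bookkeeping in Case (B): checking that each of the three remaining subcases of $(T_y,T_z)$ really matches (S3) up to permutation of coordinates, and confirming that the ``forbidden'' subcase $|T_y|=3$ (or $|T_z|=3$) is genuinely ruled out by the hypothesis that we are not already in Case (A), so that no spurious alternative slips in.
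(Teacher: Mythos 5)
Your proof is correct, and it takes a genuinely different route from the paper in one of the two main cases. The paper first assumes $I$ is not coplanar, then splits on whether a collinear pair exists; your decomposition is by the maximum number of points sharing a value in a single coordinate (four, exactly three, or at most two). Under the standing assumption of non-coplanarity the two decompositions are equivalent — a coordinate with exactly three points agreeing forces a collinear pair among those three via the analysis of $T_y, T_z$, and conversely a collinear pair forces a third point onto one of the two shared coordinates — but the proofs inside the cases differ. For the ``no collinear pair'' case the paper does a hands-on enumeration of where $C$ and $D$ can sit once $A$ and $B$ are normalized, whereas you give a cleaner global counting argument: the $6$ pairs of $K_4$ must each be covered by some coordinate, each coordinate covers at most $2$ pairs when no three points agree, so equality holds everywhere and the three coordinates realize the three perfect matchings of $K_4$, which immediately yields (\textbf{S2}) and also implicitly shows there are no collinear pairs. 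Your Case (B), organized around the sets $T_y$ and $T_z$ with the constraint $T_y \cup T_z = \{1,2,3\}$ and $|T_y|,|T_z| \leq 2$, is in the same spirit as the paper's Case 2 but is somewhat more systematic; the bookkeeping you flagged as the main worry does go through in all three subcases (including $|T_y|=|T_z|=2$, where the resulting configuration has $x=c'$ in the (\textbf{S3}) template, which the lemma permits since it imposes no constraint on $x$). In short: same theorem, different and arguably tidier organization, with the counting argument the main new idea.
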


\begin{proof} Suppose that $I = \{A, B, C, D\}$ is not a subset of any plane. We distinguish between two cases.\\

\noindent\textbf{Case 1.} There are no collinear pairs in $I$.\\
Let $A = (a, b, c)$. But $AB$ is not an edge and not colinear so $A$ and $B$ differ in precisely two coordinates. Thus, w.l.o.g. $B = (a', b', c)$ where $a \not= a'$ and $b \not= b'$. If $C_3$ also equals $c$, then we must have $C_3 = (a'', b'', c)$ with $a''$ different from $a,a'$ and $b''$ from $b, b'$. However, looking at $D$, we cannot have $D_3 = c$ as otherwise $I \subset \{x_3 = c\}$, so $D$ must differ at all three coordinates from one of the points $A, B, C$, making them joined by an edge, which is impossible. Thus $C_3 = c'$, with $c' \not= c$. Since $AC$ and $BC$ are not edges, $C \in \{(a, b', c'), (a', b, c')\}$. The same argument works for $D$, so $D_3 = c'' \not= c$, and $D \in \{(a, b', c''), (a', b, c'')\}$. However, if $c' \not= c''$, then $C, D$ are either collinear or adjacent in $G_3$, which are both impossible. Hence $c'' = c'$, and $\{C, D\} = \{(a,b',c'), (a',b,c')\}$, as desired.\\

\noindent\textbf{Case 2.} W.l.o.g. $A$ and $B$ are collinear.\\
Let $A = (a,b, c), B = (a,b,c')$ with $c \not= c'$. Since $\{x_1 = a\}$ does not contain the whole set $I$, we have w.l.o.g. $C_1 = a' \not = a$. If $C_2 \not= b$, then $AC$ or $BC$ is an edge, which is impossible. Therefore, $C_2 = b$. Hence $D_2 = b' \not= b$, and by similar argument $D_1 = a$. Finally $CD$ is not an edge, so their third coordinate must be the same, proving the lemma.\end{proof}

\begin{lemma}(Structure of the independent sets of size 5.) Given an independent set $I$ of $G_3$ of size 5 (at least) one of the following alternatives holds
\begin{enumerate}
\item $I$ is coplanar, or
\item $I$ is a subset of a union of three lines, all sharing the same point.
\end{enumerate} \end{lemma}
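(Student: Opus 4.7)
My plan is to reduce the lemma to Lemma~\ref{4struct} applied to a well-chosen $4$-subset of $I$. Assume $I$ is not coplanar (otherwise alternative~(1) holds); I aim to establish alternative~(2).

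First, extract a non-coplanar $4$-subset $J \subseteq I$. If every $4$-subset $I \setminus \{p\}$ were contained in some plane $P_p = \{x_{i(p)} = a(p)\}$, then fixing a 4-subset $I \setminus \{p_0\} \subseteq \{x_i = a\}$ (where $p_0 \notin \{x_i = a\}$, else $I$ is coplanar), every other plane $P_q$ for $q \in I \setminus \{p_0\}$ must contain $p_0$ together with three points of $I \setminus \{p_0\}$; intersecting with $\{x_i = a\}$, those triples lie on a line of the form $\{x_i = a\} \cap P_q$, and since the triples from different $q$ overlap in two points, all four points of $I \setminus \{p_0\}$ lie on a common line. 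This forces $I \subseteq \{x_j = b\}$ for the other fixed coordinate of that line, contradicting non-coplanarity. So a non-coplanar $J$ exists, and by Lemma~\ref{4struct} it is of type (S2) or (S3).

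Type (S2) is ruled out directly. If $J = \{(a,b,c), (a',b',c), (a',b,c'), (a,b',c')\}$, any fifth vertex $E = (e_1, e_2, e_3) \in I$ must share a coordinate with each point of $J$. Case-splitting on $e_1 \in \{a, a', \text{other}\}$: when $e_1 = a$, the constraints from the two points with first coordinate $a'$ yield a disjunction whose every consistent resolution either collapses $E$ onto an existing point of $J$ or demands two incompatible equalities such as $b = b'$ or $c = c'$; the case $e_1 = a'$ is symmetric; when $e_1 \notin \{a, a'\}$, pairing $(a,b,c)$ with $(a',b',c)$ forces $e_3 = c$, while pairing $(a',b,c')$ with $(a,b',c')$ forces $e_3 = c'$, contradicting $c \neq c'$. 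So no fifth vertex exists, and since $|I| \geq 5$ this rules out (S2); hence $J$ is of type (S3).

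Write $J = \{(a,b,c), (a,b,c'), (a,b',x), (a',b,x)\}$ (up to a permutation of coordinates), with $a \neq a'$, $b \neq b'$, $c \neq c'$. The key observation is that $J$ sits inside the three coordinate-axis lines through $p := (a, b, x)$, namely $\ell_1 = \{(a, b, \cdot)\}$, $\ell_2 = \{(a, \cdot, x)\}$, and $\ell_3 = \{(\cdot, b, x)\}$. For any $E = (e_1, e_2, e_3) \in I \setminus J$, I split on $e_1$: if $e_1 = a$, sharing with $(a', b, x)$ forces $e_2 = b$ or $e_3 = x$, placing $E$ on $\ell_1$ or $\ell_2$; if $e_1 = a'$, the constraints from $(a,b,c)$ and $(a,b,c')$ (using $c \neq c'$) force $e_2 = b$, and then $(a, b', x)$ forces $e_3 = x$, giving $E = (a',b,x) \in J$, a contradiction; if $e_1 \notin \{a, a'\}$, the same reasoning forces $e_2 = b$ and $e_3 = x$, placing $E$ on $\ell_3$. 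Thus every $E$ lies in $\ell_1 \cup \ell_2 \cup \ell_3$, so $I$ is contained in three concurrent lines through $p$, which is alternative~(2). For $|I| > 5$, the same argument applies to each $E \in I \setminus J$ with the same fixed $J$. The main obstacle is keeping the case analyses for (S2) and (S3) tight, but the very restrictive structure of each $4$-subset reduces every subcase to a single forced equation.
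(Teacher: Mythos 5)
Your proof is correct, and it takes a genuinely different route from the paper's, which I'll sketch for comparison. The paper fixes a non-coplanar \emph{triple} $x_1,x_2,x_3$, applies Lemma~\ref{4struct} to the two quadruples $\{x_1,x_2,x_3,x_4\}$ and $\{x_1,x_2,x_3,x_5\}$, rules out the case ``both \textbf{S2}'' by showing it forces $x_4 = x_5$, normalizes coordinates so that $x_1,\dots,x_4$ is the standard \textbf{S3} picture, and then shows ``\textbf{S2} for $\{x_1,x_2,x_3,x_5\}$'' forces $x_5=(1,1,1)$, which is adjacent to $x_4$ -- hence both quadruples are \textbf{S3} and the three lines emerge. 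You instead work with a \emph{single} non-coplanar quadruple $J$ and make the sharper observation that an \textbf{S2} configuration admits no fifth independent vertex at all; this kills \textbf{S2} outright with no need to compare two overlapping quadruples or to pick coordinates. The remaining \textbf{S3} analysis (placing any further $E$ on one of the three concurrent lines through $p=(a,b,x)$) is clean and matches the conclusion directly, and your remark about $|I|>5$ is fine. One stylistic point: your argument for the existence of a non-coplanar 4-subset is more elaborate than necessary -- the preceding lemma in the paper already gives you a non-coplanar \emph{triple} (since if every triple were coplanar, $I$ would be coplanar), and adjoining any fourth point of $I$ then yields a non-coplanar quadruple immediately. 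Your contradiction argument does work, but it re-derives a weaker consequence of a tool you already have on hand.
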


\begin{proof} List the vertices of $I$ as $x_1, x_2, x_3, x_4, x_5$. W.l.o.g. $x_1, x_2, x_3$ are not coplanar. By the previous lemma, $\{x_1, x_2, x_3, x_i\}$  for $i=4,5$ may have structure \textbf{S2} or \textbf{S3}. But if both structures are \textbf{S2}, then we must have that in both quadruples, at each coordinate, each value appears precisely two times. This implies $x_4 = x_5$. Hence, w.l.o.g. $\{x_1, x_2, x_3, x_4\}$ has structure \textbf{S3}. Therefore, assume w.l.o.g. that
$$x_1 = (1,0,0), x_2 = (0,1,0), x_3 = (0,0,1), x_4 = (0, 0, c')$$
for some $c' \not= 1$ (which corresponds to the choice $a = 0, a' = 1, b = 0, b' = 1, x = 0, c = 1$ in the previous Lemma, switching the roles of  $c$ and $c'$ if necessary). Looking at $\{x_1, x_2, x_3, x_5\}$, if it had \textbf{S2} for its structure, we would get $x_5 = (1,1,1)$, which is adjacent to $x_4$, and thus impossible. Hence $\{x_1, x_2, x_3, x_5\}$ also has structure \textbf{S3}. Permutting the coordinates only permutes $x_1, x_2, x_3$, and does not change the number of zeros in $x_5$. Thus, w.l.o.g.
$$\{(1,0,0), (0,1,0), (0,0,1), x_5\} = \{x_1, x_2, x_3, x_5\} = \{(d,e,f), (d,e,f'), (d',e,y), (d,e',y)\},$$
for some $d \not= d', e \not= e', f \not= f'$. But in the first coordinate, only zero can appear three times, so $d = 0$. Similarly, $e = 0$, so $x_5 \in (0,0, \cdot)$, after a permutation of coordinates. Thus $x_5$ has at least 2 zeros, so our independent set $I$ is a subset of the union of lines passing through the point $(0,0,0)$, as required.\end{proof}

\section{Conjecture~\ref{connConj} for 4 colours}

In this short section we reprove the result of Gy\'arf\'as.

\begin{theorem} (Gy\'arf\'as) Conjecture~\ref{connConj} for 4 colours and Conjecture~\ref{altConj} for $G_3$ are true.\label{connThm}\end{theorem}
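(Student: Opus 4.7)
The plan is to use the proposition just proved to reduce Conjecture~\ref{connConj} for $k=4$ to Conjecture~\ref{altConj} for $l=3$, and to work in the latter, more geometric setting. Fix a finite $X\subseteq\mathbb{N}_0^3$ and let $C_1,\dots,C_m$ be the connected components of $G_3[X]$. If $m\leq 3$, the $C_i$ themselves (padded by empty sets if $m<3$) supply the required cover; from now on I assume $m\geq 4$. Pick any representative $v_i\in C_i$. Because two points in distinct components of $G_3[X]$ cannot be $G_3$-adjacent, they must share a coordinate, so $V=\{v_1,\dots,v_m\}$ is an independent set in $G_3$ of size at least $4$.

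The main idea is to feed $V$ into the structural results of the previous section. For $m\geq 5$, the size-$5$ structure lemma tells us that $V$ is either coplanar, i.e.\ contained in some $\{x_i=c\}$, or lies in the union of the three coordinate lines through a common point $(a,b,c)$; in the latter case $V\subseteq\{x_1=a\}\cup\{x_2=b\}\cup\{x_3=c\}$, a union of three planes. For $m=4$, Lemma~\ref{4struct} gives the trichotomy S1 (coplanar), S2 ($V$ covered by two parallel planes $\{x_1=a\}\cup\{x_1=a'\}$, up to a permutation of coordinates), or S3 (three of the four representatives lying on two coordinate lines through a common point). In each case $V$ is covered by at most three coordinate planes, and sometimes by as few as one or two.

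The lifting step is then to promote a plane-cover of $V$ to a cover of all of $X$. The strategy is: if some $C_j$ contains a vertex $u$ outside the union of planes produced by the structural lemma, swap the representative $v_j$ for $u$ and rerun the classification on the new independent set. Each such swap either shrinks the problem (a representative moves into an already-present plane, or the configuration jumps to a strictly simpler structural case such as from S3 to S1), or it forces a $G_3$-edge between distinct components, contradicting the independence of $V$. When the iteration terminates with only two planes available (the typical outcome of S2), the third set in the cover is chosen to be an entire component $C_{j_0}$ used as the $G_3$-connected piece, with the two planes absorbing everything else.

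The main obstacle is precisely this lifting from $V$ to $X$: the structural lemmas constrain only the representatives, and components may a priori contain vertices far from their representatives. The delicate case is S2, where only two of the three pieces of the cover are planes and the third must be chosen as a component that absorbs all remaining points; this is where the iterative representative swaps, together with the structural control from Lemma~\ref{4struct} (and its size-$5$ extension), do the real work in converting the purely combinatorial control on $V$ into a geometric cover of $X$.
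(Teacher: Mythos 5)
Your high-level plan matches the paper's: pass to $G_3$, take representatives of components, observe they form an independent set in $G_3$, and invoke the structure lemmas for independent sets of size $4$ and $5$. The difficulty, which you correctly flag, is the ``lifting'' step from the representatives $V$ to the whole of $X$ — but you describe this step rather than prove it, and the description does not constitute an argument.

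Concretely, here is what goes wrong. Your iterative-swap strategy asserts that swapping a representative for an uncovered vertex $u \in C_j$ either shrinks the problem, jumps to a strictly simpler structure, or produces a $G_3$-edge contradicting independence. None of these outcomes is forced by what you have proved: the new complete representative set is again independent and can again land in the same structural case with a different plane configuration, and you give no potential function or invariant showing the process terminates in something usable. The mechanism that actually does the work in the paper is different and more rigid. In the ``more than four components'' case, once one picks a non-coplanar triple $x_1,x_2,x_3$ of representatives, the common point $p$ and the three lines $l_1,l_2,l_3$ of the size-$5$ structure lemma are \emph{uniquely determined by $x_1,x_2,x_3$ alone} (because $p_i$ must be the value occurring twice among $(x_1)_i,(x_2)_i,(x_3)_i$). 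This holds for every completion $\{x_1,x_2,x_3,y_4,\dots,y_r\}$, so immediately the \emph{entire} components $C_4,\dots,C_r$ (not just one representative each) lie in $l_1\cup l_2\cup l_3$; combined with the observation that three collinear representatives yield a two-plane cover, one deduces $p\notin X$ and, after a single more swap, that $C_2,\dots,C_r$ are singletons. This determinism is what converts control on representatives into control on $X$, and it is absent from your sketch. You also omit the paper's Case~2 (exactly four components with a coplanar complete representative set), which is not an instance of \textbf{S2} or \textbf{S3} at all and is handled by a separate coordinate-counting argument showing three planes cover $X$; nothing in your trichotomy speaks to it. Finally, for the \textbf{S2}/\textbf{S3} dichotomy your remark that \textbf{S2} gives two parallel planes is accurate for the four representatives but not for $X$; the paper's resolution is the different (and nontrivial) fact that if \textbf{S2} were the structure of every complete representative set, then all four components would be singletons. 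As written, the proposal identifies the right toolbox but leaves the central implication unproved.
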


\begin{proof} By the equivalence of conjectures, it suffices to prove Conjecture~\ref{altConj} for $G_3$. Let $X$ be the given finite set of vertices in $G_3$. Assume that $G_3[X]$ has at least 4 components, otherwise we are done immediately. By a \emph{representatives set} we mean any set of vertices that contains at most one vertex from each component of $X$. A \emph{complete representative set} is a representative set that intersects every component of $X$.

\begin{observation} If there are three colinear points, each in different component, then $X$ can be covered by two planes. In particular, if two planes do not suffice, then among every three points in different components, there is a non-colinear pair.\end{observation}

\begin{proof} W.l.o.g. these are points $(0, 0, 1), (0,0,2), (0,0,3)$. Then, unless $X \subset \{x_1 = 0\} \cup \{x_2 = 0\}$, we have a point of the form $(a,b,c)$ with $a,b$ both non-zero, so it is a neighbour of at least two of the points we started with, contradicting the fact that they belong to different components. For the second part, recall that if every pair in a triple is colinear, then the whole triple lies on a line.\end{proof}

By the observation above, every representative set of size at least 3 has a noncollinear pair. Suppose firstly that every complete representative set is a subset of a plane. Pick a complete representative set $\{x_1, x_2, \dots, x_r\}$, with $x_i \in C_i$, where $C_i$ are the components. W.l.o.g. $x_1, x_2$ is a noncollinear pair, therefore, it determines a plane $\pi$, forcing components $C_3, C_4, \dots, C_r$ to be entirely contained in this plane. Hence, we may cover the whole set $X$ by components $C_1$ and $C_2$, and the plane $\pi$. Therefore, we may assume that we have a representative set of size three which does not lie in any plane.\\

\noindent\textbf{Case 1.} $X$ has more than 4 components.\\
\indent Let $x_1, x_2, x_3$ be a representative set, $x_i \in C_i$, which is not coplanar. Then, for any choice of $y_4, \dots, y_r$, such that $\{x_1, x_2, x_3, y_4, \dots, y_r\}$ is a complete representative set, we have 3 lines  that meet in a single point, that contain all these points. Observe that this structure is determined entirely by $x_1, x_2, x_3$. Indeed, since these three points are not coplanar, they cannot coincide in any coordinate. However, since there are at least 5 components, $x_1, x_2, x_3$ extend to an independent set of size 5, which must be a subset of three lines sharing a point $p$. But we can identify $p$, since $p_i$ must be the value that occurs precisely two times among $(x_1)_i, (x_2)_i, (x_3)_i$, and hence the lines are $l_1 = px_1, l_2 = px_2, l_3 = px_3$. Thus, the union of lines $l_1, l_2, l_3$ contains whole components $C_4, \dots, C_r$ and $x_i \in l_i$. By the Observation above, each $l_i$ has representatives from at most two components. Hence, we may not have the common point of the three lines $p$ present in  $X$, as otherwise some line $l_i$ would have three components meeting it. W.l.o.g. $l_2, l_3$ intersect two components, and $l_1$ may intersect 1 or 2. Then, picking any $y \in l_2$ in a different component than that of $x_2$ and any $z \in \l_3$ with a component different from that of $x_3$, using the argument above applied to $\{x_1, y, z\}$ instead of $\{x_1, x_2, x_3\}$, we deduce that $C_2 \subset l_2$, $C_3 \subset l_3$. Thus, we actually have singleton components $C_2, C_3, \dots, C_r$. Finally, any point in $C_1$ must be either in the plane of $l_2, l_3$ or on the line $l_1$, so we can cover by two planes.\\

\noindent\textbf{Case 2.} $X$ has precisely 4 components and there exists a coplanar complete representative set.\\
\indent Let $x_1, x_2, x_3, x_4$ be a complete representative set, with $x_i \in C_i$. W.l.o.g. we have $x_i = (a_i, b_i, 0)$. As a few times before, we do not have a collinear triple among these 4 points, so each of the sequences $(a_i)_{i=1}^4$ and $(b_i)_{i=1}^4$ has the property that a value may appear at most twice in the sequence.\\
Suppose for a moment that each of these two sequences has at most one value that appears twice. Write $v$ for the value that appears two times in $(a_i)$, if it existis, and let $v$ be the corresponding value for $(b_i)$. If we take a point $y$ outside the plane $(\cdot, \cdot, 0)$, then the number of appearances of $y_1$ in $(a_i)$ and $y_2$ in $(b_i)$ combined is at least three. So, either $y_1$ is the unique doubly-appearing value $u$ for $a_i$ or is $y_2 = v$, so the three planes $(u, \cdot, \cdot), (\cdot, v, \cdot)$ and $(\cdot, \cdot, 0)$ cover $X$.\\
Now, assume that w.l.o.g. has two doubly-appearing values, i.e. $a_1 = a_2 = u \not= a_3 = a_4 = v$. If $y$ is outside the plane $(\cdot, \cdot, 0)$, then if $y_1 \not= u$, one of the pairs $x_1y, x_2 y$ must be an edge, so $x_3y$ and $x_4y$ are not edges, so we must have $y_1 = v$. Similarly, if $y$ is outside the plane  $(\cdot, \cdot, 0)$ and $y_1 \not= v$, then $y_1 = u$. Hence, for all points $y \in X$, we have $y_1 \in \{u,v\}$ or $y_3 = 0$, and three planes cover once again.\\

\noindent\textbf{Case 3.} $X$ has precisely 4 components, but no complete representative set is coplanar.\\
\indent Thus, by Lemma~\ref{4struct}, every complete representative set has either \textbf{S2} or \textbf{S3} as its structure. Observe that if \textbf{S2} is always the structure, then all the components are singleton, and we are done by taking a plane to cover two vertices. So, there is a representative set with structure \textbf{S3}. Take such a representative set $x_1, x_2, a, b$, w.l.o.g. $x_1 = (1,0,0), x_2 = (2,0,0)$. Take any $y$ that shares the component with $a$, and any $z$ that shares the component with $b$. Then, $x_1, x_2, y, z$ is also a complete representative set, so it is not coplanar. But, as $x_1, x_2$ are collinear, it may not have structure \textbf{S2}, so the structure must be \textbf{S3}, which forces $y_1 = z_1$. Hence, we can cover $X$ by components of $x$ and $y$ and the plane $(a_1, \cdot, \cdot)$. This completes the proof.\end{proof}


Note that the theorem is sharp -- we can take $X = \{0, e_1, e_2, e_3, e_1 + e_2, e_1 + e_3, e_2 + e_3\}$, where $e_1 = (1,0,0), e_2 = (0,1,0), e_3 = (0,0,1)$.

\section{Conjecture~\ref{bddConj} for 4 colours}

Recall, by a \emph{diameter} of a colour $c$, written $\operatorname{diam}_c$, we mean the maximal distance between vertices sharing the same component of $G[c]$. In the remaining part of the paper, for a given 4-colouring $\chi\colon E(K_n) \to 4$, we say that $\chi$ \emph{satisfies Conjecture~\ref{bddConj} with (constant) $K$} if there are sets $A_1, A_2, A_3$ whose union is $[n]$ and colours $c_1, c_2, c_3$ such that each $K_n[A_i, c_i]$ is connected and of diameter at most $K$. Thus, our goal can be phrased as: there is an absolute constant $K$ such that every 4-colouring $\chi$ of $E(K_n)$ satisfies Conjecture~\ref{bddConj} with $K$.\\ 
We begin the proof of the main result by observing that essentially we may assume that at least two colours have arbitrarily large diameters. We argue by modifying the colouring slightly.

\begin{lemma}\label{3smalldiam} Suppose $\chi$ is a 4-colouring of $E(K_n)$ such that three colours have diameters bounded by $N_1$. Then $\chi$ satisfies Conjecture~\ref{bddConj} with $\max \{N_1, 30\}$. \end{lemma}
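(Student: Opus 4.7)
The plan is to invoke Theorem~\ref{connThm} (the 4-colour case of Conjecture~\ref{connConj}, reproved in the paper via Conjecture~\ref{altConj} for $G_3$) and translate its covering back to $K_n$, using Lemma~\ref{g3path} to control any piece that arises in the $c_4$-component direction.

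Write $c_4$ for the fourth colour and $c_1, c_2, c_3$ for the three colours with diameters bounded by $N_1$. For each $j \in \{1,2,3\}$ enumerate the $c_j$-components as $C^{(c_j)}_1, C^{(c_j)}_2, \dots$, and define $\phi \colon [n] \to \mathbb{N}_0^3$ by $\phi(v) = (i_1, i_2, i_3)$, where $v \in C^{(c_j)}_{i_j}$. Set $X = \phi([n])$ and $C_x = \phi^{-1}(x)$. Theorem~\ref{connThm}, in its $G_3$ formulation, provides a cover $X = A_1 \cup A_2 \cup A_3$ where each $A_i$ is either contained in a hyperplane $\{x_j = v\}$ or has $G_3[A_i]$ connected.

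If $A_i \subseteq \{x_j = v\}$, I would take $B_i := C^{(c_j)}_v$ (the entire component), which has diameter at most $N_1$ in colour $c_j$ and contains $\bigcup_{x \in A_i} C_x$. The real work is in the other case, $|A_i| \geq 2$ with $G_3[A_i]$ connected; here I would set $B_i := \bigcup_{x \in A_i} C_x$ and argue that $K_n[B_i, c_4]$ has diameter at most $29$. Since shortest paths in any graph are induced, the diameter of $G_3[A_i]$ is bounded by the length of the longest induced path in $G_3[A_i]$, and Lemma~\ref{g3path} caps the latter at $29$. A shortest $G_3$-path $x_0, x_1, \dots, x_m$ in $A_i$ (with $m \leq 29$) then lifts to a $c_4$-path in $B_i$: consecutive $x_{k-1}, x_k$ differ in every coordinate, hence every edge between $C_{x_{k-1}}$ and $C_{x_k}$ is coloured $c_4$, and choosing intermediate vertices $u_k \in C_{x_k} \subseteq B_i$ produces a $c_4$-path of length $m$ contained in $B_i$. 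For $u, v \in B_i$ with $\phi(u) = \phi(v)$, the fact that $A_i$ is connected with $|A_i| \geq 2$ guarantees a $G_3$-neighbour $x'$ of this common point in $A_i$, and any $w \in C_{x'} \subseteq B_i$ yields a $c_4$-path $u, w, v$ of length $2$.

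Putting the two cases together, $B_1 \cup B_2 \cup B_3 \supseteq \bigcup_{x \in X} C_x = [n]$, and each $B_i$ is connected in a single colour with diameter at most $\max\{N_1, 29\} \leq \max\{N_1, 30\}$. The delicate point to verify is that the lifted $c_4$-paths stay inside $B_i$, which is automatic because the intermediate vertices can always be chosen from some $C_{x_k}$ with $x_k \in A_i$; this is also the only place where the hypothesis that only $c_4$ can fail to have small diameter is really used, to ensure that the single-colour-choice $c_4$ for the $G_3$-connected pieces is legitimate.
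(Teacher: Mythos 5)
Your proof is correct and in the same spirit as the paper's (both rely on Theorem~\ref{connThm} together with Lemma~\ref{g3path}), but the packaging is cleaner. The paper first modifies the colouring $\chi$ into $\chi'$ by recolouring every $c_4$-edge whose endpoints happen to share a $c_j$-component for some $j\in\{1,2,3\}$, argues that this forces the 4-diameter of $\chi'$ to be at most $30$ via Lemma~\ref{g3path}, applies Theorem~\ref{connThm} to $\chi'$, and then transfers the resulting cover back to $\chi$ by observing that the 1,2,3-components are unchanged while 4-distances can only shrink. You skip the modification entirely: you pass directly to the auxiliary set $X\subseteq\mathbb{N}_0^3$ of component-index triples, invoke the $G_3$ form of Theorem~\ref{connThm} (i.e.\ Conjecture~\ref{altConj}), take the full $c_j$-component in the hyperplane case (which makes the $N_1$ bound automatic) and lift the $G_3$-connected pieces, bounding their $c_4$-diameter by noting that shortest paths are induced and applying Lemma~\ref{g3path} directly to $G_3[A_i]$. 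This sidesteps the slightly delicate point in the paper that one must take the $A_i$ from Theorem~\ref{connThm} to be whole monochromatic components to get a diameter bound. One small remark: your closing sentence about where the hypothesis is ``really used'' is misplaced --- the legitimacy of assigning $c_4$ to the $G_3$-connected pieces is automatic from the component construction; the bounded-diameter hypothesis on colours $1,2,3$ enters only in the hyperplane case, where you bound the diameter of the component $C^{(c_j)}_v$ by $N_1$.
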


\begin{proof} Write $G = K_n$, and observe that if a point does not receive all 4 colours at its edges, we are immediately done. Let $\chi$ be the given colouring of the edges, and let colours 1, 2 and 3  have diameter bounded by $N_1$. We begin by modifying the colouring slightly. Let $xy$ be any edge coloured by colour 4. If $x$ and $y$ share the same component in $G[c]$ for some $c \in \{1,2,3\}$, change the colour of $xy$ to the colour $c$ (if there is more than one choice, pick any). Note that such a modification does not change the monochromatic components, except possibly shrinking the components for the colour 4. Let $\chi'$ stand for the modified colouring.\\
Observe that the diameter of colour 4 in $\chi'$ is also bounded. Begin by listing all the components for colours $i \in \{1,2,3\}$ as $C^{(i)}_1, C^{(i)}_2, C^{(i)}_3, \dots$. For $x \in \mathbb{N}^3$, consider the sets $C_x = C_{x_1, x_2, x_3} = C^{(1)}_{x_1} \cap C^{(2)}_{x_2} \cap C^{(3)}_{x_3}$. Let $X$ be the set of all $x$ such that $C_x\not=\emptyset$. If $G^{(\chi')}[4]$ (where the superscript indicates the relevant colouring) has an induced path $v_1, v_2, \dots, v_r$, then if $x_i \in \mathbb{N}^3$ is defined to be such that $v_i \in C_{x_i}$, in fact $x_1, x_2, \dots, x_r$ becomes an induced path in $G_3$. But Lemma~\ref{g3path} implies that $r \leq 30$. Hence, the 4-diameter in the colouring $\chi'$ is at most 30.\\
Applying Theorem~\ref{connThm} for the colouring $\chi'$, gives three monochromatic components that cover the vertex set, let these be $G^{(\chi')}[A_1, c_1], G^{(\chi')}[A_2, c_2], G^{(\chi')}[A_3, c_3]$, where the superscript indicates the relevant colouring. Using the same sets and colours, but returning to the original colouring, we have that $G^{(\chi)}[A_1, c_1], G^{(\chi)}[A_2, c_2], G^{(\chi)}[A_3, c_3]$ are all still connected, as 1, 2 and 3-components are the same in $\chi$ and $\chi'$, while there can only be more 4-coloured edges in the colouring $\chi$. Also, 1, 2 and 3-diameters are bounded by $N_1$, and 4-diameters of sets may only decrease when returning to colouring $\chi'$, so the lemma follows.\end{proof}

Let us introduce some additional notions. Let $P \subset \mathbb{N}_0^2$ be a set, and let $L\colon P \to \mathcal{P}(n)\setminus \{\emptyset\}$ be a function with the property that $\{L(A) \colon A \in P\}$ form a partition of $[n]$ and there a two colours $c_3, c_4$ \footnote{This choice of indices was chosen on purpose -- we shall first use colours $c_1, c_2$ to define $P$ and $L$, and the remaining colours will be $c_3$ and $c_4$.}such that whenever $A, B \in P$ and $|A_1 - B_1|, |A_2 - B_2| \geq 2$, then all edges between the sets $L(A)$ and $L(B)$ are coloured with $c_3$ and $c_4$ only. We call $L$ the \emph{$c_3, c_4$-layer mapping} and we refer to $P$ as the \emph{layer index set}. Further, we call a subset $S \subset P$ a \emph{$k$-distant set} if for every two distinct points $A, B \in S$ we have $|A_1 - B_1|,|A_2 - B_2| \geq k$.\\
Let us briefly motivate this notion. Suppose that $K_n[c_1]$ and $K_n[c_2]$ are both connected. Fix a vertex $x_0$ and let $P = \{(d_{c_1}(x_0, v), d_{c_2}(x_0, v))\colon v \in [n]\} \subset \mathbb{N}_0^2$. Let $L(A) \colon= \{v \in [n]\colon (d_{c_1}(x_0, v), d_{c_2}(x_0, v)) = A\}$ for all $A \in P$ (this also motivates the choice of the letter $L$, we think of $L(A)$ as a \emph{layer}). Then, if $x \in L(A), y \in L(B)$ for $A, B \in P$ with $|A_1 - B_1| \geq 2, |A_2 - B_2| \geq 2$, by triangle inequality, we cannot have $d_{c_1}(x,y) \leq 1$ nor $d_{c_2}(x,y) \leq 1$, so $xy$ takes either the colour $c_3$ or the colour $c_4$. As we shall see, we may have more freedom in the definition of $P$ and $L$ if there is more than one component in a single colour.\\
We now explore these notions in some detail, before using them to obtain some structural results on the 4-colourings that possibly do not satisfy Conjecture~\ref{bddConj}.

\begin{lemma}\label{dist3setLemma} Let $\chi$ be a 4-colouring, $L$ a \emph{$c_3, c_4$-layer mapping} with layer index set $P$, and suppose that $\{A, B, C\} \subset P$ is a $3$-distant set. Write $G = K_n$. Then the following hold.
\begin{enumerate}
\item For some colour $c \in \{c_3, c_4\}$ we have $G[L(A) \cup L(B) \cup L(C), c]$ connected and of diameter at most 20.
\item If additionally for $c'$ such that $\{c,c'\} = \{c_3,c_4\}$ and some distinct $A', B' \in \{A, B, C\}$ we have $G[L(A')\cup L(B'), c']$ contained in a subgraph $H \subset G[c']$ that is connected and of diameter at most $N_3$, then the given colouring satisfies Conjecture~\ref{bddConj} with $\max\{40, N_3 + 20\}$.
\end{enumerate} \end{lemma}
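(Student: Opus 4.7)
For the first part, the plan is to exploit the $3$-distance of $\{A, B, C\}$ to see that every edge between different classes among $L(A), L(B), L(C)$ has colour in $\{c_3, c_4\}$, directly from the definition of a $c_3, c_4$-layer mapping (we only need $2$-distance for this). The complete tripartite graph on these classes is therefore $2$-coloured, so Lemma~\ref{mult2col} with $r = 3$ (giving $C_3 = 20$) supplies a colour $c \in \{c_3, c_4\}$ for which the tripartite $c$-subgraph is connected of diameter at most $20$. Since any within-class $c$-edges can only decrease distances, $G[L(A) \cup L(B) \cup L(C), c]$ is connected of diameter at most $20$.

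For the second part, set $V = L(A) \cup L(B) \cup L(C)$, and without loss of generality take $\{A', B'\} = \{A, B\}$, so $V(H) \supseteq L(A) \cup L(B)$. The plan is to exhibit three covering sets (denoted $U_1, U_2, U_3$ to avoid notational conflict with the coordinates of $A, B, C$):
\begin{itemize}
\item $U_1 = V(H) \cup \{v \notin V(H) : v \text{ is } c'\text{-adjacent to some element of } V(H)\}$, assigned colour $c'$;
\item $U_2 = V \cup \{v \notin V : v \text{ is } c\text{-adjacent to some element of } V\}$, assigned colour $c$;
\item $U_3 = L(C) \cup W$, assigned colour $c'$, where $W = \{v \notin L(C) : \text{every edge } vw \text{ with } w \in L(C) \text{ has colour } c'\}$ (if $W = \emptyset$, replace $U_3$ by an arbitrary single vertex).
\end{itemize}
The diameter bounds are routine: via the standard ``one $c'$-step in, at most $N_3$ steps through $H$, one $c'$-step out'' argument, $U_1$ has $c'$-diameter at most $N_3 + 2$; similarly $U_2$ has $c$-diameter at most $20 + 2 = 22$ using part 1; and when $W$ is non-empty, any fixed $v_0 \in W$ is $c'$-adjacent to every element of $L(C)$, making $U_3$ a $c'$-star of diameter at most $2$. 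All three bounds are at most $\max\{40, N_3 + 20\}$.

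The main content is verifying $U_1 \cup U_2 \cup U_3 = [n]$, and this is where the main obstacle lies. Clearly $V \subseteq U_2$, so consider $v \in L(X)$ for some $X \in P \setminus \{A, B, C\}$. The $3$-distance of $\{A, B, C\}$ gives a counting observation: since $A_1, B_1, C_1$ are pairwise at distance at least $3$, at most one of them lies within distance $1$ of $X_1$, and the same holds for the second coordinate; hence at most two of $A, B, C$ fail to have $|X_1 - \cdot_1|, |X_2 - \cdot_2| \geq 2$, leaving at least one $Y \in \{A, B, C\}$ with $|X_1 - Y_1|, |X_2 - Y_2| \geq 2$. For such $Y$, the layer-mapping property forces every edge from $v$ to $L(Y)$ to be coloured in $\{c, c'\}$. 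If some such $Y$ can be chosen in $\{A, B\}$, then $L(Y) \subseteq V \cap V(H)$, and a single $c$-coloured edge from $v$ to $L(Y)$ places $v \in U_2$, while otherwise all those edges are $c'$-coloured and $v \in U_1$. The genuinely delicate case — the one that forces the $U_3$ construction — is when the only valid $Y$ is $C$: then edges from $v$ to $L(C) \subseteq V$ are each $c$ or $c'$, so either some $c$-edge appears (giving $v \in U_2$) or all are $c'$, in which case $v \in W \subseteq U_3$. The set $U_3$ is designed precisely to absorb these ``$C$-dependent'' vertices, and handling them is where the main difficulty resides.
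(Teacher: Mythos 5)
Your proof of part (1) matches the paper's: both apply Lemma~\ref{mult2col} to the complete tripartite graph on $L(A), L(B), L(C)$, whose cross-edges are forced into $\{c_3, c_4\}$ by $2$-distance. For part (2), the overall plan is shared — three covering sets in colours $c'$, $c$, $c'$ built around $V(H)$, $V = L(A)\cup L(B)\cup L(C)$, and $L(C)$ respectively, together with the pigeonhole observation that every layer index is $2$-distant from at least one of $A,B,C$ — but your execution differs from the paper's in a genuine and instructive way. The paper applies Lemma~\ref{2colsBip} to each pair $L(D) \cup L(E)$ (where $E$ is a chosen $2$-distant member of $\{A,B,C\}$) and classifies whole layers $L(D)$ into $P', P'', P'''$ according to the outcome; you instead close $V(H)$, $V$, and $L(C)$ directly under simple adjacency conditions ($c'$-adjacency to $V(H)$, $c$-adjacency to $V$, and ``all edges to $L(C)$ are $c'$'' respectively) and argue vertex-by-vertex that every vertex lands in some $U_i$ by inspecting the colours of its edges into a single suitable $L(Y)$. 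Your route avoids a second invocation of Lemma~\ref{2colsBip} altogether, is more robust (it sidesteps the degenerate case of Lemma~\ref{2colsBip} in which one of the partition parts is empty and a whole layer need not be uniformly $c$-close to $V$), and yields slightly sharper diameter bounds ($N_3+2$, $22$, $2$ as against the paper's $40$, $N_3+20$, $20$), though of course both suffice for $\max\{40, N_3+20\}$. The argument is correct as written.
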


\begin{proof}[Proof of Lemma~\ref{dist3setLemma}.] 
\textbf{(1):} Observe that all edges between $L(A), L(B), L(C)$ are of colours $c_3$ and $c_4$. This is a complete tripartite graph and by Lemma~\ref{mult2col} w.l.o.g. $L(A) \cup L(B) \cup L(C)$ is $c_3$-connected and of $c_3$-diameter at most 20.\\
\textbf{(2):} W.l.o.g. $A' = A, B' = B$. Pick any $D \in P$. Note that since $A, B, C$ are 3-distant, $D$ is 2-distant from at least one of $A, B, C$ (otherwise, by pigeonhole principle, for some $A', B'$ among $A, B, C$ and some index $i$, we have $|A'_i - D_i|, |B'_i - D_i| \leq 1$, so $|A'_i - B'_i| \leq 2$, which is impossible). Let $E \in \{A, B, C\}$ be such that $D, E$ are $2$-distant. Thus, all the edges between $L(D)$ and $L(E)$ are of colours $c_3$ and $c_4$, so Lemma~\ref{2colsBip} applies to $L(D) \cup L(E)$.\\
Let $P' \subset P$ be the set of all $D \in P$ such that Lemma~\ref{2colsBip} gives that either $L(D) \cup L(E)$ is $c$-connected and of $c$-diameter at most 10, or the second conclusion of that lemma holds. Hence, every vertex $x$ in $L(D)$ for some $D \in P'$ is on $c$-distance at most 10 to a vertex in $L(A) \cup L(B) \cup L(C)$, which itself has $c$-diameter at most 20. Hence, $L(A) \cup L(B) \cup L(C) \cup (\cup_{D \in P'} L(D))$ is $c$-connected and of $c$-diameter at most 40.\\
For all other $D \in P\setminus P'$, Lemma~\ref{2colsBip} applied to $L(D) \cup L(E)$ for a relevant $E$ implies that $L(D) \cup L(E)$ is $c'$-connected and of diameter at most 10. Let $P''$ be the set of $D \in P\setminus P'$ for which $E \in \{A, B\}$, and let $P''' = P \setminus (P' \cup P'')$ (for which therefore $E = C$). Hence, $H \cup (\cup_{D \in P''} L(D))$ is $c'$-connected and of $c'$-diameter at most $N_3 + 20$, and finally $L(C)\cup (\cup_{D \in P'''} L(D))$ is also $c'$-connected and of $c'$-diameter at most 20. Hence, taking
\begin{equation*}\begin{gathered}
G[L(A) \cup L(B) \cup L(C) \cup (\cup_{D \in P'} L(D)), c],\\
H \cup G[(\cup_{D \in P''} L(D)), c'], \text{ and}\\
G[L(C)\cup (\cup_{D \in P'''} L(D)), c'],\\
\end{gathered}\end{equation*}
proves the lemma.\end{proof}

\begin{lemma} \label{3distLemma}Suppose that $\chi$ is a 4-colouring of $E(K_n)$ and that $L$ is a $c_3, c_4$-layer mapping for some colours $c_3, c_4 \in [4]$ with a $3$-distant set of size at least 4. Then $c$ satisfies Conjecture~\ref{bddConj} with constant 160. \end{lemma}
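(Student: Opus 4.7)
The plan is to reduce to Lemma~\ref{dist3setLemma}(2) in all but one case, where we instead cover $[n]$ by a single monochromatic set. Let $\mathcal{S}=\{A_1,A_2,A_3,A_4\}\subseteq P$ be the 3-distant set of size 4; for each 3-subset $T\subset\mathcal{S}$ write $\mathcal{C}_T\subseteq\{c_3,c_4\}$ for the (non-empty, by Lemma~\ref{dist3setLemma}(1)) set of colours $c$ such that $L(T):=\bigcup_{A\in T}L(A)$ is $c$-connected of diameter at most 20. The key combinatorial fact is that any two 3-subsets of a 4-set share at least two elements, which lets us reuse one triple as the base of Lemma~\ref{dist3setLemma}(2) and the other as the auxiliary set $H$.

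\emph{Case I:} there exist (possibly equal) triples $T_1,T_2\subset\mathcal{S}$ and colours $c\in\mathcal{C}_{T_1}$, $c'\in\mathcal{C}_{T_2}$ with $\{c,c'\}=\{c_3,c_4\}$. Picking any $A',B'\in T_1\cap T_2$, apply Lemma~\ref{dist3setLemma}(2) to $T_1$ with colour $c$ and with $H=K_n[L(T_2),c']$: this $H$ is $c'$-connected of diameter at most 20 and contains $L(A')\cup L(B')$, so the lemma outputs Conjecture~\ref{bddConj} with constant $\max\{40,20+20\}=40\leq 160$. This single case covers both the scenario of one triple admitting both colours and that of two triples with different singleton $\mathcal{C}_T$'s.

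\emph{Case II:} $\mathcal{C}_T=\{c_3\}$ for every 3-subset $T\subset\mathcal{S}$ (WLOG), so in particular $L(\mathcal{S})$ itself is $c_3$-connected of diameter at most 20 (any two vertices of $L(\mathcal{S})$ lie in some common triple). We then inspect the six pairs in $\mathcal{S}$ via Lemma~\ref{2colsBip}. If any pair $(A_i,A_j)$ gives the first alternative with colour $c_4$, take $H=K_n[L(A_i)\cup L(A_j),c_4]$ (diameter at most 10) and apply Lemma~\ref{dist3setLemma}(2) to a triple $\{A_i,A_j,A_k\}\subset\mathcal{S}$, obtaining bound 40. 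Otherwise we turn to $D'\in P\setminus\mathcal{S}$: a short pigeonhole on coordinates (using that $\mathcal{S}$ is 3-distant, so the coordinate values of elements of $\mathcal{S}$ differ pairwise by at least 3) shows every such $D'$ is 2-distant from at least two elements of $\mathcal{S}$. If some $D'$ admits two 2-distant $E,E'\in\mathcal{S}$ both giving the first alternative of Lemma~\ref{2colsBip} in $c_4$, then $L(E)\cup L(D')\cup L(E')$ is $c_4$-connected of diameter at most 20 and we again finish by Lemma~\ref{dist3setLemma}(2) applied to $\{E,E',A_k\}\subset\mathcal{S}$ with this $H$, yielding bound 40.

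The residual case---the only one not handled by Lemma~\ref{dist3setLemma}(2)---is: every $D'\in P\setminus\mathcal{S}$ has at most one 2-distant $E\in\mathcal{S}$ giving $c_4$-connectivity, and hence some further 2-distant $E'\in\mathcal{S}$ for which Lemma~\ref{2colsBip} returns either its first alternative in colour $c_3$ or its second alternative. In the first situation every vertex of $L(D')$ is at $c_3$-distance at most 10 from any vertex of $L(E')$; in the second situation, the complete-bipartite block structure still forces every vertex of $L(D')$ to be $c_3$-adjacent to some vertex of $L(E')\subseteq L(\mathcal{S})$. Combining with the $c_3$-diameter $\leq 20$ of $L(\mathcal{S})$, we conclude that the whole of $[n]$ is $c_3$-connected of diameter at most $10+20+10=40$, and Conjecture~\ref{bddConj} is satisfied by taking $[n]$ in colour $c_3$ together with two singleton sets. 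The main point requiring care is precisely this residual case: one must verify that the disconnected bipartite $c_3$-subgraph produced by the second alternative of Lemma~\ref{2colsBip} still affords $c_3$-distance $1$ from $L(D')$ to $L(E')$, which is what keeps the single $c_3$-cover viable; all other cases are short consequences of Lemma~\ref{dist3setLemma}(2) with $N_3\le 20$.
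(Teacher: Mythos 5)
Your Case I is correct, and so are the first two sub-cases of Case II (a pair in $\mathcal{S}$ yielding the first alternative of Lemma~\ref{2colsBip} in colour $c_4$, or a $D'$ with two 2-distant $E,E'\in\mathcal{S}$ both doing so); these are genuine, clean reductions to Lemma~\ref{dist3setLemma}(2). The gap is in the residual case. You assert that when Lemma~\ref{2colsBip} returns its second alternative for $L(D')\cup L(E')$, "the complete-bipartite block structure still forces every vertex of $L(D')$ to be $c_3$-adjacent to some vertex of $L(E')$." This does not hold in general. The second alternative only provides partitions $L(D')=X_1\cup X_2$, $L(E')=Y_1\cup Y_2$ with $X_1\times Y_1\cup X_2\times Y_2$ in one colour and $X_1\times Y_2\cup X_2\times Y_1$ in the other, and nothing forces all four parts to be non-empty. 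If $Y_1=\emptyset$ and the block $X_1\times Y_2$ happens to be the $c_4$-block, every vertex of $X_1$ has only $c_4$-edges to $L(E')$, so it is $c_3$-isolated from $L(E')$ inside this bipartite graph. Your single $c_3$-cover then misses those vertices, and you give no argument that some other 2-distant $E''\in\mathcal{S}$ rescues them — the residual hypothesis only says at most one such $E$ gives $c_4$-connectivity, it does not say the others give $c_3$-adjacency to every vertex of $L(D')$.

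The paper's proof avoids this trap by never applying the bipartite Lemma~\ref{2colsBip} to $(L(D'),L(E'))$. Instead, for each $E\in P$ it picks two elements $A'(E),B'(E)$ of the 3-distant quadruple at 2-distance from $E$ and invokes the complete tripartite Lemma~\ref{mult2col} on $L(A'(E))\cup L(B'(E))\cup L(E)$, which unconditionally returns a single colour $c(E)\in\{c_3,c_4\}$ making that tripartite union connected of diameter at most 20; there is no second alternative to reason about. It then partitions $P$ by $c(E)$ and by the pair $\{A'(E),B'(E)\}$ and finishes with a short case analysis on whether the $c_4$-pairs overlap. Your argument can be repaired the same way: in Case II, apply Lemma~\ref{mult2col} (not Lemma~\ref{2colsBip}) to $L(E)\cup L(E')\cup L(D')$ for two 2-distant $E,E'\in\mathcal{S}$. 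If the returned colour is $c_4$, then $G[L(E)\cup L(E'),c_4]$ sits inside the connected diameter-$\le 20$ subgraph $K_n[L(E)\cup L(E')\cup L(D'),c_4]$, and Lemma~\ref{dist3setLemma}(2) applied to any triple of $\mathcal{S}$ containing $E,E'$ finishes exactly as in your other sub-cases. If the returned colour is always $c_3$, then every $L(D')$ lies within $c_3$-distance 20 of $L(\mathcal{S})$, giving a global $c_3$-cover of bounded diameter with no appeal to block structure at all.
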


\begin{proof} Write $G = K_n$. Suppose that some $A, B, C, D \in P$ are 3-distant. All edges between $L(A) \cup L(B) \cup L(C) \cup L(D)$ are of colours $c_3$ and $c_4$ only, so by Lemma~\ref{mult2col} w.l.o.g. $G[L(A) \cup L(B) \cup L(C) \cup L(D),c_3]$ is connected and of diameter at most 60. Pick any $E \in P$. If $E$ has difference at most 1 in absolute value in some coordinate from at least three points among $A, B, C, D$, by pigeonhole princple, there are $A' , B'$ among these four and coordinate $i$ such that $|A'_i - E_i|, |B'_i - E_i| \leq 1$ so $|A'_i - B'_i| \leq 2$, which is impossible. Hence, $E$ is 2-distant from at least two points $A'(E), B'(E)$ among $A, B, C, D$. Hence, $A'(E), B'(E), E$ is a 2-distant set, so edges between $L(A'(E)), L(B'(E))$ and $L(E)$ are of colours $c_3$ and $c_4$ only. By Lemma~\ref{mult2col}, for some colour $c(E) \in \{c_3,c_4\}$ we have $G[L(A'(E)) \cup L(B'(E)) \cup L(E), c(E)]$ connected and of diameter at most 20. We split $P$ as follows: $P' \subset P$ is the set of all $E \in P$ such that $c(E) = c_3$, and for each pair $\pi$ of $A, B, C, D$ we define $P_\pi$ as the set of all $E \in P$ such that $\{A'(E), B'(E)\} = \pi$ and $c(E) = c_4$. We now look at the set of all pairs $\pi$ for which $P_\pi\not=\emptyset$.\\
\textbf{Case 1: there are $\pi_1, \pi_2$ such that $P_{\pi_1}$ and $P_{\pi_2}$ are non-empty and $\pi_1 \cap \pi_2 \not=\emptyset$.} W.l.o.g. $\pi_1 = \{A, B\}, \pi_2 = \{A,C\}$. For every $\pi = \{A', B'\}$ we already have $G[L(A') \cup L(B') \cup (\cup_{E \in P_\pi} L(E)), c_4]$ connected and of diameter at most 40. Hence, $G[L(A) \cup L(B) \cup L(C) \cup (\cup_{E \in P_{\pi_1} \cup P_{\pi_2}} L(E)), c_4]$ is also connected and of diameter at most 80. But, any other pair $\pi$ must intersect $A, B, C$, so we have
$$G[\cup_{\pi} \left((\cup_{F \in \pi} L(F)) \cup (\cup_{E \in P_{\pi}} L(E))\right), c_4]$$
connected and of diameter at most 160, where $\cup_\pi$ ranges over all pairs. Taking additionally
$$G[L(A) \cup L(B) \cup L(C) \cup L(D) \cup (\cup_{E \in P'} L(E)), c_3]$$
proves the claim.\\
\textbf{Case 2: all pairs $\pi$ such that $P_\pi \not= \emptyset$ are disjoint.} There are at most 2 such pairs. Thus, if we take 
$$G[(\cup_{F \in \pi} L(F)) \cup (\cup_{E \in P_{\pi}} L(E)), c_4]$$
for such pairs $\pi$ (these are connected and of diameter at most 40), and
$$G[L(A) \cup L(B) \cup L(C) \cup L(D) \cup (\cup_{E \in P'} L(E)), c_3],$$
the claim follows.
\end{proof}

\begin{lemma} \label{7distsize3}Suppose that $\chi$ is a 4-colouring of $E(K_n)$ and that $L$ is a $c_3, c_4$-layer mapping for some colours $c_3, c_4 \in [4]$ with a $7$-distant set of size at least 3. Suppose additionally that $\{A_i\colon A \in P\}$ takes at least 28 values for each $i=1,2$. Then $\chi$ satisfies Conjecture~\ref{bddConj} with constant 160. \end{lemma}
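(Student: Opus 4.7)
Let $\{A,B,C\}\subseteq P$ be the given $7$-distant triple. Applying Lemma~\ref{dist3setLemma}(1) to $\{A,B,C\}$, we may assume $G[L(A)\cup L(B)\cup L(C),c_3]$ is connected with $c_3$-diameter at most $20$. This is to be the first of the three covering sets. The plan is then to build, in the complementary colour $c_4$, a connected subgraph $H\subseteq G[c_4]$ of diameter at most $140$ containing $L(A')\cup L(B')$ for some pair $\{A',B'\}\subseteq\{A,B,C\}$; Lemma~\ref{dist3setLemma}(2) with $N_3=140$ then yields the bound $\max\{40,160\}=160$, as required.

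Define $V_i=\bigcup_{X\in\{A,B,C\}}[X_i-2,X_i+2]$ for $i=1,2$. By the $7$-distance, each $V_i$ is a disjoint union of three length-$5$ intervals, so $|V_i|=15$. If some $D\in P$ satisfies $D_1\notin V_1$ and $D_2\notin V_2$, then $\{A,B,C,D\}$ is $3$-distant of size $4$ and Lemma~\ref{3distLemma} immediately delivers the bound $160$. So assume $P\subseteq (V_1\times\mathbb{N}_0)\cup(\mathbb{N}_0\times V_2)$. The hypothesis $|\pi_1(P)|\geq 28>|V_1|$ supplies at least $13$ values of $\pi_1(P)$ outside $V_1$; for each such value $v$, any $D\in P$ with $D_1=v$ must have $D_2\in V_2$ and is therefore $3$-distant from the two members of $\{A,B,C\}$ whose second coordinate is $\geq 7-2=5$ away from $D_2$. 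Pigeonholing these $13$ points across the three length-$5$ subintervals of $V_2$ produces at least $\lceil 13/3\rceil=5$ such points $D^{(1)},\dots,D^{(5)}$ all with $D^{(j)}_2$ in a single $V_2(X)$, each $3$-distant from both members $\{Y,Z\}:=\{A,B,C\}\setminus\{X\}$. If any pair within this cluster is itself $3$-distant, then $\{Y,Z,D^{(j)},D^{(j')}\}$ is a $3$-distant quadruple and Lemma~\ref{3distLemma} finishes. Otherwise, since the $D^{(j)}$ have five distinct first coordinates, the $3$-distance must fail in the second coordinate for every pair, forcing the $D^{(j)}_2$ into a sub-interval of length $\leq 2$ inside $V_2(X)$; in particular each $D^{(j)}$ is then $2$-distant from both $Y$ and $Z$.

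Now Lemma~\ref{mult2col} applied to the tripartite graph $L(Y)\cup L(Z)\cup L(D^{(j)})$ assigns each $j$ a colour $c(D^{(j)})\in\{c_3,c_4\}$ for which the union is connected with diameter $\leq 20$. If some $c(D^{(j)})=c_4$, then $H:=G[L(Y)\cup L(Z)\cup L(D^{(j)}),c_4]$ is a $c_4$-connected subgraph of diameter $\leq 20$ containing $L(Y)\cup L(Z)$, and Lemma~\ref{dist3setLemma}(2) closes with constant $40$. If instead every $c(D^{(j)})=c_3$, each $L(D^{(j)})$ attaches to the base $c_3$-structure, and we run the entirely symmetric argument on $\pi_2(P)$ to obtain a column cluster $\widetilde D^{(1)},\dots,\widetilde D^{(5)}$ clinging to some $V_1(X')$. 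Either the column cluster yields the needed $c_4$-connected $H$ directly, or a short finite case analysis, split according to how $X$ and $X'$ relate to each other inside $\{A,B,C\}$ and using Lemma~\ref{2colsBip} on the bipartite layer-graphs $L(D)\cup L(D')$ to bridge gaps between $c_4$-pieces, produces an $H$ of diameter at most $140$ covering $L(A')\cup L(B')$ for a suitable pair.

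The main obstacle is the final case check. Each invocation of Lemmas~\ref{2colsBip} and~\ref{mult2col} contributes $10$, $20$, $40$ or $60$ to the $c_4$-diameter; juggling the $7$-slack and the $28$-values at each branch so that no configuration breaks the $140$-budget is where the bulk of the combinatorial work lies. The favourable case in which one tripartite application already returns $c_4$ is cheap, but the residual scenario in which every such application is forced into $c_3$ is what exhausts the budget, and bounding the cost there is the delicate point.
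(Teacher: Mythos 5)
Your opening two steps match the paper exactly: apply Lemma~\ref{dist3setLemma}(1) to the $7$-distant triple to get a $c_3$-connected base of diameter at most $20$, and observe that any $D$ with $D_1\notin V_1$ and $D_2\notin V_2$ would give a $3$-distant set of size $4$ and finish via Lemma~\ref{3distLemma}. After that, however, your plan diverges from the paper's, and the divergence introduces real gaps.

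First, the clustering step does not hold as stated. You pick five points $D^{(1)},\dots,D^{(5)}$ with distinct first coordinates, all with $D_2$ in a single component $V_2(X)$, and claim that if no pair among them is $3$-distant, then the five second coordinates must sit in an interval of length $\leq 2$. This does not follow: distinctness of the first coordinates does not prevent two of the $D^{(j)}$ from having first coordinates that differ by only $1$ or $2$, in which case their $3$-distance can fail in the first coordinate while their second coordinates are arbitrarily far apart (inside $V_2(X)$, so up to $4$ apart). One can patch this by sorting and comparing $D^{(j)}$ to $D^{(j+3)}$, but that only pins down part of the cluster; you would need a more careful choice of points (and possibly more of them, hence a larger threshold than $28$) to force the tight clustering you use.

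Second, and more fundamentally, the overall strategy has a conceptual gap. Your plan reduces the lemma to one of two outcomes: find a $3$-distant quadruple, or exhibit a connected subgraph $H\subseteq G[c_4]$ of diameter at most $140$ containing $L(A')\cup L(B')$ for some pair $\{A',B'\}\subseteq\{A,B,C\}$, and then invoke Lemma~\ref{dist3setLemma}(2). But this dichotomy is not shown to be exhaustive, and there is no reason it should be: nothing in the hypotheses guarantees that two of the base layers ever lie in a common low-diameter $c_4$-piece. Indeed the paper's own proof does not go through such an $H$. Instead it locates anchor points $X,Y\in P$ (each far from all of $A,B,C$ in one coordinate but necessarily close in the other), shows directly that most residual layers $L(D)$ are absorbed within $c_3$-distance $40$ of the base, isolates the three exceptional families $P_1,P_2,P_3$, handles each of those via Lemma~\ref{2colsBip} against $L(C)$, $L(Y)$ and $L(X)$ respectively, and then assembles an explicit $3$-set cover (the big $c_3$-ball $V$ plus up to two small $c_4$-pieces). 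That final assembly is precisely the ``delicate point'' you acknowledge but defer, and it is genuinely the bulk of the argument; as written your proposal does not get there, and the route you sketch toward it is not guaranteed to terminate.
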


\begin{proof} Let $\{A, B, C\}$ be a 7-distant set. Pick any other $D \in P$. If $D$ is 3-distant from each of $A, B, C$, we obtain a 3-distant set of size 4, so by Lemma~\ref{3distLemma} we are done. Hence, for every $D \in P$ we have $E \in \{A, B, C\}$ such that $|E_i - D_i| \leq 2$ for some $i$. (Note that this is the main contribution to the constant 160 in the statement.)\\
Since $\{A, B, C\}$ is a 7-distant set, by Lemma~\ref{dist3setLemma}, we have w.l.o.g. $G[L(A) \cup L(B) \cup L(C), c_3]$ connected and of diameter at most 20. We now derive some properties of $L(D)$ for points $D \in P$ be such that $|D_i - A_i|, |D_i - B_i|, |D_i - C_i| \geq 3$ for some $i \in \{1,2\}$. (Note that such points exist by assumptions.)\\
\indent Let $D$ be such a point and let $j$ be such that $\{i, j\} = \{1,2\}$. Since the set $\{A, B, C\}$ is $7$-distant, there are distinct $E_1, E_2 \in \{A, B, C\}$ such that $|D_j - (E_1)_j|, |D_j - (E_2)_j| \geq 3$. Thus, $\{D, E_1, E_2\}$ is also a 3-distant set. Applying Lemma~\ref{dist3setLemma} to $\{D, E_1, E_2\}$ implies that $G[L(D) \cup L(E_1) \cup L(E_2), c]$ is connected and of diameter at most 20, for some $c \in \{c_3, c_4\}$. However, if $c = c_4$, $G[L(E_1) \cup L(E_2), c_4]$ is contained in a subgraph of $G[c_4]$ that is connected and of diameter at most 20, so Lemma~\ref{dist3setLemma} (2) applies once again and the claim follows. Hence, we must have $G[L(D) \cup L(E_1) \cup L(E_2), c_3]$ is connected and of diameter at most 20. In particular, whenever $D \in P$ satisfies $|D_i - A_i|, |D_i - B_i|, |D_i - C_i| \geq 3$ for some $i \in \{1,2\}$, then every point in $L(D)$ is on $c_3$-distance at most 20 from $L(A) \cup L(B) \cup L(C)$.\\

By assumptions $\{A_1\colon A \in P\}$ takes at least 28 values. Hence, we can find $X \in P$ such that $|X_1 - A_1|, |X_1 - B_1|, |X_1 - C_1| \geq 5$. Similarly, there is $Y \in P$ such that $|Y_2 - A_2|, |Y_2 - B_2|, |Y_2 - C_2| \geq 5$. W.l.o.g $|X_2 - A_2| \leq 2$. If $|Y_1 - A_1| \leq 2$, then $X, Y, B, C$ form a 3-distant set of size 4, and once again the claim follows from Lemma~\ref{3distLemma}. Hence, w.l.o.g. $|Y_1 - B_1| \leq 2$. By the work above, we also have that every point in $L(X) \cup L(Y)$ is on $c_3$-distance at most 20 from $L(A) \cup L(B) \cup L(C)$. Note also that $X, Y$ are 3-distant.\\

It remains to analyse $D \in P$ such that for both $i = 1,2$ there is an $E \in \{A, B, C\}$ such that $|E_i - D_i| \leq 2$. We show that in all but one case on the choice of sets $E$, we in fact have $L(D)$ on bounded $c_3$-distance to $L(A) \cup L(B) \cup L(C)$. If we have an $E \in \{A, B, C\}$ such that both $|E_1 - D_1| \leq 2$ and $|E_2 - D_2| \leq 2$ hold, then taking $E', E''$ such that $\{E, E', E''\} = \{A, B, C\}$, we have $D, E', E''$ 3-distant, so Lemma~\ref{dist3setLemma} once again implies that every vertex in $L(D)$ is on $c_3$-distance at most 20 from $L(A) \cup L(B) \cup L(C)$ (or we are done by the second part of Lemma~\ref{dist3setLemma}).\\
We distinguish the following cases.
\begin{itemize}
\item If $|D_1 - A_1| \leq 2, |D_2 - B_2| \leq 2$, then $D, X, Y$ form a 3-distant set. Let us check this. We already have $X, Y$ 2-distant. By triangle inequality, we obtain $|X_1 - D_1| \geq |X_1 - A_1| - |A_1 - D_1| \geq 3$, $|Y_1 - D_1| \geq |B_1 - A_1| - |B_1 - Y_1| - |D_1 - A_1| \geq 3$, $|D_2 - X_2| \geq |B_2 - A_2| - |B_2 - D_2| - |X_2 - A_2| \geq 3$ and $|Y_2 - D_2| \geq |Y_2 - B_2| - |B_2 - D_2| \geq 3$.\\
We also know that $L(X) \cup L(Y)$ is contained in a subgraph $H \subset G[c_3]$ that is connected and of diameter at most 20, so applying Lemma~\ref{dist3setLemma} implies that we are done, unless $G[L(D) \cup L(X) \cup L(Y), c_3]$ is connected and of diameter at most 20. Hence $L(D)$ is on $c_3$-distance at most 40 from $L(A) \cup L(B) \cup L(C)$.
\item If $|D_1 - C_1| \leq 2, |D_2 - B_2| \leq 2$, then the same argument we had in the case above proves that $L(D)$ is on $c_3$-distance at most 40 from $L(A) \cup L(B) \cup L(C)$.
\item If $|D_1 - A_1| \leq 2, |D_2 - C_2| \leq 2$, then the same argument we had in the case above proves that $L(D)$ is on $c_3$-distance at most 40 from $L(A) \cup L(B) \cup L(C)$.
\end{itemize} 
Finally, we define $P_1, P_2, P_3 \subset P$ as
\begin{equation*}\begin{split}
P_1 &= \{D \in P\colon |D_1 - B_1|, |D_2 - A_2| \leq 2\}\\
P_2 &= \{D \in P\colon |D_1 - C_1|, |D_2 - A_2| \leq 2 \}\\
P_3 &= \{D \in P\colon |D_1 - B_1|, |D_2 - C_2| \leq 2 \}
\end{split}\end{equation*}
which are disjoint and if $D \in P \setminus (P_1 \cup P_2 \cup P_3)$ we know that $L(D)$ is on $c_3$-distance at most 40 from $L(A) \cup L(B) \cup L(C)$. Let also $L_i = \cup_{D \in P_i} L(D)$. Hence, since for $D \in P_1$ we have $|D_1 - C_1|, |D_2 - C_2| \geq 2$, all edges between $L(D)$ and $L(C)$ are coloured using $c_3$ and $c_4$, we actually have all edges between $L_1$ and $L(C)$ coloured using only these two colours. Applying Lemma~\ref{2colsBip} we have $G[L_1 \cup L(C), c]$ connected and of diameter at most 10 for some $c \in \{c_3, c_4\}$, or $L_1$ is on $c_3$-distance 1 from $L(A) \cup L(B) \cup L(C)$. Similarly, all edges between $L_2$ and $Y$, and all edges between $L_3$ and $X$ are taking only the colours $c_3$ and $c_4$. Observe that if $D \in P_2, D' \in P_3$ then $|D_1 - D'_1| \geq |C_1 - B_1| - |C_1 - D_1| - |D'_1 - B_1| \geq 3$. Similarly, $|D_2 - D'_2| \geq |A_2 - C_2| - |A_2 - D_2| - |D_2' - C_2|  \geq 3$, so all edges between $L_2$ and $L_3$ are only of colours $c_3$ and $c_4$. Apply Lemma~\ref{2colsBip} to $L_2$ and $L(Y)$, implying either $G[L_2 \cup L(Y), c_4]$ is connected and of diameter at most 10, or $L_2$ is on $c_3$-distance at most 30 from $L(A) \cup L(B) \cup L(C)$. Similarly, apply Lemma~\ref{2colsBip} to $L_3$ and $L(X)$, implying either $G[L_3 \cup L(X), c_4]$ is connected and of diameter at most 10, or $L_3$ is on $c_3$-distance at most 30 from $L(A) \cup L(B) \cup L(C)$. Finally, let $V = \{v \in [n] \colon d_{c_3}(v, L(A) \cup L(B) \cup L(C)) \leq 40\}$, which is $c_3$-connected and of $c_3$-diameter at most 100. We distinguish the following cases.
\begin{itemize}
\item $L_2, L_3 \subset V$. In this case, we can take $V$ and $G[L_1 \cup L(C), c]$ if necessary (otherwise $L_1 \subset V$).
\item $L_2 \not\subset V, L_3 \subset V$. Thus, $G[L_2 \cup L(Y), c_4]$ is connected and of diameter at most 10, so taking $G[L_2 \cup L(Y), c_4]$ and $V$, and additionally $G[L_1 \cup L(C), c]$ if necessary, we are done.  
\item $L_2 \subset V, L_3 \not\subset V$. Thus, $G[L_3 \cup L(X), c_4]$ is connected and of diameter at most 10, so taking $G[L_3 \cup L(X), c_4]$ and $V$, and additionally $G[L_1 \cup L(C), c]$ if necessary, we are done. 
\item $L_2, L_3 \not\subset V.$ In this case, we have $G[L_2 \cup L(Y), c_4]$ and $G[L_3 \cup L(X), c_4]$ connected and of diameter at most 10. Apply Lemma~\ref{2colsBip} to $L_2$ and $L_3$. If $L_2$ and $L_3$ are on $c_4$-distance at most 10, we may take $G[L_2 \cup L_3 \cup L(X) \cup L(Y), c_4]$, $V$ and $G[L_1 \cup L(C), c]$ if necessary. Otherwise, we have $G[L_2 \cup L_3, c_3]$ is connected and of diameter at most 10. In this case, take $G[L_2 \cup L_3, c_3], V$ and $G[L_1 \cup L(C), c]$ if necessary.
\end{itemize}
This completes the proof of the lemma.
\end{proof}

Let us now briefly discuss a way of defining $c_3, c_4$-layer mappings. Pick two colours $c_1, c_2 \in [4]$, and take $c_3, c_4$ to be the remaining two colours. List all the vertices as $v_1, v_2, \dots, v_n$. To each vertex, we shall assign two nonnegative integers, $D_1(v_i)$ and $D_2(v_i)$, initially marked as undefined. We apply the following procedure.
\begin{itemize}
\item[\textbf{Step 1}] Pick the smallest index $i$ such that $D_1(v_i)$ or $D_2(v_i)$ is undefined. If there is no such $i$, terminate the procedure.
\item[\textbf{Step 2}] For $j = 1,2$, if  $D_j(v_i)$ is undefined, pick an arbitrary value for it.
\item[\textbf{Step 3}] For $j = 1,2$, if  $D_j(v_i)$ was undefined before the second step, for all vertices $u$ in the same $c_j$-component of $v_i$ set $D_j(u) \colon= d_{c_j}(v_i, u) + D_j(v_i)$. Return to Step 1.    
\end{itemize}
Upon the completion of the procedure, set $P = \{(D_1(v), D_2(v))\colon v \in [n]\}$ and $L\colon P \to \mathcal{P}(n)$ as $L(x,y) \colon= \{v \in [n] \colon (D_1(v), D_2(v)) = (x,y)\}$.\\

\noindent\textbf{Claim.} The mapping $L$ above is well-defined and is a $c_3,c_4$-layer mapping.
\begin{proof}Observe that each time we pick $v_i$ whose value(s) are to be defined, we end up defining $D_1$ on one $c_1$-component or $D_2$ on one $c_2$-component or both. Hence, for every vertex $v$, the values $D_1(v), D_2(v)$ change precisely once from undefined to a nonnegative integer value. Hence, $(D_1(v), D_2(v))$ are well-defined and take values in $\mathbb{N}_0^2$, so $P$ and $L$ are well-defined and $L(A)$ forms a partition of $[n]$ as $A$ ranges over $P$. Finally, consider an edge $xy$ coloured by $c_1$. Let $D_1(x)$ be defined with $v_i$ chosen in Step 2 (possibly $x = v_i$). Since $xy$ is of colour $c_1$, these are in the same $c_1$-component, and hence $D_1(x) = d_{c_1}(v_i, x) + D_1(v_i)$ and $D_1(y) = d_{c_1}(v_i, y) + D_1(v_i)$. Therefore,
$$|D_1(x) - D_1(y)|  = |(d_{c_1}(v_i, x) + D_1(v_i)) - (d_{c_1}(v_i, y) + D_1(v_i))| = |d_{c_1}(v_i, x) - d_{c_1}(v_i, y)| \leq d_{c_1}(x,y) = 1$$
hence, if $\chi(xy) = c_1$, then $|D_1(x) - D_1(y)| \leq1$. Similarly, we get the corresponding statement for the colour $c_2$. It follows that if $A, B \in P$ are such that $|A_1 - B_1|, |A_2 - B_2| \geq 2$, then if $x \in L(A), y \in L(B)$, we have $(D_1(x), D_2(x)) = A, (D_1(y), D_2(y)) = B$, so $xy$ is coloured by $c_3$ or $c_4$, as desired.\end{proof}

\subsection{Monochromaticly connected case}

\begin{proposition}\label{singleComp}Suppose that $\chi$ is a 4-colouring of $E(K_n)$ such that every colour induces a connected subgraph of $K_n$. Then $\chi$ satisfies Conjecture~\ref{bddConj} with constant 160. \end{proposition}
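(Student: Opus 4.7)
\medskip
\noindent\textbf{Proof plan.} The plan is to reduce to Lemmas~\ref{3distLemma} and~\ref{7distsize3} via a suitable $c_3, c_4$-layer mapping. The starting observation is that, because every colour induces a connected spanning subgraph of $K_n$, any colour $c$ with $\diam K_n[c] \leq 160$ immediately furnishes the required cover by setting $A_1 = A_2 = A_3 = [n]$ and $c_1 = c_2 = c_3 = c$. Hence we may assume all four colours have diameter exceeding $160$.

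Now pick any two colours $c_1, c_2$ and let $\{c_3, c_4\} = [4] \setminus \{c_1, c_2\}$. Choose a base vertex $x_0$ that is an endpoint of a $c_1$-diameter-realising pair, so that its $c_1$-eccentricity equals $d_1 := \diam K_n[c_1] > 160$. Defining $D_j(v) := d_{c_j}(x_0, v)$ for $j = 1, 2$ yields a valid $c_3, c_4$-layer mapping $L(v) := (D_1(v), D_2(v))$, by the Claim immediately preceding this proposition. The layer index set $P$ satisfies $\pi_1(P) = \{0, 1, \dots, d_1\}$, so $|\pi_1(P)| \geq 161$; and since $c_2$ is connected, the $c_2$-eccentricity of $x_0$ is at least $d_2/2 > 80$, giving $|\pi_2(P)| \geq 81$. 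In particular both projections take at least $28$ values, so the auxiliary hypothesis of Lemma~\ref{7distsize3} is met, and it suffices to exhibit either a $3$-distant subset of $P$ of size $4$ (triggering Lemma~\ref{3distLemma}) or a $7$-distant subset of $P$ of size $3$ (triggering Lemma~\ref{7distsize3}).

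I would then split into two cases according to whether a ``corner'' point exists. If no vertex $v$ has $\min(D_1(v), D_2(v)) \geq 80$, then $[n] \subseteq B_{c_1}(x_0, 79) \cup B_{c_2}(x_0, 79)$; each ball is a monochromatic connected subgraph of diameter at most $158$ (any shortest $c_j$-path from a ball element to $x_0$ stays inside the ball), so these two sets together with a trivial singleton third set cover $[n]$ with the required properties. Otherwise, fix $v^* \in [n]$ with $D_1(v^*), D_2(v^*) \geq 80$; then $\{(0,0), L(v^*)\}$ is an $80$-distant pair in $P$, and the remaining task is to extend it to a $7$-distant triple.

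The main obstacle is precisely this extension step. If no third point of $P$ is simultaneously $7$-far from both $(0,0)$ and $L(v^*)$ in both coordinates, then every other point of $P$ must lie in one of four narrow bands defined by the coordinate lines through $(0,0)$ and $L(v^*)$, a strong combinatorial constraint. To close this case I would exploit the freedom to (i) choose the layer-mapping colour pair among the six possibilities, (ii) vary the base vertex (for instance by re-running the construction from $v^*$), and (iii) in the worst case extract directly from the band structure a cover by three small-diameter monochromatic balls in suitably chosen colours. The most delicate part is verifying that across these manoeuvres one always reaches either a usable distant set or a direct cover within the constant $C_4 = 160$; this is where I expect the main technical difficulty to lie.
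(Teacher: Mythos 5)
Your setup is sound: assuming all four colours have diameter exceeding $160$, building a $c_3,c_4$-layer mapping from distance functions $D_j(v) = d_{c_j}(x_0, v)$, observing that both projections of $P$ take many values, and handling the ``no corner'' case with the cover $B_{c_1}(x_0,79) \cup B_{c_2}(x_0,79)$ are all correct steps. However, your proof is incomplete at exactly the point you flag: you do not actually close the ``corner exists'' case, and the manoeuvres you list would not close it.

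The concrete obstacle is that when you find $v^*$ with $D_1(v^*), D_2(v^*) \geq 80$ and then suppose no third layer point is $7$-far from both $(0,0)$ and $L(v^*)$, the resulting ``narrow band'' condition $|D_1(z) - D_1(v^*)| \leq 6$ does \emph{not} bound $d_{c_1}(z, v^*)$: the triangle inequality only gives $d_{c_1}(z,v^*) \geq |D_1(z) - D_1(v^*)|$, not an upper bound, and $D_1(v^*)$ itself can be arbitrarily large. So the band around $D_1(v^*)$ gives no diameter control, and no direct cover falls out. The paper avoids this by a more delicate argument: it first establishes (the auxiliary Lemma inside the proof) that if one can find a pair $x', y'$ whose $c_1$-distance and $c_2$-distance are bounded both \emph{above and below} by suitable constants (e.g.\ $d_1(x',y') \in \{6,\dots,50\}$, $d_2(x',y') \in \{10,\dots,20\}$), then balls of bounded radius around $x'$ cover everything, giving a contradiction; the bounded \emph{upper} bounds on $d_1(x',y')$ and $d_2(x',y')$ are essential for this. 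It then shows such a pair exists by an additional combinatorial claim (finding $x,y$ with $d_1(x,y) \in \{25,26,27\}$ and $d_2(x,y) \geq 40$) and then marching along a shortest $c_2$-path from $x$ to $y$ in steps of $10$, using triangle-inequality bookkeeping to locate the desired pair among consecutive waypoints. That two-sided distance control on a carefully chosen pair of vertices — rather than on a single corner layer point — is the missing idea in your proposal.
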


\begin{proof} Suppose contrary, in particular every colour has diameter greater than 480. Our main goal in the proof is to find a pair of vertices $x',y'$ with a control over their 1-distance and 2-distance. We need both distances sufficiently large so that we can make a use of distant sets in $3,4$-layer mappings, and also bounded by a constant so that if a vertex is on small 1-distance from $x'$, it is also on small 1-distance from $y'$ and vice-versa.\\

More precisely,

\begin{lemma}\label{contrL}Suppose that there are vertices $x', y'$ such that $d_1(x', y') \in \{6,7, \dots, 50\}, d_2(x', y') \in \{10, 11, \dots, 20\}$. Then we obtain a contradiction.\end{lemma}

\begin{proof} Pick any point $z \not= x', y'$. Apply the procedure for defnining $3,4$-layer mapping starting from $x'$. If we obtain a 7-distant set of size at least 3, we obtain a contradiction with Lemma~\ref{7distsize3}. Hence, the distances corresponding to $x', y', z$ cannot give such a set, so we must have one of
$$d_1(x', z) \leq 6\text{ or }|d_1(x', y') - d_1(x', z)| \leq 6\text{ or }d_2(x', z) \leq 6\text{ or }|d_2(x', z) - d_2(x', y)| \leq 6.$$
In particular, we must have $d_1(x', z) \leq 56$ or $d_2(x', z) \leq 26$. Recalling the definition of monochromatic balls, $B_1(x, 56)$ and $B_2(x, 26)$ cover all the vertices, giving a contradiction.\end{proof}

\noindent\textbf{Claim.} There are $x,y$ such that $d_1(x,y) \in \{25,26,27\}$ and $d_2(x,y) \geq 40$.

\begin{proof}[Proof of the claim.] Suppose contrary, for every $x,y$ such that $d_1(x,y) \in \{25,26,27\}$, we must have $d_2(x,y) \leq 39$. Pick any $y_1, y_2 \in [n]$ such that $\chi(y_1y_2) = 1$. Since the 1-diameter is greater than 160, we can find $x \in [n]$ such that $d_1(x,y_1) = 26$. By triangle inequality, we also have $d_1(x,y_2) \in \{25,26,27\}$. Hence, $d_2(x,y_1), d_2(x,y_2) \leq 39$, from which we conclude that whenever an edge $y_1y_2$ is coloured by 1, then $d_2(y_1, y_2) \leq 78$. Hence, taking any $x \in [n]$ the balls
$$B_2(x, 78), B_3(x, 1), B_4(x, 1)$$
cover the vertex set. However, these have diameter less than 160, which is a contradiction.\end{proof}

Take $x,y$ given by the claim above. Since the subgraph $G[2]$ is connected, there is a minimal 2-path $x = z_0, z_1, \dots, z_{r}, z_{r+1} = y$ between $x$ and $y$, with $r \geq 39$. Look at the vertices $z_{10}, z_{20}, \dots, z_{10k}$ with $k$ such that $10 \leq r - 10k < 20$.\\
Consider $x, y, z_{10i}$ for some $1 \leq i \leq k$ and check whether we can define $3,4$-layer mapping so that these three points become a 7-distant set. Apply the procedure for defining $3,4$-layers mapping, starting from $x$, i.e. we want to see whether $(0,0), (d_1(x,y), d_2(x,y))$ and $(d_1(x, z_{10i}), d_2(x,z_{10i}))$ are 7-distant. If they are 7-distant, Lemma~\ref{7distsize3} gives us a contradiction.  Since 
\begin{equation*} \begin{split}
d_1(x,y) \geq 25, d_2(x,y) \geq 39\\
10 \leq d_2(x, z_{10i}) = 10i \leq 10k < d_2(x,y) - 6\\
\end{split}\end{equation*}
we must have either $d_1(x,z_{10i}) \leq 6$ or $|d_1(x, z_{10i}) - d_1(x,y)| \leq 6$ (implying $d_1(x, z_{10i}) \in \{19, 20, \dots, 33\}$). Similarly, if we start from $y$ instead of $x$ in our procedure, we see that either $d_1(y, z_{10i}) \leq 6$ or $|d_1(y, z_{10i}) - d_1(x,y)| \leq 6$ (implying $d_1(y, z_{10i}) \in \{19, 20, \dots, 33\}$) must hold.\\
Observe that for the vertex $z_{10}$ we must have $d_1(x, z_{10}) \leq 6$. Otherwise, we would have $19 \leq d_1(x, z_{10}) \leq 33$ and $d_2(x, z_{10}) = 10$, resulting in a contradiction by Lemma~\ref{contrL} (applied to the pair $x, z_{10}$). For every $z_{10i}$ we must have either the first inequality ($d_1(x,z_{10i}) \leq 6$) or the second ($19 \leq d_1(x,z_{10i}) \leq 33$), and we have that the first vertex among these, namely $z_{10}$, satisfies the first inequality. Suppose that there was an index $i$ such that $z_{10(i+1)}$ obeys the second inequality, and pick the smallest such $i$. Then, by the triangle inequality, we would have
$$13\leq d_1(z_{10(i+1)}, x) - d_1(x, z_{10i}) \leq d_1(z_{10i}, z_{10(i+1)}) \leq d_1(z_{10(i+1)}, x) + d_1(x, z_{10i}) \leq 39$$
and $d_2(z_{10i}, z_{10(i+1)}) = 10$, so Lemma~\ref{contrL} applies now to the pair $z_{10i}, z_{10(i+1)}$ and gives a contradiction. Hence, for all $i \leq k$ we must have the first inequality for $z_{10i}$. But then $z_{10k}$ and $y$ satisfy the conditions of Lemma~\ref{contrL}, giving the final contradiction, since $10 \leq d_2(y, z_{10k}) < 20$ and
$$19 \leq d_1(y, x) - d_1(x, z_{10k}) \leq d_1(y, z_{10k}) \leq d_1(y, x) + d_1(x, z_{10k}) \leq 33.$$
This completes the proof.\end{proof}

\subsection{Intersecting monochromatic components}

\begin{proposition}\label{intersectingcmps} Suppose that $\chi\colon E(K_n) \to  [4]$ be a 4-colouring with the property that, whenever $C$ and $C'$ are monochromatic components of different colours, and one of them has diameter at least 30 (in the relevant colour), then $C$ and $C'$ intersect. Then $\chi$ satisfies Conjecture~\ref{bddConj} with constant 160.
\end{proposition}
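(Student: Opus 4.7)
My plan is to combine the layer-mapping framework developed earlier with the intersecting-components hypothesis. First I would perform a reduction: if three of the four colours have every component of diameter at most $160$, then Lemma~\ref{3smalldiam} (applied with $N_1=160$) already supplies the required cover. So I may assume that at least two colours, say colours $3$ and $4$, contain components $C_3$ and $C_4$ of diameter exceeding $160$. Since $160>30$, the intersection hypothesis forces $C_3\cap C_4\ne\emptyset$, and I fix once and for all a basepoint $x_0\in C_3\cap C_4$.

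Next I would build a $c_3,c_4$-layer mapping $L\colon P\to\mathcal{P}([n])$ with $c_3,c_4$ the original colours $3$ and $4$, by running the procedure from the end of the previous subsection with $c_1=1$, $c_2=2$ starting from $x_0$. The intersection hypothesis is used when defining base values on each new colour-$1$ (respectively colour-$2$) component $D$ not containing $x_0$: the hypothesis applied to $D$ and the big component $C_3$ (respectively $C_4$) produces a vertex $\phi(D)\in D\cap C_3$, which I would use as the base vertex of $D$ with a canonical starting value. This anchors the layer structure to the ``hubs'' $C_3$ and $C_4$, and keeps the $D_1,D_2$ values controlled in terms of component diameters.

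I would then appeal to the distant-set lemmas. If $P$ contains a $3$-distant set of size at least $4$, Lemma~\ref{3distLemma} immediately gives the bound $160$; if $P$ contains a $7$-distant set of size $3$ whose two coordinate projections each take at least $28$ values, Lemma~\ref{7distsize3} does the same. The remaining case is that $P$ admits none of these distant subsets, which forces $P$ to be confined to a region of bounded spread and to split into only a constant number of nearly-colinear clusters of layer indices.

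The main obstacle will be this last ``compressed'' case. Here I would exploit the hypothesis directly, using the fact that $C_3$ and $C_4$ meet every component of every other colour and that the layer anchoring above keeps the residual structure tight. The goal is to cover $[n]$ by three monochromatic balls of radius $80$: namely $B_3(x_0,80)$, $B_4(x_0,80)$, and one further ball $B_c(u,80)$ in a colour $c\in\{1,2\}$ chosen to absorb the vertices missed by the first two balls (which are precisely those $v$ with $d_3(x_0,v)>80$ and $d_4(x_0,v)>80$, forcing the edge $vx_0$ to take colour $1$ or $2$). Verifying that a single such third ball suffices will require a subcase analysis based on how the colour-$1$ and colour-$2$ components of $x_0$ distribute among the layers of $L$, and on whether either of colours $1,2$ also has a big component intersecting $C_3$ and $C_4$; this combinatorial bookkeeping is the delicate part of the argument.
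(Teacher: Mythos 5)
Your reduction to two large-diameter components and the observation that the hypothesis forces $C_3 \cap C_4 \neq \emptyset$ are both fine, but the rest of the plan has a genuine gap at exactly the point you flag as ``delicate.'' The difficulty starts with your choice of coordinate colours: you run the layer-mapping procedure with $c_1 = 1$, $c_2 = 2$, i.e.\ with the small-diameter colours supplying the coordinates $D_1, D_2$. Within each monochromatic $1$- or $2$-component the coordinate values are then bounded, and whatever ``canonical starting value'' you assign via the anchoring vertex $\phi(D) \in D \cap C_3$, you have not described a mechanism that forces the layer indices to spread out. So there is no reason to expect a $3$-distant set of size $4$ or a $7$-distant set of size $3$, which makes your ``compressed'' case the generic one. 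Moreover, your description of that case is simply wrong: the absence of a $3$-distant size-$4$ set does \emph{not} confine $P$ to a region of bounded spread (take $P \subseteq \{0,1,2\} \times \mathbb{N}_0$, which has no $3$-distant pair at all but is unbounded). Most seriously, you give no argument that $B_3(x_0,80) \cup B_4(x_0,80) \cup B_c(u,80)$ covers $[n]$; the vertices missed by the first two balls need only have $d_3(x_0,\cdot), d_4(x_0,\cdot) > 80$, and nothing in your set-up places all of them inside a single bounded-radius colour-$1$ or colour-$2$ ball.

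The paper's argument is organised quite differently and closes precisely the holes above. It invokes Proposition~\ref{singleComp} (absent from your plan) to obtain a colour $c'$ with at least two components, and takes one big-diameter colour $c_1 \neq c'$; the \emph{coordinate} colours of the layer mapping are then $c_1$ and $c'$, exactly the colours in which spread is available. The intersection hypothesis is used at one sharp point: a big $c_1$-component $C_1$ cannot lie inside a single $c'$-component, since it would then be disjoint from the other $c'$-component while having $c_1$-diameter $> 30$. This yields a pair $x,y \in C_1$ in different $c'$-components with $10 \le d_{c_1}(x,y) \le 40$. Running the procedure on a list beginning $x,y,z,\dots$ with $z \notin B_{c_1}(x,50)$ then produces a $7$-distant triple unless $z \in B_{c'}(x,6) \cup B_{c'}(y,6)$; hence $B_{c_1}(x,50), B_{c'}(x,6), B_{c'}(y,6)$ already cover $[n]$. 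To repair your proposal you would need to flip which pair of colours defines the layer coordinates, bring in a colour with more than one component, and supply the covering argument you left as ``combinatorial bookkeeping.''
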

\begin{proof} Suppose contrary, we have a colouring $\chi$ that satisfies the assumptions but for which the conclusion fails. By Lemma~\ref{3smalldiam}, we have that at least two colours have monochromatic diameters greater than 160. Let $C_1$ be such a component for colour $c_1$, and let $C_2$ be such a component for colour $c_2$, with $c_1 \not= c_2$. Further, by the Proposition~\ref{singleComp} we have a colour $c'$ (which might equal one of $c_1, c_2$) with at least two components, w.l.o.g. $c_1 \not= c'$.\\
First, we find a pair of vertices $x,y$ with the property that $10 \leq d_{c_1}(x,y) \leq 40$ and $x, y$ are in different $c'$-components. We do this as follows. If there are a couple of vertices $x_1, x_2$ with $d_{c_1}(x_1, x_2) < 10$ that are in different $c'$-components, then, since $c_1$-diameter of $C_1$ is large, we can find $y \in C_1$ with $d_{c_1}(x_1, y) = 25$. Hence, $15 \leq d_{c_1}(x_2, y) \leq 35$, and $y$ is in different $c'$-component from one of $x_1, x_2$, yielding the desired pair. Otherwise, we have that all pairs of vertices $x,y \in C_1$ with $d_{c_1}(x,y) \leq 30$ also share the same $c'$-component. But then, we must have the whole $c_1$-component $C_1$ contained in one $c'$-component, making it unable to intersect other $c'$-components, which is impossible. Hence, we have $x, y$ in different $c'$-components, with $10 \leq d_{c_1}(x,y) \leq 40$.\\
Pick any vertex $z$ outside $B_{c_1}(x, 50)$. Let $c'', c'''$ be the two colours different from $c_1, c'$. We now apply our procedure for defining $c'', c'''$-layers mapping with vertices listed as $x, y, z, \dots$. Note that $|D_1(x) - D_1(y)|, |D_1(x) - D_1(z)|, |D_1(y) - D_1(z)| \geq 10$ (recall the $D_1, D_2$ notation from the procedure). Hence, we get a 7-distant set, unless $d_{c'}(x,z) \leq 6$ or $d_{c'}(y,z) \leq 6$. Hence, $B_{c_1}(x, 50)$, $B_{c'}(x, 6)$ and $B_{c'}(y, 6)$ cover the vertex set and we get a contradiction.
\end{proof}

\subsection{Final steps}
In the final part of the proof, we show how to reduce the general case to the case of intersecting monochromatic components.

\begin{theorem}Conjecture~\ref{bddConj} holds for 4 colours and we may take 160 for the diameter bounds.\end{theorem}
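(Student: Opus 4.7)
I would combine the three tools already developed: Lemma~\ref{3smalldiam} (which eliminates colourings with at most one colour of large diameter), Proposition~\ref{singleComp} (which handles $4$-colourings where every colour induces a connected subgraph), and Proposition~\ref{intersectingcmps} (which handles colourings where every pair of differently-coloured monochromatic components, at least one of diameter $\geq 30$, intersects).

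Concretely, the first step is to invoke Lemma~\ref{3smalldiam} to reduce to colourings in which at least two colours, call them $c_1$ and $c_2$, have monochromatic diameter exceeding $160$. If every colour induces a connected subgraph of $K_n$, Proposition~\ref{singleComp} concludes immediately. Otherwise some colour is disconnected. If in addition the hypothesis of Proposition~\ref{intersectingcmps} holds (i.e.\ every pair of differently-coloured components with at least one of diameter $\geq 30$ meets), that proposition concludes. So the only outstanding case is that some colour is disconnected \emph{and} there exist disjoint monochromatic components $C$ (of some colour $a$) and $C'$ (of a distinct colour $b$), with at least one of them, say $C$, having diameter $\geq 30$.

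To treat this final case, I would mirror the layer-mapping argument of Proposition~\ref{intersectingcmps}. The new feature here is that for any $x\in C$ and $y\in C'$, the disjointness $C\cap C'=\emptyset$ together with the fact that $C, C'$ are full monochromatic components means that $x$ and $y$ already lie in different monochromatic components for \emph{both} colours $a$ and $b$. Hence, in a layer mapping built by the procedure described before Section~4.1 with defining colours $a$ and $b$, we are free to place the layer positions of $x$ and $y$ arbitrarily far apart in both coordinates by appropriate offset choices. Mirroring Proposition~\ref{intersectingcmps}, for any third vertex $z$ that lies outside a constant-radius $a$- or $b$-ball around $x$ or $y$, the layer positions of $\{x,y,z\}$ form a $7$-distant triple, so Lemma~\ref{7distsize3} yields Conjecture~\ref{bddConj} with constant $160$. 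Otherwise $[n]$ is covered by a bounded family of monochromatic balls around $x$ and $y$, and the three-set cover of diameter at most $160$ follows directly.

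The principal obstacle will be verifying the $28$-values hypothesis of Lemma~\ref{7distsize3} when the colours $a, b$ of the disjoint components are not both among $\{c_1,c_2\}$. I anticipate resolving this by a short case analysis on $|\{a,b\}\cap\{c_1,c_2\}|\in\{0,1,2\}$: when the intersection is full, the large diameters of $c_1, c_2$ yield the needed variety directly; when it is partial or empty, I would switch the defining colours of the layer mapping to include at least one of $c_1, c_2$, trading a slightly less convenient layer structure for the guaranteed variety of distances. A bad sub-case might instead be handled via Lemma~\ref{3distLemma} by producing a fourth vertex in a further component to make a $3$-distant set of size $4$.
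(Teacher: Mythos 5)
Your high-level decomposition — eliminate the easy cases by Lemma~\ref{3smalldiam} and Proposition~\ref{singleComp}, reduce to the hypothesis of Proposition~\ref{intersectingcmps}, and handle the remaining case of disjoint monochromatic components $C$ (colour $a$) and $C'$ (colour $b$) by a layer-mapping argument — matches the paper's structure. The paper actually only needs to verify the hypothesis of Proposition~\ref{intersectingcmps} (the other two results are already invoked inside it), but that is a minor stylistic difference.

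The genuine gap is in your treatment of the disjoint-components case. You take $x \in C$, $y \in C'$, build an $a,b$-layer mapping with $x$ and $y$ placed far apart, and look for a third vertex $z$. The set of $z$ that fail to give a $7$-distant triple is $B_a(x,6) \cup B_a(y,6) \cup B_b(x,6) \cup B_b(y,6)$, and in the bad case this covers $[n]$. But these are \emph{four} balls lying in four distinct monochromatic components: $B_a(x,6)\subseteq C$ and $B_a(y,6)$ lies in a different $a$-component (since $y\notin C$), while $B_b(y,6)\subseteq C'$ and $B_b(x,6)$ lies in a different $b$-component (since $x\notin C'$). There is no general way to merge these into three monochromatically connected sets, so the claim that ``the three-set cover of diameter at most $160$ follows directly'' is unsupported; this is precisely where the construction has to be smarter, not a formality. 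Notice also that your argument never uses the hypothesis $\diam_a C\geq 30$, which is a warning sign.

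The paper avoids this by choosing both $x$ and $y$ inside $C$ (with $d_a(x,y)\geq 7$) and a single $z\in C'$. Because $C\cap C'=\emptyset$, the vertex $z$ lies in a different $a$-component \emph{and} a different $b$-component from each of $x,y$, so its layer position can be placed arbitrarily far from both in both coordinates; the only obstruction to a $7$-distant triple is then $d_b(x,y)\leq 6$. This converts the argument into a structural statement about $C$: every pair $x,y\in C$ with $d_a(x,y)\geq 7$ has $d_b(x,y)\leq 6$, and then the hypothesis $\diam_a C\geq 30$ is used to upgrade this to $\diam_b C\leq 12$ (route through a midpoint far from both). Finally one covers $[n]$ by \emph{three} balls around a single $v\in C$, namely $B_b(v,12)$, $B_{c''}(v,1)$, $B_{c'''}(v,1)$: no $a$-ball is needed because any $a$-edge from $v$ stays in $C$, which is already inside $B_b(v,12)$. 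If you want to keep your framing, replace ``$x\in C$, $y\in C'$'' by ``$x,y\in C$, $z\in C'$'' and rebuild the ball cover around one vertex of $C$; the rest of your plan then goes through and recovers the paper's argument.

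Your worry about the $28$-values hypothesis of Lemma~\ref{7distsize3} is a fair one to flag, and the suggestion to fall back on Lemma~\ref{3distLemma} via a fourth distant vertex is the right instinct, but as written it remains a sketch rather than a resolution.
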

\begin{proof} Let $\chi$ be the given $4$-colouring of $E(K_n)$. Our goal is to apply the Proposition~\ref{intersectingcmps}. We start with an observation.

\begin{observation}\label{disjcmps} Suppose that $C$ is a $c$-component, that is disjoint from a $c'$-component $C'$ with $c' \not=c$. Then for every pair of vertices $x,y \in C$ we have $d_c(x,y) \leq 6$ or $d_{c'}(x,y) \leq 6$ or the colouring satisfies Conjecture~\ref{bddConj} with the constant 160. \end{observation}

\begin{proof}[Proof of the Observation~\ref{disjcmps}] Pick $x,y \in C$ with $d_c(x,y) \geq 7$ and take arbitrary $z \in C'$. Apply our procedure for generating $c_3, c_4$-layers mapping to the list $x, y, z, \dots$, with $c_3, c_4$ chosen to be the two colours different from $c, c'$. Since $z$ is in different $c$- and $c'$-components from $x, y$, these three vertices result in a 7-distant set, unless $d_{c'}(x,y) \leq 6$, as desired.\end{proof}

\begin{corollary} Suppose that we have a $c$-component $C$, that is disjoint from a $c'$-component $C'$ with $c' \not=c$ and has $c$-diameter at least 30. Then the colouring $\chi$ satisfies Conjecture~\ref{bddConj} with the constant 160.\end{corollary}

\begin{proof} By the Observation~\ref{disjcmps} we are either done, or any two vertices $x,y \in C$ with $d_c(x,y) > 6$ satisfy $d_{c'}(x,y) \leq 6$. Furthermore, given any two vertices $x,y \in C$, since the $c$-diameter of $C$ is at least 30, we can find $z \in C$ such that $d_{c}(x,z), d_c(y,z) \geq 7$, so by triangle inequality $d_{c'}(x,y) \leq 12$ holds for all $x,y \in C$.\\
Now, take an arbitrary vertex $v \in C$, let $c'', c'''$ be the two remaining colours, and consider the sets
$$B_{c'}(v, 12), B_{c''}(v,1), B_{c'''}(v,1).$$
Given any $u \in [n]$, if $vu$ is coloured by any of $c', c''$ or $c'''$, it is already in the sets above. On the other hand, if $uv$ is of colour $c$, then $v \in C$ so $d_{c'}(u,v) \leq 10$, thus $u \in B^{(c')}(v, 10)$. Thus, these sets cover the vertex sets and have monochromatic diameters at most 24, so we are done.\end{proof}

Finally, we are in the position to apply the Proposition~\ref{intersectingcmps} which finishes the proof of the theorem.\end{proof}

\section{Concluding remarks}

Apart from the main conjectures~\ref{connConj} (and its equivalent~\ref{altConj}) and~\ref{bddConj}, here we pose further questions. Recall the section 2 that contains the auxiliary results. There we first discussed Lemmas~\ref{2colsBip} and~\ref{mult2col}, which were variants of the main conjectures with different underlying graph instead of $K_n$. Recall that Lovasz-Ryser conjecture is also about different underlying graphs. Another natural question would be the following.

\begin{question} Let $G$ be a graph, and let $k$ be fixed. Suppose that $\chi\colon E(G)\to[k]$ is a $k$-colouring of the edges of $G$. For which $G$ is it possible to find $k-1$ monochromaticly connected sets that cover the vertices of $G$? What bounds on their diameter can we take?\end{question}

Observe already that for 3 colours, the situation becomes much more complicated than that for 2 colours, where complete multipartite graphs behaved well. Consider the following example.\\

Pick $n+6$ vertices labelled as $v_1, v_2, \dots, v_6$ and $u_1, u_2, \dots, u_n$. Define the graph $G$ to be the complete graph on these vertices with 3 edges $v_1 v_2, v_3v_4$ and $v_5v_6$ removed. Define the colouring $\chi\colon E(G) \to[3]$ as follows.
\begin{itemize}
\item Edges of colour 1 are $v_1v_3, v_3v_5, v_1v_5, v_4v_6$ and $v_1u_i, v_3u_i, v_5u_i$ for all $i$.
\item Edges of colour 2 are $v_2v_4, v_2v_5, v_4v_5, v_1v_6$ and $v_2u_i, v_4u_i$ for all $i$.
\item Edges of colour 3 are $v_2v_3, v_2v_6, v_3v_6, v_1v_4$ and $v_6u_i$ for all $i$.
\item Edges of the form $u_i u_j$ are coloured arbitrarily.
\end{itemize}
\begin{figure}
\centering
\includegraphics[width=0.8\textwidth]{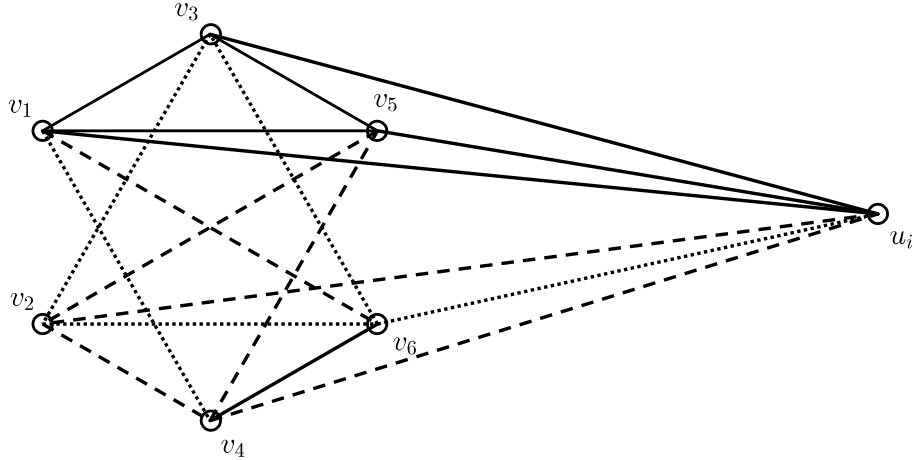}
\caption{An example of 3-colouring of $K_n$ with a matching of size 3 removed that cannot be covered by two monochromatic components.}\end{figure}
It is easy to check that this colouring has no covering of vertices by two monochromatic components. Is this essentially the only way the conjecture might fail for such a graph?

\begin{question}Let $G = K_n \setminus \{e_1, e_2, e_3\}$ be the complete graph with a mathching of size three omitted. Suppose that $\chi\colon E(G) \to [3]$ is a 3-colouring of the edges such that no two monochromatic components cover $G$. Is such a colouring isomorphic to an example similar to the one above? What about $K_{2n}$ with a perfect matching removed?\end{question}

Finally, recall that the one of the main contributions in the final bound in Theorem~\ref{mainthmbdd} came from Lemma~\ref{g3path} and that in general the Ramsey approach of Lemma~\ref{genSparse} would give much worse value. It would be interesting to study the right bounds for this problem as well.

\begin{question}For fixed $l$, what is the maximal size of a set of vertices $S$ of $G_l$ such that $G_l[S]$ is a path? What about other families of graphs of bounded degree? In particular, for fixed $l$ and $d$, what is the maximal size of a set of vertices $S$ of $G_l$ such that $G_l[S]$ is a connected graph of degrees bounded by $d$?\end{question}

\subsection{Acknowledgements}
I would like to thank Trinity College and the Department of Pure Mathematics and Mathematical Statistics of Cambridge University for their generous support. I am particularly indebted to Andr\'as Gy\'arf\'as and Imre Leader for the helpful discussions concerning this paper.

\end{document}